\newcommand{\urltilde}{\kern -.15em\lower .7ex\hbox{~}\kern .04em}  
\newcommand{\Mor}{\mathop{\mathrm{Mor}}\nolimits}
\newcommand{\sgn}{\mathop{\mathrm{sgn}}\nolimits}
\newcommand{\proj}{\mathop{\mathrm{proj}}\nolimits}
\newcommand{\Ind}{\mathop{\mathrm{Ind}}\nolimits}
\newcommand{\sign}{\mathop{\mathrm{sign}}\nolimits}
\newcommand{\im}{\mathop{\mathrm{Im}}\nolimits}
\newcommand{\Res}{\mathop{\mathrm{Res}}\nolimits}
\newcommand{\Ob}{\mathop{\mathrm{Ob}}\nolimits}
\newcommand{\aff}{\mathop{\mathrm{aff}}\nolimits} 
\newcommand{\Mod}{\mathop{\mathrm{Mod}}\nolimits} 
\newcommand{\Hom}{\mathop{\mathrm{Hom}}\nolimits}
\newcommand{\End}{\mathop{\mathrm{End}}\nolimits}
\newcommand{\eqnref}[1]{~(\ref{#1})}
\newcommand{\germ}[1]{\mathrm{#1}}
\newif\if@borderstar
   \def\bordermatrix{\@ifnextchar*{%
       \@borderstartrue\@bordermatrix@i}{\@borderstarfalse\@bordermatrix@i*}%
   }
   \def\@bordermatrix@i*{\@ifnextchar[{\@bordermatrix@ii}{\@bordermatrix@ii[()]}}
   \def\@bordermatrix@ii[#1]#2{%
   \begingroup
     \m@th\@tempdima8.75\p@\setbox\z@\vbox{%
       \def\cr{\crcr\noalign{\kern 2\p@\global\let\cr\endline }}%
       \ialign {$##$\hfil\kern 2\p@\kern\@tempdima & \thinspace %
       \hfil $##$\hfil && \quad\hfil $##$\hfil\crcr\omit\strut %
       \hfil\crcr\noalign{\kern -\baselineskip}#2\crcr\omit %
       \strut\cr}}%
     \setbox\tw@\vbox{\unvcopy\z@\global\setbox\@ne\lastbox}%
     \setbox\tw@\hbox{\unhbox\@ne\unskip\global\setbox\@ne\lastbox}%
     \setbox\tw@\hbox{%
       $\kern\wd\@ne\kern -\@tempdima\left\@firstoftwo#1%
         \if@borderstar\kern2pt\else\kern -\wd\@ne\fi%
       \global\setbox\@ne\vbox{\box\@ne\if@borderstar\else\kern 2\p@\fi}%
       \vcenter{\if@borderstar\else\kern -\ht\@ne\fi%
         \unvbox\z@\kern-\if@borderstar2\fi\baselineskip}%
         \if@borderstar\kern-2\@tempdima\kern2\p@\else\,\fi\right\@secondoftwo#1 $%
     }\null \;\vbox{\kern\ht\@ne\box\tw@}%
   \endgroup
   }
\newtheorem{theorem}{Theorem}[section] 
\newtheorem{proposition}[theorem]{Proposition}
\newtheorem{lemma}[theorem]{Lemma}
\newtheorem{definition}[theorem]{Definition}
\newtheorem{remark}[theorem]{Remark} 
\newtheorem{corollary}[theorem]{Corollary}
\begin{document}

\title[Categorification of Verma and indecomposable projective modules for $\mathfrak{sl}_2$]{Categorification of Verma modules and indecomposable projective modules in the category $\mathcal I_{\mathfrak{g}}(\mathfrak{sl}_2)$ for $\mathfrak{sl}_2$}  
\author{ 
Ben Cox \and 
Mee Seong Im 
 }  
\address{Department of Mathematics, University of Charleston, Charleston, SC 29424 USA}
\email{coxbl@cofc.edu}
\address{Department of Mathematical Sciences, United States Military Academy, West Point, NY 10996 USA}
\email{meeseongim@gmail.com}
\keywords{Categorification of Verma and indecomposable projective modules, tensor product of categories, quotient categories, affine Hecke algebras, diagrammatic algebras}
\date{\today}

\begin{abstract}  
We categorify Verma and indecomposable projective modules in the category $\mathcal I_{\mathfrak{g}}(\mathfrak{sl}_2)$ for $\mathfrak{sl}_2$ using a tensor product decomposition theorem of T. J. Enright and work of J. Chuang and R. Rouquier \cite{MR2373155}, A. Licata and A. Savage \cite{MR3032820} and  M. Khovanov \cite{khovanov2010heisenberg}. 
 
\end{abstract}   
\thanks{Both authors thank the Department of Mathematics at the University of California at Santa Cruz for providing conducive working environment in May-June of 2015. 
MSI would like to thank the College of Charleston for their hospitality in September of 2015 and BC would like to thank the United States Military Academy for their hospitality in October of 2015. 
BC is partially supported by Simons Collaboration Grant $\#319261$ and MSI is supported by NSF-AWM Mentoring Grant.} 

\subjclass[2000]{Primary 17B45, 16W25, 20C08, 15A69, 	33D80}

\maketitle 
\bibliographystyle{amsalpha}   
\setcounter{tocdepth}{2}

\section{Introduction}\label{section:introduction}  

We answer Rapha\"el Rouquier's question of categorifying Verma modules in the setting of $\mathfrak{sl}_2(\mathbb{C})$. That is, we obtain \textit{ $\mathcal{I}_{\mathfrak{g}}(\mathfrak{sl}_2)$-categorifications} $\mathcal V_s$ and $\mathcal T_r$ of Verma modules $V_s$ for $s$ an integer and their indecomposable projective covers $T_r$, where $r$ is a nonnegative integer. We define our notion of an $\mathcal{I}_{\mathfrak{g}}(\mathfrak{sl}_2)$-categorification below.  Verma modules play a fundamental role in the representation theory of Kac-Moody Lie algebras and in the geometry of infinite partial flag variety. Verma modules also correspond to $\delta$-function distributions on orbits under the action by the Borel and to certain holomorphic functions on these orbits (cf. \cite{MR2838836}, \cite{MR2357361}, \cite{MR797420}, \cite{MR1260747}, \cite{MR1260748}), so the categorification of these modules play a fundamental role in the theory of categorification in the Lie algebra setting. 

Naisse and Vaz used techniques of \cite{MR2373155}, \cite{MR2776785}, and \cite{MR2305608} to produce a geometric approach using infinite Grassmannians and infinite partial flag varieties to categorify Verma modules for $U_q(\mathfrak{sl}_2)$ 
(cf. \cite{Naisse-Vaz-Verma}).

The motivation of our construction goes as follows: a classical result in the representation theory of $\mathfrak{sl}_2$ states that any Verma module $V_\lambda$ of highest weight $\lambda\in\mathbb C$ can be realized in terms of operators $x,\partial_x$ of the Heisenberg algebra acting on the polynomial ring $\mathbb C[x]$ with one indeterminate $x$. For this reason, the Heisenberg algebra is also realized as a Weyl algebra, or a ring of differential operators. 
Now let $ H_{\mathbb Z}$ be the infinite-rank Heisenberg algebra defined over nonnegative integers $\mathbb N$ with generators $1$ and $p_i,q_i$, where $i\in \mathbb N$, together with relations $p_i q_j =q_j p_i + \delta_{ij}$.  M. Khovanov in \cite{khovanov2010heisenberg} constructs a conjectured categorification $\mathcal H$ of the infinite-rank Heisenberg algebra $H_{\mathbb Z}$ and shows the existence of a map $\rho$ from $\mathcal H$ to the category $\text{End}(\oplus_{n\in\mathbb N}\mathbb C[S_n]$-$\text{mod})$ of functors on the direct sum of categories of $\mathbb C[S_n]$-modules (more recently, a proof of the surjectivity of the ring homomorphism from $H_{\mathbb{Z}}$ to $K_{0}(\mathcal{H})$ has been announced in \cite{categorificationWWYY}).  $\mathcal H$ is the Karoubi envelope of the category $\mathcal H'$ defined as a strict monoidal category generated by two objects $Q_+$ and $Q_-$ and morphisms between tensor products of such objects are given by planar diagrams mod out by certain relations.  The category of endofunctors can be viewed as a type of categorification of a Fock space.  Under the appropriate assignment of functors:  
\begin{center}
$\rho(F)=Q_+$, $\hspace{4mm}$ $\rho(E)= - Q_{+} (Q_{-})^2  $, 
\end{center} 
which correspond to $\rho(f)=x$ and 
$\rho(e)=-x\partial_x^2$, we arrive at the weak $\mathcal{I}_{\mathfrak{g}}(\mathfrak{sl}_2)$-categorification (cf. Definition~\ref{definition:modified-weak-categ}) of the highest weight Verma module with highest weight $0$.   We denote this categorification by $\mathcal V_0$.  
The objects of $\mathcal{V}_0$ are sums of the functors obtained from  
$$
\Lambda^n_{+} := (Q_{+^n},e'(n)), \hspace{2mm}
S_{-}^n := (Q_{-^n}, e(n)), \hspace{2mm} 
e(n)=\frac{1}{n!}\sum_{\sigma\in S_n}\sigma,\hspace{2mm}  
e'(n)=\frac{1}{n!}\sum_{\sigma\in S_n}\sign(\sigma)\sigma \in \mathbb k [S_n], 
$$
and the morphisms (which are viewed as natural transformations) are certain planar diagrams modulo relations given by Khovanov.  
In the $U_q(\mathfrak{sl}_2)$-geometric categorification construction in \cite{Naisse-Vaz-Verma}, $F$ is only a left adjoint of $E$  (they are not biadjoints). Similarly in our construction, $F$ is only a left adjoint of $E$.  

For a complex Lie algebra $\mathfrak g$ containing $\mathfrak{sl}_2$, T. J. Enright defines the categories $\mathcal I(\mathfrak{sl}_2)$ and $\mathcal{I}_{\mathfrak{g}}(\mathfrak{sl}_2)$ as follows: 
\begin{definition}[\cite{MR541329}, Definition 3.2]\label{defn:category-I-sl2}
We define $\mathcal{I}(\mathfrak{sl}_2)$ to be the category of $\mathfrak{sl}_2$-modules such that the following properties hold:  
\begin{enumerate} 
\item $M$ is a weight module for $\mathfrak{sl}_2(\mathbb C)$, with $M=\oplus_{\mu\in\mathbb C}M_\mu$ where $M_\mu=\{m\in M : h.m=\mu m\}$, 
\item $f$ acts injectively on $M$, 
\item $e$ acts locally nilpotently on $M$.  
\end{enumerate}
\end{definition}
\begin{definition}[\cite{MR541329}, page 8]\label{defn:I-g-sl2-category}
For any Lie algebra $\mathfrak{g}$ over $\mathbb{C}$ containing $\mathfrak{sl}_2$, we define $\mathcal I_{\mathfrak g}(\mathfrak{sl}_2)$ as the category of $\mathfrak{g}$-modules whose underlying $\mathfrak{sl}_2$-module lies in $\mathcal{I}(\mathfrak{sl}_2)$. 
\end{definition} 

\begin{theorem}
Let $M$ be an $\mathfrak{sl}_2$-module in $\mathcal{I}_{\mathfrak{g}}(\mathfrak{sl}_2)$. Then 
\begin{enumerate} 
\item if $M$ is indecomposable, then $M\cong V_{\lambda}$ for some $\lambda \in \mathbb{C}$ or $M\cong T_{\lambda}$ for some $\lambda \in \mathbb{N}_{\geq 0}$,    
\item $M$ is an (infinite) direct sum of indecomposable modules. 
\end{enumerate}
\end{theorem}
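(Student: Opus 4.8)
The plan is to decompose $M$ according to the action of the Casimir, reducing to a single ``block'', and there to reduce the problem to an elementary normal form.

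\emph{Step 1: reduction to one block.} Any weight vector $m\in M$ generates $U(\mathfrak{sl}_2)m=\mathbb{C}[f]\cdot\langle m,em,e^2m,\dots\rangle$, which by local nilpotence of $e$ is finitely generated with weights bounded above, hence an object of category $\mathcal{O}$; there the Casimir $\Omega$ acts locally finitely. Therefore $M=\bigoplus_{c}M[c]$, the sum of the generalized $\Omega$-eigenspaces, and each $M[c]$ is again in $\mathcal{I}_{\mathfrak{g}}(\mathfrak{sl}_2)$ since the category is closed under submodules. It suffices to decompose one $M[c]$; its linkage class is a pair $\{\lambda,-\lambda-2\}$. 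If this pair avoids $\mathbb{N}_{\geq 0}$ (equivalently $\lambda=-1$ or $\lambda\notin\mathbb{Z}$) call the block \emph{semisimple}; otherwise let $r\in\mathbb{N}_{\geq 0}$ be the element of $\{\lambda,-\lambda-2\}\cap\mathbb{N}_{\geq 0}$ and write $\mathcal{B}_{r}$ for the block.

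\emph{Step 2: semisimple blocks.} Every composition factor of $M[c]$ is then a simple Verma $V_{\lambda}$ or $V_{-\lambda-2}$; in particular a nonzero vector killed by $e$ can only have weight $\lambda$ or $-\lambda-2$, and (in each residue class mod $2\mathbb{Z}$) $M[c]$ has no weight above that one. For a highest weight $\lambda$ and $0\neq v\in(\ker e)_{\lambda}$, injectivity of $f$ makes $v,fv,f^{2}v,\dots$ linearly independent, so $U(\mathfrak{sl}_2)v\cong V_{\lambda}$; a short induction on weights, again using injectivity of $f$ (the same argument that a $\mathbb{Z}$-graded torsion-free $\mathbb{C}[f]$-module with bounded-above weights is free), shows that the sum of these submodules over a basis of $\ker e$ is direct and exhausts $M[c]$. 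Hence $M[c]$ is a direct sum of simple Verma modules.

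\emph{Step 3: the block $\mathcal{B}_{r}$, and conclusion.} Here all weights of $M[c]$ are $\leq r$, and $M_{r+2}=0$, so $M_{r}\subseteq\ker e$; thus $M^{+}:=U(\mathfrak{sl}_2)\cdot M_{r}\cong\bigoplus_{I}V_{r}$ is a submodule, $I$ indexing a basis of $M_{r}$. The quotient $\bar M=M[c]/M^{+}$ lies in $\mathcal{B}_{r}$ with zero weight-$r$ space; each of its finitely generated submodules is in $\mathcal{O}$, hence of finite length with composition factors among $L_{r}$ (the $(r+1)$-dimensional simple) and $V_{-r-2}$, and having no weight-$r$ vector it can have no factor $L_{r}$, so it is a sum of copies of $V_{-r-2}$ (a self-extension of the simple Verma $V_{-r-2}$ splits, as weight $-r$ does not occur in it). Thus $\bar M\cong\bigoplus_{J}V_{-r-2}$ and $M[c]$ is an extension $0\to\bigoplus_{I}V_{r}\to M[c]\to\bigoplus_{J}V_{-r-2}\to 0$, classified by an element of $\Ext^{1}\bigl(\bigoplus_{J}V_{-r-2},\bigoplus_{I}V_{r}\bigr)$; and $\Ext^{1}(V_{-r-2},V_{r})\cong\mathbb{C}$, its non-split class represented by $T_{r}$ (concretely, a lift $\tilde v$ of the highest weight vector of $V_{-r-2}$ satisfies $e\tilde v=c\,f^{r}v^{+}$ with $c\neq 0$; this $T_{r}$ is the indecomposable projective cover of $V_{-r-2}$, in accordance with the notation of the theorem). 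Since $\End(V_{r})=\End(V_{-r-2})=\mathbb{C}$, acting by $\Aut(M^{+})\times\Aut(\bar M)$ amounts to row and column operations on the (column-finite) matrix of this extension class, which can be brought to the block form $\left(\begin{smallmatrix}\mathrm{Id}_{K}&0\\0&0\end{smallmatrix}\right)$; a block-diagonal class is a direct sum of extensions, so $M[c]\cong T_{r}^{\oplus K}\oplus V_{r}^{\oplus(I\setminus K)}\oplus V_{-r-2}^{\oplus(J\setminus K)}$. Assembling Steps 1--3 proves (2), and (1) follows since an indecomposable $M$ lies in a single block and hence, by the above, equals a single $V_{\lambda}$ ($\lambda\in\mathbb{C}$) or a single $T_{r}$ ($r\in\mathbb{N}_{\geq 0}$). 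The $\mathbb{C}[f]$-freeness inductions and the matrix normal form are routine; the real obstacle is the structural claim in Step 3 that $M[c]/M^{+}$ is a sum of copies of $V_{-r-2}$ --- i.e.\ ruling out stray $e$-primitive vectors and composition factors of the wrong central character --- which is exactly where injectivity of $f$, local nilpotence of $e$, and boundedness of weights within a block must be combined. (Alternatively, one may invoke Enright's theorem \cite{MR541329} directly and record only the $\mathfrak{sl}_2$ specialization.)
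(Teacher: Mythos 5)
Your argument is correct in outline, but it is not the route the paper takes: the paper gives no proof of this theorem at all — it records the statement and attributes the classification of indecomposables to Enright (\cite{MR541329}, Proposition 3.11(i)), just as it later quotes Enright's Casimir-decomposition theorem verbatim. Your Step 1 is precisely that quoted theorem (local finiteness of $\Omega$ obtained from finitely generated submodules lying in category $\mathcal{O}$), and your Steps 2--3 replace Enright's general machinery by a hands-on $\mathfrak{sl}_2$ analysis: semisimple blocks are direct sums of simple Verma modules, and an integral block is an extension of $\bigoplus_J V_{-r-2}$ by $\bigoplus_I V_r$ whose class is a column-finite matrix, normalized by $\Aut\bigl(\bigoplus_I V_r\bigr)\times\Aut\bigl(\bigoplus_J V_{-r-2}\bigr)$ to a partial identity, which yields $T_r^{\oplus K}\oplus V_r^{\oplus\cdot}\oplus V_{-r-2}^{\oplus\cdot}$. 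What the citation buys is brevity and Enright's greater generality; what your proof buys is a self-contained argument that displays exactly where each axiom of $\mathcal{I}(\mathfrak{sl}_2)$ (semisimple $h$, injective $f$, locally nilpotent $e$) enters, and why $T_r$ is forced as the unique nonsplit extension of $V_{-r-2}$ by $V_r$. Two places deserve tightening if you keep the self-contained version: in Step 2 the exhaustion of $M[c]$ by $\mathbb{C}[f]\cdot\ker e$ does not follow from $f$-injectivity alone — the downward induction closes because $e^kf^k$ acts on a highest weight vector of weight $\nu$ by the scalar $k!\prod_{j=0}^{k-1}(\nu-j)$, nonzero precisely since $\nu\notin\mathbb{N}_{\geq 0}$ in a semisimple block (equivalently, all extensions among the simple Vermas of that block split); and in Step 3 you should note explicitly that $\End(V_r)=\End(V_{-r-2})=\mathbb{C}$ and that any map from a single Verma into a direct sum has finite support, so the automorphism groups really are the full general linear groups of the multiplicity spaces and the normal form $\left(\begin{smallmatrix}\mathrm{Id}&0\\0&0\end{smallmatrix}\right)$ is obtained by choosing complements of the kernel and image of the classifying map.
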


\begin{remark}[\cite{MR541329}, Proposition 3.11.(i)]
The Verma module $V_{\lambda}$ ($\lambda \in \mathbb{C}$) and the projective covers $T_{\lambda}$ ($\lambda \geq 0$ an integer) are precisely all the indecomposable objects of the category $\mathcal{I}(\mathfrak{sl}_2)$. 
\end{remark}

 Let $L_n$ denote the irreducible highest weight module for $\mathfrak{sl}_2$ of highest weight $n$ of dimension $n+1$ and let $V_{\lambda}$ be the Verma module with highest weight $\lambda$, where $\lambda \in\mathbb C$.  \color{black} In the annals paper by Enright (\cite{MR541329}, Proposition 3.12), the author gives a tensor product decomposition for $L_n\otimes V_{\lambda}$ in $\mathcal I_{\mathfrak{g}}(\mathfrak{sl}_2)$:  
\begin{equation}\label{Enrightsresult} 
L_n\otimes V_\lambda\cong \bigoplus_{r\in I'}T_{\lambda+r}\oplus\bigoplus_{s\in I'''}V_{\lambda+s}, 
\end{equation}
where $I'$ and $I'''$ are explicitly described finite sets of integers (see Proposition~\ref{prop:direct-sum-decomp-Enright}) and $T_{\lambda+r}$ are indecomposable projective covers of Verma modules $V_{\lambda+r}$.  
We categorify a version of this decomposition to construct our categorification of Verma modules for $\mathfrak{sl}_2$.

The final piece of motivation comes from the work of J. Chuang and R. Rouquier on the $\mathfrak{sl}_2$-categorification $\mathcal L_n$ of the modules $L_n$.  These categories have endofunctors $E$ and $F$ defined on them that decategorify to the action of $e$ and $f$ on $L_n$. Moreover there are natural transformations $X=\oplus_i X_i\in \text{End}(F)$ and $T=\oplus_i T_i\in \text{End}(F^2)$ such that $X_i$ and $T_i$ satisfy functorial versions of the relations for the affine Hecke algebra. 

Putting the above together, we define the category $\mathcal{L}_n\boxtimes \mathcal{V}_0$ in such a manner so that its objects are of the form $\displaystyle{\oplus_{k,l}} M_k\boxtimes_{} N_l$, where $M_k$ is an object of $\mathcal{L}_n$ and $N_l$ is an object of $\mathcal{V}_0$, and morphisms are of the form $\sum_{k,l} f_k\otimes g_l$, where $f_k\in \Hom_{\mathcal{L}_n}(M_k,M_k')$ and $g_l\in \Hom_{\mathcal{V}_0}(N_l,N_l')$.    We then define the categories $\mathcal T_r$ and $\mathcal V_s$, where $r\in\mathbb N$  and $s\in \mathbb Z$, as certain subcategories of $\mathcal L_n\boxtimes \mathcal{V}_0$.  As a consequence there are functors $E$ and $F$ defined on $\mathcal L_n\boxtimes \mathcal{V}_0$ that leave the subcategories $\mathcal T_r$ and $\mathcal V_s$ stable.  We prove that these subcategories are categorifications of the modules $T_r$ and $V_s$, respectively, whereby the functors $E$ and $F$ decategorify to the action of $e$ and $f$ on $\mathfrak{sl}_2$-modules $T_r$ and $V_s.$
Categories $\mathcal{V}_s$ and $\mathcal{T}_r$ have as objects sums of submodules generated by applications of $E$ and $F$. 

We now state one of the main results in this paper: 

\begin{theorem}\label{theorem:weak-sl2-categorification} 
Let $I=I'\cup I''\cup I'''$ be as above. 
    For $n\geq 0$  and $\lambda\in\mathbb Z$, we have a direct sum decomposition of categories
    \begin{align*}
 \bigoplus_{r\in I'} \mathcal T_{r}\oplus\bigoplus_{s\in I'''}  \mathcal V_{s},    
\end{align*}
which is a full subcategory of $\mathcal L_n\boxtimes \mathcal V_0 $,
where $\mathcal L_n\boxtimes \mathcal  V_0$ is the category whose objects are sums of tensor products of objects of the two categories $\mathcal L_n$ and $\mathcal V_0$ and whose morphism likewise are sums of tensor products of morphisms.
In particular, we obtain $\mathcal{I}_{\mathfrak{g}}(\mathfrak{sl}_2)$-categorifications $\mathcal V_s$ and $\mathcal T_r$ of Verma modules $V_s$ and their projective covers $T_r$, respectively, where $s\in I'''$ and $r\in I'$.
\end{theorem}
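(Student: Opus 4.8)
The plan is to realize the claimed decomposition inside the external tensor product category $\mathcal L_n\boxtimes\mathcal V_0$ and to categorify Enright's isomorphism \eqref{Enrightsresult}. First I would make $\mathcal L_n\boxtimes\mathcal V_0$ precise: its objects are finite direct sums $\bigoplus_{k,l}M_k\boxtimes N_l$ with $M_k\in\mathcal L_n$ and $N_l\in\mathcal V_0$, and $\Hom$-spaces are the corresponding direct sums of tensor products $\Hom_{\mathcal L_n}(M_k,M_k')\otimes\Hom_{\mathcal V_0}(N_l,N_l')$. On it I put $E=(E_{\mathcal L_n}\boxtimes\id)\oplus(\id\boxtimes E_{\mathcal V_0})$ and $F=(F_{\mathcal L_n}\boxtimes\id)\oplus(\id\boxtimes F_{\mathcal V_0})$, the categorical avatars of the coproduct maps $e\otimes 1+1\otimes e$ and $f\otimes 1+1\otimes f$. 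The natural transformations $X\in\End(F)$ and $T\in\End(F^2)$ are assembled from the affine Hecke datum of Chuang--Rouquier on $\mathcal L_n$ and the diagrammatic (Khovanov) datum on $\mathcal V_0$, and one checks that $(\mathcal L_n\boxtimes\mathcal V_0,E,F,X,T)$ satisfies the axioms of Definition~\ref{definition:modified-weak-categ}, with $F$ only a left adjoint of $E$ because that is already so in each tensor factor. Finally $K_0(\mathcal L_n\boxtimes\mathcal V_0)\cong K_0(\mathcal L_n)\otimes_{\mathbb Z}K_0(\mathcal V_0)\cong L_n\otimes V_0$ as $\mathfrak{sl}_2$-modules, with $[E],[F]$ acting as $e,f$; the $\mathfrak{sl}_2$-relations on the $V_0$-factor are precisely those coming from the Heisenberg realization $\rho(e)=-x\partial_x^2$, $\rho(f)=x$ built into $\mathcal V_0$.

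\emph{Categorifying the decomposition.} By \propref{prop:direct-sum-decomp-Enright} there is an $\mathfrak{sl}_2$-isomorphism $L_n\otimes V_0\cong\bigoplus_{r\in I'}T_{r}\oplus\bigoplus_{s\in I'''}V_{s}$. I would lift each summand to a subcategory: for each $s\in I'''$ pick an object $G_s\in\mathcal L_n\boxtimes\mathcal V_0$ whose $K_0$-class is the highest-weight generator of $V_s$, and for each $r\in I'$ pick $P_r$ whose class is the indecomposable projective generator of $T_r$; then let $\mathcal V_s$ (resp.\ $\mathcal T_r$) be the full subcategory of $\mathcal L_n\boxtimes\mathcal V_0$ whose objects are finite sums of objects obtained from $G_s$ (resp.\ $P_r$) by repeated application of $E$ and $F$. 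Each $\mathcal T_r,\mathcal V_s$ is then $E$- and $F$-stable by construction, and the $K_0$-computation above gives $K_0(\mathcal V_s)\cong V_s$ and $K_0(\mathcal T_r)\cong T_r$. The real content is to prove these subcategories are pairwise $\Hom$-orthogonal inside $\mathcal L_n\boxtimes\mathcal V_0$ — a category need not split just because its $K_0$ does: I expect to separate summands of distinct highest weight using the internal grading, and to rule out the residual morphisms with the nil/affine-Hecke relations together with the projectivity of the $\mathcal T_r$-generators. Granting orthogonality, the additive closure $\bigoplus_{r\in I'}\mathcal T_r\oplus\bigoplus_{s\in I'''}\mathcal V_s$ is an internal direct sum of full subcategories, hence a full subcategory of $\mathcal L_n\boxtimes\mathcal V_0$, which is the assertion.

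\emph{The $\mathcal I_{\mathfrak g}(\mathfrak{sl}_2)$-categorification axioms, and the main obstacle.} For each piece one then checks the three conditions of Definition~\ref{defn:category-I-sl2} and their categorical enhancements: the internal grading gives the weight-module structure; faithfulness of $F$ (equivalently injectivity of $[F]$ on $K_0$) gives injectivity of $f$; and the local nilpotency carried by the weak-categorification datum gives local nilpotency of $e$. This, with the previous step, exhibits $\mathcal V_s$ and $\mathcal T_r$ as $\mathcal I_{\mathfrak g}(\mathfrak{sl}_2)$-categorifications of $V_s$ and $T_r$. The main obstacle is the $\Hom$-orthogonality in the middle step: Enright's proof of \eqref{Enrightsresult} is module-theoretic, exploiting highest-weight vectors and the tension between injectivity of $f$ and local nilpotency of $e$, and lifting it to a genuine orthogonal splitting of $\mathcal L_n\boxtimes\mathcal V_0$ must be carried out with only a one-sided adjunction between $E$ and $F$, so the usual biadjoint Chuang--Rouquier machinery is unavailable and the argument has to go through Khovanov's planar calculus and the affine Hecke action explicitly.
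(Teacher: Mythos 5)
Your overall architecture is the paper's: put the coproduct functors $E,F$ and the Hecke data $X,T$ on $\mathcal L_n\boxtimes\mathcal V_0$, and realize each summand of Enright's decomposition as the full subcategory generated under $E$ and $F$ from a distinguished object. The genuine gap is at the step ``pick an object whose $K_0$-class is the highest-weight generator of $V_s$ (resp.\ the generator of $T_r$)''. An object of the additive category $\mathcal L_n\boxtimes\mathcal V_0$ is a finite sum $\boxplus_{i,k}(M_i\boxtimes N_k)^{\boxplus c_{ik}}$ with \emph{nonnegative integer} multiplicities, so such a lift exists only if the vectors $u_s$ and $a_{-r-2}$ can be written in the basis $v_i\otimes w_k$ of $L_n\otimes V_0$ with nonnegative integer coefficients. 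That is precisely the paper's key technical content, and it is not automatic: Proposition~\ref{proposition:p-j-for-Verma-mod-cat} solves the recursion for the coefficients and chooses the initial value $\alpha_{0,(n+r+2)/2}$ so that all $p_i$ are positive integers, and the companion proposition for $T_r$ produces $a_{-r-2}$ with integer coefficients $q_j$ satisfying~\eqref{equation:recursion-equation-basis-coeff-betas} and then, since these need not be positive, replaces $a_{-r-2}$ by $a_{-r-2}+m\,u_{-r-2}$ for $m\gg 0$ (which preserves $(\Omega-c)^2$-annihilation and $(\Omega-c)$-nonannihilation, hence still generates $T_r$) to force positivity. The remark after Corollary~\ref{corollary:basis-Ts} shows positivity can genuinely fail in nearby situations ($u_0=v_0\otimes v_1-v_1\otimes v_0$ inside $L_1\otimes L_1$), so your proposal is missing the lemma on which the whole lifting rests.

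A second, smaller issue: you declare Hom-orthogonality of the subcategories to be ``the real content'' and then only say you expect to prove it via the grading and the Hecke relations; as written that step is not carried out, and the biadjointness-free setting you yourself flag makes the sketch uncertain. The paper does not take that route: the direct sum statement is obtained by constructing $\mathcal V_s$ and $\mathcal T_r$ from the explicit objects $U_s$ and $A_{-r-2}$ (whose existence is the positivity result above) and verifying the decategorification on split Grothendieck groups, as in Theorems~\ref{theorem:sl2-verma-module-categorification} and~\ref{theorem:sl2-projective-module-categorification}, with $[F]\mapsto f=x$ and $[-E]\mapsto -e=-x\partial_x^2$. Note also that the Hecke relations for $X,T$ on the tensor product are checked only after passing to a quotient of $\mathcal L_n\boxtimes\mathcal V_0$ by a congruence relation, a wrinkle your construction of the $\mathcal I_{\mathfrak g}(\mathfrak{sl}_2)$-structure does not account for.
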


  Here the tensor product category $\mathcal L_n\boxtimes \mathcal  V_0$  is made into an $\mathcal{I}_{\mathfrak{g}}(\mathfrak{sl}_2)$-categorification (see Section~\ref{section:weak-categorifications}) using comultiplication of 
  affine Hecke algebra, compatible with the actions of $E$ and $F$, given as a Lie algebra action. We also construct natural transformations 
 $X\in \End(F)$ and $T\in \End(F^2)$ acting on the tensor product of categories $\mathcal{L}_n$ and $\mathcal{V}_0$, compatible with the (nondegenerate and degenerate) affine Hecke algebra structure.

For $s\in I'$, $r=-s-2$, and $0\leq i\leq (n+r)/2$, define 
\begin{equation}
p_i = 4^{\frac{n+r}{2} - i } \dfrac{n+r+2}{n+r-2i+2} \prod_{j=0}^{i-1} (n+r-2j)^2 \prod_{\nu=i}^{\frac{n+r-2}{2} } (n-\nu),  
\end{equation}
where if $i=0$, we define 
\[ 
\prod_{j=0}^{i-1} (n+r-2j)^2 := 1. 
\] 
Let $v_j\in L_n$, a finite dimensional $\mathfrak{sl}_2$-irreducible representation with highest weight $n$. 
Then 
\[ 
v_j = \dfrac{1}{j!} f^j v_0, \mbox{ where } 0 \leq j\leq n  
\] 
satisfying: 
\begin{equation}
f.v_i = (i+1)v_{i+1}, \hspace{4mm} 
e.v_i = (n-i+1)v_{i-1}, \hspace{4mm} \mbox{ and } \hspace{4mm}    
h.v_i = (n-2i)v_i.  
\end{equation}
Now, we make an identification $V_0 \cong \mathbb{C}[x]$ of the $\mathfrak{sl}_2$-Verma module $V_0$ with highest weight $0$ with the polynomial ring $\mathbb{C}[x]$, let $w_k:= x^k \in \mathbb{C}[x]$. 
Then 
\begin{equation}
f.w_k = w_{k+1}, \hspace{4mm}
e.w_k = -k(k-1)w_{k-1}, \hspace{4mm}  \mbox{ and } \hspace{4mm}  
h.w_k = -2kw_k.  
\end{equation} 
Consider the highest weight vector $u_n:= v_0 \otimes w_0 \in L_n \boxtimes V_0$ of highest weight $n$. 
We restate the second part of Theorem~\ref{theorem:weak-sl2-categorification} as follows: 
\begin{theorem}\label{theorem:sl2-verma-module-categorification} 
Let $s\in \mathbb{Z}$.  
Let $K_{\oplus}( \mathcal{V}_s)$ be the split Grothendieck group of the category of sums of submodules generated by applications of $E$ and $F$.  
Then 
\begin{equation} 
\mathbb{C} \otimes_{\mathbb{Z}} K_{\oplus}(\mathcal{V}_s) \stackrel{\simeq}{\longrightarrow} V_s 
\hspace{4mm}
\mbox{ sending }
\hspace{4mm} 
 [U_s]\mapsto u_s,  
\end{equation}  
where 
\[ 
U_s = \boxplus_{j=0}^{\frac{n-s-2}{2} } (M_j \boxtimes N_{\frac{n-s}{2} - j})^{\boxplus p_j} 
\hspace{4mm}
\mbox{ and } 
\hspace{4mm}
u_s = \sum_{j=0}^{\frac{n-s-2}{2} }p_j v_j\otimes w_{\frac{n-s}{2} -j},   
\]  
which is a weight vector of weight $s$. 
Furthermore, if 
$s$ is a nonnegative integer, then 
\[ 
\mathbb{C} \otimes_{\mathbb{Z}} K_{\oplus}(\mathcal{V}_s) \stackrel{\simeq}{\longrightarrow} V_s 
\hspace{4mm}
\mbox{ also sends } 
\hspace{4mm}
[U_{-s-2}] \mapsto u_{-s-2},   
\]   
where  
\[ 
U_{-s-2} = \boxplus_{j=0}^{\frac{n+s}{2}} (M_j \boxtimes N_{\frac{n+s+2}{2}-j})^{\boxplus p_j}
\hspace{4mm}
\mbox{ and } 
\hspace{4mm}
u_{-s-2} = f^{s+1}u_s = \sum_{j=0}^{\frac{n+s}{2}} p_j v_j \otimes w_{\frac{n+s+2}{2}-j} ,  
\]  
with weight $-s-2$.
The isomorphism of split Grothendieck groups also respects a map of functors: 
\[ 
[F:=Q_+\otimes -]\mapsto f=x 
\hspace{4mm}
\mbox{ and }
\hspace{4mm}
 [-E:=Q_+Q_-Q_-\otimes -]\mapsto -e=-x\partial_x^2. 
\]  
\end{theorem}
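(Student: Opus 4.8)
The plan is to reduce the statement to the behaviour of split Grothendieck groups under the external product $\boxtimes$, to the two input categorifications, and to Enright's decomposition \eqref{Enrightsresult} with $\lambda=0$. First I would note that, since every object of $\mathcal L_n\boxtimes\mathcal V_0$ is by construction a finite sum of the indecomposables $M_j\boxtimes N_k$ and its morphism spaces are the corresponding sums of tensor products of morphism spaces in $\mathcal L_n$ and $\mathcal V_0$, there is a natural isomorphism $K_{\oplus}(\mathcal L_n\boxtimes\mathcal V_0)\cong K_{\oplus}(\mathcal L_n)\otimes_{\mathbb Z}K_{\oplus}(\mathcal V_0)$. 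Tensoring with $\mathbb C$ and inserting the Chuang--Rouquier categorification $\mathbb C\otimes_{\mathbb Z}K_{\oplus}(\mathcal L_n)\cong L_n$, $[M_j]\mapsto v_j$, together with the Khovanov--Heisenberg categorification $\mathbb C\otimes_{\mathbb Z}K_{\oplus}(\mathcal V_0)\cong V_0\cong\mathbb C[x]$, $[N_k]\mapsto w_k=x^k$, recalled in the introduction, I obtain $\mathbb C\otimes_{\mathbb Z}K_{\oplus}(\mathcal L_n\boxtimes\mathcal V_0)\cong L_n\otimes V_0$ with $[M_j\boxtimes N_k]\mapsto v_j\otimes w_k$. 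The correspondence of functors $[F]\mapsto f=x$ and $[-E]\mapsto -e=-x\partial_x^2$ holds on $\mathcal V_0$ by the assignment $\rho$ recalled in the introduction; since $\mathcal L_n\boxtimes\mathcal V_0$ is given its $\mathcal I_{\mathfrak g}(\mathfrak{sl}_2)$-categorification structure through the Lie-algebra coproduct (Section~\ref{section:weak-categorifications}), this correspondence extends to $\mathcal L_n\boxtimes\mathcal V_0$ and restricts to each $\mathcal V_s$.

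Next I would pin down the relevant Verma summand. Put $m=\tfrac{n-s}{2}$ and set $u_s=\sum_{j=0}^{m-1}p_j\,v_j\otimes w_{m-j}$. From the explicit actions $f.v_i=(i+1)v_{i+1}$, $e.v_i=(n-i+1)v_{i-1}$, $h.v_i=(n-2i)v_i$ on $L_n$ and $f.w_k=w_{k+1}$, $e.w_k=-k(k-1)w_{k-1}$, $h.w_k=-2kw_k$ on $\mathbb C[x]$, one reads off $h.u_s=(n-2m)u_s=s\,u_s$, and, using the coproduct action $e\mapsto e\otimes1+1\otimes e$, the equation $e.u_s=0$ is equivalent, by collecting the coefficient of $v_i\otimes w_{m-i-1}$, to the two-term recursion $p_{i+1}(n-i)=p_i(m-i)(m-i-1)$ for $0\le i\le m-2$ (the two boundary contributions vanish because $v_{-1}=0$ and $(m-i)(m-i-1)=0$ at $i=m-1$). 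Solving this recursion from a fixed $p_0$ and writing $r=-s-2$, so that $n+r=2m-2$, one checks that it is solved precisely by the stated product formula for $p_i$. Hence $u_s$ is the highest weight vector of the summand $V_{0+s}=V_s$ in \eqref{Enrightsresult}, and by Theorem~\ref{theorem:weak-sl2-categorification} the subcategory $\mathcal V_s\subseteq\mathcal L_n\boxtimes\mathcal V_0$ categorifying it is generated under $E$, $F$ and $\boxplus$ by $U_s=\boxplus_{j=0}^{m-1}(M_j\boxtimes N_{m-j})^{\boxplus p_j}$; $\mathbb Z$-linearity of the Grothendieck-group isomorphism then gives $[U_s]\mapsto\sum_j p_j\,v_j\otimes w_{m-j}=u_s$, a vector of weight $s$.

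For $s\ge0$ the Verma $V_s$ carries the singular vector $f^{s+1}u_s$ of weight $-s-2$ generating the submodule $V_{-s-2}\subseteq V_s$, and it is nonzero because $f$ acts injectively on every module in $\mathcal I_{\mathfrak g}(\mathfrak{sl}_2)$. I would compute $(f\otimes1+1\otimes f)^{s+1}u_s$ using $f^a.v_i=\tfrac{(i+a)!}{i!}v_{i+a}$ and $f^a.w_k=w_{k+a}$, collect the coefficient of $v_{j'}\otimes w_{\frac{n+s+2}{2}-j'}$, and show that the resulting binomial sum collapses---by a Vandermonde-type identity relating the coefficients occurring in $u_s$ to those occurring in $u_{-s-2}$ (which is the instance of the same product formula with the parameter $-s-2$ replaced by $s$)---to $p_{j'}$, giving $u_{-s-2}=\sum_{j'=0}^{(n+s)/2}p_{j'}\,v_{j'}\otimes w_{\frac{n+s+2}{2}-j'}$. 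Lifting through the same isomorphism yields $[U_{-s-2}]\mapsto u_{-s-2}$. If one prefers to avoid the identity, one can instead observe that the submodule of $V_s$ generated by $f^{s+1}u_s$ is again a Verma module, rerun the recursion argument of the previous paragraph for its highest weight vector, and match the leading coefficient.

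I expect the main obstacle to be the collapsing identity just described, together with the bookkeeping needed to confirm that the single product formula for $p_i$ is simultaneously the correct normalization for $u_s$ (indices $0\le j\le\tfrac{n-s-2}{2}$) and for $u_{-s-2}$ (indices $0\le j\le\tfrac{n+s}{2}$) and is compatible with applying $f^{s+1}$. Everything else is either formal---the external-product decomposition of split Grothendieck groups and $\mathbb Z$-linearity---or already established in Theorem~\ref{theorem:weak-sl2-categorification} and in Enright's Proposition~\ref{prop:direct-sum-decomp-Enright}.
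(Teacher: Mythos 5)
Your proposal is correct and follows essentially the same route as the paper: both derive the defining recursion for the coefficients $p_j$ from the highest-weight condition $e.u_s=0$ under the tensor coproduct, solve it by the stated product formula, and identify $u_{-s-2}=f^{s+1}u_s$ by imposing $e.u_{-s-2}=0$ and matching the leading coefficient, then lift through the split-Grothendieck-group identification of $\mathcal L_n\boxtimes\mathcal V_0$ with $L_n\otimes V_0$. The only variation is that your primary suggestion for the second half (collapsing $(f\otimes 1+1\otimes f)^{s+1}u_s$ via a Vandermonde-type identity) is not what the paper does; your stated alternative of rerunning the recursion for the weight-$(-s-2)$ singular vector and matching $q_0=p_0$ is exactly the paper's argument.
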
 

In particular, $\mathbb{C} \otimes_{\mathbb{Z}} K_{\oplus} (\mathcal{V}_0)\stackrel{\simeq}{\longrightarrow} V_0\cong \mathbb{C}[x]$, where the $\mathfrak{sl}_2$-action on $V_0$ is the action by the noncommutative algebra $\mathbb{C}\langle x,\partial_x\rangle /\langle \partial_x x-x\partial_x -1\rangle$.  

The projective cover 
$T_r$ is generated by $\{f^ka_{-r-2}: k\in \mathbb{N} \} \cup \{ f^k u_r: k\in \mathbb{N}\}$, where 
\begin{equation}\label{eq:gen-of-Tr-a-r-2}
a_{-r-2} = \sum_{j=0}^{\frac{n+r}{2}} q_j v_j \otimes w_{\frac{n+r+2}{2} - j } 
\hspace{4mm}
\mbox{ and } 
\hspace{4mm}
u_r = \sum_{j=0}^{\frac{n-r-2}{2}}p_j v_j \otimes w_{\frac{n-r}{2}-j} ,  
\end{equation} 
where the $q_i$'s satisfy the recurrence relation in Equation~\eqref{equation:recursion-equation-basis-coeff-betas}.  
\begin{theorem}\label{theorem:sl2-projective-module-categorification} 
Let $r$ be a nonnegative integer. 
We have an isomorphism of modules: 
\begin{equation}
\mathbb{C} \otimes_{\mathbb{Z}} K_{\oplus} (\mathcal{T}_r)\stackrel{\simeq}{\longrightarrow}T_r 
\hspace{4mm}
\mbox{ sending }
\hspace{4mm}
[A_{-r-2}]\mapsto a_{-r-2} \mbox{ and } [U_r]\mapsto u_r, 
\end{equation}
where 
\[ 
A_{-r-2} = \boxplus_{j=0}^{\frac{n+r}{2}} (M_j \boxtimes N_{\frac{n+r+2}{2} - j })^{\boxplus q_j}  
\hspace{4mm}
\mbox{ and } 
\hspace{4mm}
U_r =  \boxplus_{j=0}^{\frac{n-r-2}{2} } (M_j \boxtimes N_{\frac{n-r}{2} - j})^{\boxplus p_j} ,   
\] 
and $a_{-r-2}$ is a generator of the submodule $T_r$ given in Equation~\eqref{eq:gen-of-Tr-a-r-2} and $u_r$ is a generator in $T_r$ isomorphic to $V_r$. 
This isomorphism also gives a map of functors. 
\end{theorem}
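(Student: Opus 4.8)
The plan is to obtain Theorem~\ref{theorem:sl2-projective-module-categorification} by transporting the module-theoretic description of $T_r$ (recalled around~\eqref{eq:gen-of-Tr-a-r-2}, following Enright) across the equivalences underlying Theorem~\ref{theorem:weak-sl2-categorification} and Theorem~\ref{theorem:sl2-verma-module-categorification}, while keeping careful track of the two distinguished generators. First I would record the decategorification of the ambient category: since $\mathcal L_n$ is a Chuang--Rouquier $\mathfrak{sl}_2$-categorification of $L_n$ with $[M_j]=v_j$, and $\mathcal V_0$ is the weak $\mathcal I_{\mathfrak g}(\mathfrak{sl}_2)$-categorification of $V_0\cong\mathbb C[x]$ built in Section~\ref{section:weak-categorifications} with $[N_k]=w_k$, the box product satisfies $\mathbb C\otimes_{\mathbb Z}K_{\oplus}(\mathcal L_n\boxtimes\mathcal V_0)\cong(\mathbb C\otimes_{\mathbb Z}K_{\oplus}(\mathcal L_n))\otimes_{\mathbb C}(\mathbb C\otimes_{\mathbb Z}K_{\oplus}(\mathcal V_0))\cong L_n\otimes V_0$, with $[M_j\boxtimes N_k]=v_j\otimes w_k$. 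Crucially, the functors $F=Q_+\otimes-$ and $E=-Q_+Q_-Q_-\otimes-$ on $\mathcal L_n\boxtimes\mathcal V_0$ decategorify to the $\mathfrak{sl}_2$-action on $L_n\otimes V_0$ coming from the coproduct on the (degenerate) affine Hecke algebra of Section~\ref{section:weak-categorifications}; this is what upgrades the preceding identification from an isomorphism of vector spaces to one of $\mathfrak{sl}_2$-modules.

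Next I would invoke Theorem~\ref{theorem:weak-sl2-categorification}: the full subcategory $\bigoplus_{r\in I'}\mathcal T_r\oplus\bigoplus_{s\in I'''}\mathcal V_s$ of $\mathcal L_n\boxtimes\mathcal V_0$ decategorifies, on split Grothendieck groups, to Enright's summand decomposition~\eqref{Enrightsresult}, so that $\mathbb C\otimes_{\mathbb Z}K_{\oplus}(\mathcal T_r)$ is carried isomorphically onto the summand $T_r\subseteq L_n\otimes V_0$ compatibly with the $E,F$-action. It then remains only to identify the images of $A_{-r-2}$ and $U_r$. Additivity of $K_{\oplus}$ over $\boxplus$ together with $[M_j\boxtimes N_k]=v_j\otimes w_k$ gives
\[
[A_{-r-2}]=\sum_{j=0}^{\frac{n+r}{2}}q_j\,v_j\otimes w_{\frac{n+r+2}{2}-j}=a_{-r-2},\qquad
[U_r]=\sum_{j=0}^{\frac{n-r-2}{2}}p_j\,v_j\otimes w_{\frac{n-r}{2}-j}=u_r .
\]
To see that these classes really are the generators of the summand $T_r$, I would check directly that $a_{-r-2}$ is the weight-$(-r-2)$ vector determined by $e\cdot a_{-r-2}=0$: expanding this with $\Delta(e)=e\otimes1+1\otimes e$ and the explicit $\mathfrak{sl}_2$-actions on $L_n$ and $V_0$ recalled above produces exactly the two-term recurrence~\eqref{equation:recursion-equation-basis-coeff-betas} for the $q_j$. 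Likewise I would verify that the closed form for the $p_j$ makes $u_r$ the weight-$r$ vector generating a copy of $V_r$ whose image generates the relevant quotient. Combined with $ef^k=f^ke+kf^{k-1}(h-k+1)$, this shows $\{f^ka_{-r-2}\}_{k\ge0}\cup\{f^ku_r\}_{k\ge0}$ is an $E$-stable spanning set of $T_r$, matching the description in~\eqref{eq:gen-of-Tr-a-r-2}. The map of functors $[F]\mapsto f=x$, $[-E]\mapsto -e=-x\partial_x^2$ is then inherited verbatim from $\mathcal L_n\boxtimes\mathcal V_0$ under the identification $V_0\cong\mathbb C[x]$.

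The \emph{main obstacle} is twofold and is the only place that requires work beyond bookkeeping. First, one must make precise that the subcategory-level decomposition of Theorem~\ref{theorem:weak-sl2-categorification} decategorifies to Enright's decomposition, i.e. that the idempotents cutting out $\mathcal T_r$ inside $\mathcal L_n\boxtimes\mathcal V_0$ are sent by $K_{\oplus}$ to the projector onto the $T_r$-summand; this is essentially packaged by Theorem~\ref{theorem:weak-sl2-categorification}, but its use here must be spelled out at the level of generators. Second, the symbols $\boxplus^{q_j}$ and $\boxplus^{p_j}$ denote honest (not virtual) objects only if all $q_j,p_j$ are nonnegative integers; this positivity must be read off from the closed form for $p_j$ and from the recurrence~\eqref{equation:recursion-equation-basis-coeff-betas} for $q_j$, and it is in fact forced, since a negative coefficient would require moving a summand to the other side of the Grothendieck-group identity, contradicting that $T_r$ is a genuine direct summand of $L_n\otimes V_0$. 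Everything else is routine transport of structure along the equivalences already in hand, together with the observation that the resulting $\mathfrak{sl}_2$-module map is well defined and additive on $K_{\oplus}$ because the functors $E$ and $F$ preserve $\mathcal T_r$.
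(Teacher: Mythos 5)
Your overall strategy (decategorify $\mathcal L_n\boxtimes\mathcal V_0$ to $L_n\otimes V_0$, use additivity of $K_\oplus$ to read off $[A_{-r-2}]$ and $[U_r]$, and transport Enright's summand decomposition) is the same as the paper's, but the step where you identify the generator $a_{-r-2}$ contains a genuine error. You propose to characterize $a_{-r-2}$ as the weight-$(-r-2)$ vector with $e\cdot a_{-r-2}=0$ and to derive the recurrence \eqref{equation:recursion-equation-basis-coeff-betas} from that condition. This cannot work: a weight-$(-r-2)$ vector of $T_r$ killed by $e$ is a singular vector, and the space of such vectors in $T_r$ is one-dimensional, spanned by $f^{r+1}u_r=u_{-r-2}$, which lies \emph{inside} the Verma submodule $V_r$. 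Its coefficients are the $p_j$'s and satisfy the two-term recurrence \eqref{equation:relation-alpha-ij} (this is exactly the computation in Proposition~\ref{proposition:p-j-for-Verma-mod-cat}), not the five-term recurrence \eqref{equation:recursion-equation-basis-coeff-betas}. If you take such a vector as your ``generator,'' then $f^ka_{-r-2}=f^{k+r+1}u_r$ up to scalar, so $\{f^ka_{-r-2}\}\cup\{f^ku_r\}$ spans only $V_r$ and you never see the non-split extension $0\to V_r\to T_r\to V_{-r-2}\to 0$; the defining feature of $a_{-r-2}$ is precisely that $e\cdot a_{-r-2}\neq 0$ and reaches back into $V_r$. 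The paper instead characterizes the generator by the Casimir condition $(\Omega-r(r+2))^2.a_{-r-2}=0$ together with $(\Omega-r(r+2)).a_{-r-2}\neq 0$, and it is expanding $(\Omega-c)^2$ on a weight vector that produces the five-term relation \eqref{equation:recursion-equation-basis-coeff-betas} among the $q_j$.

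A second, smaller gap is your positivity argument. The $q_j$ are coordinates of a chosen generator in a two-dimensional weight space, not multiplicities of a direct summand, so ``$T_r$ is a genuine summand'' does not force them to be nonnegative; the recurrence only determines them up to the two-parameter freedom of rescaling and adding multiples of the singular vector. The paper gets integrality by clearing denominators and then gets positivity by replacing $a_{-r-2}$ with $a_{-r-2}+mu_{-r-2}$ for $m\gg 0$, which is legitimate because $u_{-r-2}$ has positive coefficients and is an honest $\Omega$-eigenvector, so the conditions $(\Omega-c)^2.(a_{-r-2}+mu_{-r-2})=0$ and $(\Omega-c).(a_{-r-2}+mu_{-r-2})\neq 0$ persist. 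You need some such normalization step before the expression $\boxplus_j(M_j\boxtimes N_{\frac{n+r+2}{2}-j})^{\boxplus q_j}$ denotes an actual object of $\mathcal L_n\boxtimes\mathcal V_0$.
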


\begin{remark}
We emphasize on the positivity of the coefficients $p_i$ and $q_j$ in Theorem~\ref{theorem:sl2-verma-module-categorification} and Theorem~\ref{theorem:sl2-projective-module-categorification}. 
\end{remark}

In Section~\ref{subsection:decomp-tensor-product-modules}, we recall that all Verma modules $V_{\lambda}$ for $\mathfrak{sl}_2$ for the highest weight $\lambda\in \mathbb{C}$ can be realized using the Heisenberg algebra action on the polynomial ring $\mathbb{C}[x]$ in one indeterminant. 
In the case when $\lambda=0$, Khovanov has a categorification $\mathcal H$ of the Heisenberg algebra $H_{\mathbb{Z}}$ with infinitely-many generators $p_i$ and $q_i$, with $i\in \mathbb N_{>0}$, which ``acts'' on the category $\text{End}(\oplus_{n\geq 0}S_n$-mod$)$ of functors whose Grothendieck group is the Fock space with highest weight $\lambda=0$.

The categorification using degenerate affine Hecke algebras has been constructed by Khovanov in \cite{khovanov2010heisenberg}, 
while nondegenerate affine Hecke algebras were used in \cite{MR3032820}. 

For us $\mathcal{L}$ is an additive category, 
$\mathcal{T}_r$ is a category of projective modules and $\mathcal{V}_0$ is an additive category. 
$\mathcal{T}_r$ has objects that are indecomposable, but $\mathcal{T}_r$ is not triangularizable.  Thus by a result of Adelman, the abelianization of $\mathcal T_r$ is zero. 

\subsection{Adelman's abelianization} Here we review the construction of an abelian category from an additive one using the technique due to M. Adelman
\cite{MR0318265} as it deserves to be better known.

Suppose $\germ{M}$ is an additive category and then consider the category $\germ{M}^{\to\to}$ whose objects are double arrows
\begin{equation}
A'\to A\to A''
\end{equation}
where $A'$, $A$, and $A''$ are all objects in $\germ{M}$ and the horizontal maps are morphisms in $\germ{M}$. 
The morphisms in the category $\germ{M}^{\to\to}$ are triples $x',x,x''$ where the diagram 
$$
\begin{CD}
A' @>>> A @>>> A'' \\
@Vx'VV @VxVV @VVx''V \\
B' @>>> B @>>> B'' \\
\end{CD}
$$
is commutative. 

In this category one defines an equivalence relation as follows:
$$
\begin{CD}
A' @>a'>> A @>a>> A'' \\
@V\alpha'VV @V\alpha VV @VV\alpha ''V \\
B' @>>b'> B @>>b> B'' \\
\end{CD}
$$
is equivalent to 
$$
\begin{CD}
A' @>a'>> A @>a>> A'' \\
@V\beta 'VV @V\beta VV @VV\beta ''V \\
B' @>>b'> B @>>b> B'' \\
\end{CD}
$$
if there exists morphisms $s_1:A\to B'$ and $s_2:A''\to B$ such that a homotopy condition is satisfied 
\begin{equation}
b's_1+s_2a=\alpha-\beta. 
\end{equation}

The quotient category $\germ{M}^{\to\to} /\equiv$ is denoted by $\text{Ab}(\germ{M})$. 

The kernel of
$$
\begin{CD}
A' @>a'>> A @>a>> A'' \\
@V\alpha'VV @V\alpha VV @VV\alpha ''V \\
B' @>>b'> B @>>b> B'' \\
\end{CD}
$$
is 
$$
\begin{CD}
A' \oplus B'@>\phi >> A @>\psi>> A'' \\
@V(1,0)VV @V(1,0)VV @VV(0,1),V \\
B' @>>b'> B @>>b> B'' \\
\end{CD}
$$
where
$$
\phi=\begin{pmatrix} a' & 0 \\ \alpha ' & 1 \end{pmatrix},\quad\text{ and }\quad \psi =\begin{pmatrix} \alpha & -b \\ a & 0 \end{pmatrix}.
$$

The cokernel of $(\alpha',\alpha,\alpha'')$ is 
$$
\begin{CD}
B' @> >> B @>>> B'' \\
@VVV @VVV @VVV \\
B'\oplus A @>>\gamma> B \oplus A''@>>\rho> B''\oplus A'', \\
\end{CD}
$$
where
$$
\gamma=\begin{pmatrix} b' & \alpha \\ 0 & -a \end{pmatrix},\quad\text{ and }\quad \rho =\begin{pmatrix} b & \alpha''\\ 0 & -1 \end{pmatrix}.
$$

One then identities the category $\germ{M}$ with the full subcategory of $\text{Ab}(\germ{M})$ consisting of double arrows of the form $0\to A\to 0$ where $A$ is an object in $\germ{M}$.

\begin{theorem}[\cite{MR0318265}, Theorem 1.14] Suppose $F:\germ{M}\to \germ{B}$ is an additive functor with $\germ{B}$ being an abelian category. Then there exists a unique exact functor $K:\text{Ab}(\germ{M})\to \germ{B}$ such that the diagram 
\[ 
\xymatrix@-1pc{
\ar[rdd]_{F} \germ{M} \ar[rr]^{I_{\germ{M}}}  & & \text{Ab}(\germ{M}) \ar[ldd]^{K}\\ 
 & & \\ 
 &\germ{B} &  \\ 
}
\] 
is commutative up to natural equivalence (the functor $K$ is also unique up to natural equivalence).
\end{theorem}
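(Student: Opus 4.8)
The plan is to construct $K$ by ``taking homology in the middle.'' Given an object $A'\xrightarrow{a'}A\xrightarrow{a}A''$ of $\mathrm{Ab}(\germ{M})$, apply $F$ to obtain $F(A')\xrightarrow{Fa'}F(A)\xrightarrow{Fa}F(A'')$ in $\germ{B}$ and set $K(A'\to A\to A'')$ equal to the image of the canonical map $\ker(Fa)\to\operatorname{coker}(Fa')$, a subquotient of $F(A)$, equivalently $\ker(Fa)\big/\bigl(\ker(Fa)\cap\operatorname{im}(Fa')\bigr)$. This is legitimate because $\germ{B}$ is abelian, and it reduces to ordinary homology precisely when the double arrow is a complex. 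For a morphism represented by a triple $(\alpha',\alpha,\alpha'')$, commutativity of the two squares gives $Fb\circ F\alpha=F\alpha''\circ Fa$ and $F\alpha\circ Fa'=Fb'\circ F\alpha'$, so $F\alpha$ sends $\ker(Fa)$ into $\ker(Fb)$ and $\operatorname{im}(Fa')$ into $\operatorname{im}(Fb')$, hence induces a morphism of the associated subquotients; this is $K(\alpha',\alpha,\alpha'')$, and it depends only on the middle component $\alpha$.

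First I would verify that $K$ is well defined on morphism classes and functorial. If $(\alpha',\alpha,\alpha'')$ and $(\beta',\beta,\beta'')$ are related by a homotopy $b's_1+s_2a=\alpha-\beta$, then on $\ker(Fa)$ the summand $Fs_2\circ Fa$ vanishes while $Fb'\circ Fs_1$ takes values in $\operatorname{im}(Fb')$, so $F\alpha$ and $F\beta$ induce the same map $\ker(Fa)\big/\bigl(\ker(Fa)\cap\operatorname{im}(Fa')\bigr)\to\ker(Fb)\big/\bigl(\ker(Fb)\cap\operatorname{im}(Fb')\bigr)$; hence $K$ descends to $\mathrm{Ab}(\germ{M})$, and functoriality is inherited from $F$. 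The commutativity $K\circ I_{\germ{M}}\simeq F$ is then essentially by inspection: $K(0\to A\to 0)=\ker\bigl(F(A)\to 0\bigr)\big/\operatorname{im}\bigl(0\to F(A)\bigr)=F(A)$, canonically and naturally in $A$.

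The substance of the proof --- and the step I expect to be the main obstacle --- is the exactness of $K$. I would check directly that $K$ carries the kernel and the cokernel of a morphism $(\alpha',\alpha,\alpha'')$ in $\mathrm{Ab}(\germ{M})$, as given by the explicit double-arrow formulas with the matrices $\phi,\psi$ and $\gamma,\rho$ recorded above, onto $\ker K(\alpha',\alpha,\alpha'')$ and $\operatorname{coker}K(\alpha',\alpha,\alpha'')$ in $\germ{B}$. Since $F$ is additive it commutes with the finite biproducts $A'\oplus B'$, $B\oplus A''$, $B''\oplus A''$ that occur, so after applying $F$ this becomes a finite diagram chase in the abelian category $\germ{B}$: one computes the relevant kernels, cokernels, and homology subquotients of the complexes $F(A')\oplus F(B')\to F(A)\to F(A'')$ and $F(B')\oplus F(A)\to F(B)\oplus F(A'')\to F(B'')\oplus F(A'')$ and identifies them with the kernel and cokernel of the induced map on subquotients. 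This is routine in spirit but calculationally delicate; equivalently, one can show $K$ sends a short exact sequence of $\mathrm{Ab}(\germ{M})$ to a short exact sequence of $\germ{B}$ using the standard presentation of short exact sequences there.

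Finally, for uniqueness, suppose $K'\colon\mathrm{Ab}(\germ{M})\to\germ{B}$ is exact with $K'\circ I_{\germ{M}}\simeq F$. The explicit kernel and cokernel formulas show that the image of $I_{\germ{M}}$ generates $\mathrm{Ab}(\germ{M})$ under kernels, cokernels, and finite biproducts: every object $A'\xrightarrow{a'}A\xrightarrow{a}A''$, and likewise every morphism, is obtained from objects and morphisms of $I_{\germ{M}}(\germ{M})$ by finitely many such operations. Since an exact additive functor preserves kernels, cokernels, and biproducts, the given natural isomorphism $K\circ I_{\germ{M}}\simeq F\simeq K'\circ I_{\germ{M}}$ propagates, one construction at a time, to a natural isomorphism $K\simeq K'$, compatible on overlaps; this yields the asserted uniqueness up to natural equivalence and completes the proof.
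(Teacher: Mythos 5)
The paper does not actually prove this statement --- it is quoted from Adelman (\cite{MR0318265}, Theorem 1.14) and used as a black box --- so there is no in-paper argument to measure yours against; what you have written is the standard (essentially Adelman's) proof of the universal property, and its skeleton is correct. Defining $K(A'\to A\to A'')$ as the image of the canonical map $\ker(Fa)\to\coker(Fa')$ is the right (indeed forced) choice, your verification that a homotopy $b's_1+s_2a=\alpha-\beta$ induces the same map on the subquotients is exactly the needed computation, and $K\circ I_{\germ{M}}\cong F$ is immediate as you say. Two caveats. First, the real substance --- exactness --- is only promised, not performed: you must actually check that $K$ carries the kernel and cokernel double arrows to $\ker K(\alpha',\alpha,\alpha'')$ and $\coker K(\alpha',\alpha,\alpha'')$; note that the formulas ``recorded above'' in this paper are garbled (the matrices $\phi,\psi$ and $\gamma,\rho$ do not match the shapes of the displayed arrows such as $A'\oplus B'\to A\to A''$), so that chase has to be run against Adelman's correct formulas, not the ones printed here. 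Second, your uniqueness argument silently relies on the claim that every object $(A'\to A\to A'')$ of $\mathrm{Ab}(\germ{M})$ is \emph{canonically} the image of the induced map $\ker I_{\germ{M}}(a)\to\coker I_{\germ{M}}(a')$, i.e.\ that $I_{\germ{M}}(\germ{M})$ generates $\mathrm{Ab}(\germ{M})$ under kernels, cokernels and biproducts; this is true, but it is proved by the same explicit kernel/cokernel computations you deferred, and it is precisely what makes the isomorphisms $K(X)\cong K'(X)$ canonical and hence assemble into a natural equivalence. With those two verifications written out, your argument is complete and coincides with the route taken in the cited source.
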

In the above, $I_{\germ{M}}$ is defined on objects as $I_{\germ{M}}(A)$ is the equivalence class of $0\to A\to 0$ and $I_{\germ{M}}(A\to B)$ is the equivalence class of
$$
\begin{CD}
0@> >> A @>>> 0 \\
@VVV @VVV @VVV \\
0 @>>> B @>>> 0.\\
\end{CD}
$$

We leave it as an exercise to show for Enright's category that $\text{Ab}(\mathcal I)=0$, and thus $\text{Ab}(\mathcal T)=0$.

The category 
$\mathcal{V}_0$ has objects $Q_+$, which are induction (graphical) functors, and morphisms $X$ and $T$.

\section{Background}\label{section:background}

\subsection{Modules and their bases}\label{subsection:decomp-tensor-product-modules}
The idea behind our construction of the categorification of a Verma module for $\mathfrak{sl}_2$ goes back to the tensor product decomposition formula of Enright for modules in category $\mathcal I_{\mathfrak{g}}(\mathfrak{sl}_2)$. 

Let $L_n$ be the $n+1$ dimensional $\mathfrak{sl}_2$-module with highest weight $n$.   

\subsubsection{Finite dimensional $\frak{sl}_2$-modules $L_n$}
The commutator identity $[e,f^k]=kf^{k-1}(h-k+1)$ is used to prove the following: 
\begin{lemma}[\cite{MR499562}] The finite dimensional $\mathfrak{sl}_2$-module $L_n$ with highest weight $n$ has basis $\{ v_i\}_{i=0}^n$, where $v_j = \dfrac{1}{j!} f^j v_0$ for $0\leq j\leq n$. 
In particular,  
\begin{equation}
 f.v_i = (i+1)v_{i+1}, \hspace{4mm} 
 e.v_i= (n-i+1)v_{i-1}, \hspace{4mm}  
h.v_i = (n-2i)v_i. 
\end{equation}

\end{lemma}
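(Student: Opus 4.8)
The plan is to verify the three displayed formulas directly from the definition $v_j = \frac{1}{j!}f^j v_0$, using the standard structure of $L_n$ as the $(n+1)$-dimensional irreducible $\mathfrak{sl}_2$-module, and then observe that $\{v_i\}_{i=0}^n$ is linearly independent (being nonzero weight vectors of distinct $h$-weights) and spans by irreducibility. First I would recall that $v_0$ is a highest weight vector: $e.v_0 = 0$ and $h.v_0 = n v_0$. The formula $f.v_i = (i+1)v_{i+1}$ is then immediate from the definition, since $f.v_i = f.\frac{1}{i!}f^i v_0 = \frac{(i+1)}{(i+1)!}f^{i+1}v_0 = (i+1)v_{i+1}$. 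For the $h$-action, I would use that $f$ lowers weights by $2$: by induction $h.(f^i v_0) = (n-2i)f^i v_0$, hence $h.v_i = (n-2i)v_i$.

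The only formula requiring real work is $e.v_i = (n-i+1)v_{i-1}$, and for this I would invoke the commutator identity $[e,f^k] = k f^{k-1}(h-k+1)$ highlighted just before the Lemma statement. Applying this to $v_0$ and using $e.v_0 = 0$ and $h.v_0 = n v_0$ gives $e.f^i v_0 = [e,f^i]v_0 = i f^{i-1}(h - i + 1)v_0 = i(n - i + 1)f^{i-1}v_0$. Dividing by $i!$ yields $e.v_i = \frac{i(n-i+1)}{i!}f^{i-1}v_0 = (n-i+1)\cdot\frac{1}{(i-1)!}f^{i-1}v_0 = (n-i+1)v_{i-1}$, as claimed (with the convention $v_{-1} = 0$, consistent with $e.v_0 = 0$). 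One should also check the boundary behavior at the top: the formula $e.v_i = (n-i+1)v_{i-1}$ forces $f^{n+1}v_0 = 0$, since $e.v_{n+1}$ would have to be $(n-(n+1)+1)v_n = 0$ while also $v_{n+1}$ generates a submodule that must vanish by irreducibility and finite-dimensionality; this confirms $\dim L_n = n+1$ and that $\{v_i\}_{i=0}^n$ is a basis.

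The main (and essentially only) obstacle is bookkeeping with the commutator identity — making sure the factor $\frac{1}{j!}$ in the normalization interacts correctly with the coefficient $k(h-k+1)$ coming from $[e,f^k]$, and handling the edge cases $i=0$ (where $v_{-1}=0$) and $i=n$ (where $v_{n+1}=0$). Since the identity $[e,f^k]=kf^{k-1}(h-k+1)$ is itself proved by a short induction on $k$ using $[e,f]=h$, the whole argument is elementary; I would present it as a two-line induction followed by the three substitutions above.
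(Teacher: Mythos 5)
Your proposal is correct and matches the route the paper indicates: the paper gives no written proof beyond citing the reference and noting that the commutator identity $[e,f^k]=kf^{k-1}(h-k+1)$ is the key tool, and your verification (direct computation for $f$ and $h$, the commutator identity applied to $v_0$ for $e$, plus the boundary check $f^{n+1}v_0=0$) is exactly that standard argument, carried out correctly.
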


\subsubsection{Verma modules $V_\lambda$ for $\frak{sl}_2$}  Let $U(\mathfrak{g})$ denote the enveloping algebra of a Lie algebra $\mathfrak{g}$, $\mathfrak h=\mathbb Ch$ its Cartan subalgebra, $\mathfrak b=\mathbb Ch\oplus \mathbb Ce$ is Borel subalgebra and $\mathbb C_\lambda=\mathbb C v_\lambda$ 
the one dimensional $\mathfrak b$-module defined by $h. v_{\lambda}=\lambda v_{\lambda}$, with $e. v_{\lambda}=0$ for a fixed $\lambda\in \mathbb C$. 
Let $V_\lambda =U(\mathfrak{sl}_2)\otimes_{U(\mathfrak b)}\mathbb C_\lambda$ be the Verma module with highest weight $\lambda\in\mathbb C$.   Then by the Poincar\'e-Birkhoff-Witt Theorem,  
$$
V_\lambda= \mathbb C[f]v_\lambda=\bigoplus_m V_{\lambda,\lambda-2m}, 
$$
where $v_\lambda$ is the highest weight vector with weight $\lambda$ and 
$$
V_{\lambda,\lambda-2m}=\mathbb Cf^mv_\lambda=\{w\in V_\lambda:  h.w=(\lambda-2m)w\},
$$
the $\lambda-2m$ weight space.

Next, Lemma~\ref{lemma:Sophus-Lie} is a classical result due to Sophus Lie. 
\begin{lemma}\label{lemma:Sophus-Lie}  
	Consider the ring $\mathbb C[x]$ as a vector space over $\mathbb C$ and let $\lambda\in\mathbb C$. We define $\rho:\mathfrak{sl}_2\to \text{End}(\mathbb C[x])$ as follows: 
		\begin{align*}
			\rho(f)&:=x, \\
			\rho(e)&:=-x\partial^2_x+\lambda \partial_x, \\ 
			\rho(h)&:=-2x\partial_x+\lambda. \\ 
		\end{align*}
	Then $\rho$ defines an $\mathfrak{sl}_2$-representation isomorphic to $V_\lambda$.
\end{lemma}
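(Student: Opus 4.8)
The plan is to verify directly that $\rho$ is a Lie algebra homomorphism and then exhibit an explicit isomorphism $V_\lambda \xrightarrow{\sim} \mathbb{C}[x]$ of $\mathfrak{sl}_2$-modules. First I would check that the three operators $\rho(e),\rho(f),\rho(h)$ on $\mathbb{C}[x]$ satisfy the defining relations $[\rho(h),\rho(e)] = 2\rho(e)$, $[\rho(h),\rho(f)] = -2\rho(f)$, and $[\rho(e),\rho(f)] = \rho(h)$. This is a routine computation in the Weyl algebra $\mathbb{C}\langle x,\partial_x\rangle$ using only $[\partial_x, x] = 1$; for instance $[\rho(e),\rho(f)] = [-x\partial_x^2 + \lambda\partial_x,\, x]$, which one expands via the Leibniz-type identity $[AB,C] = A[B,C] + [A,C]B$ to obtain $-2x\partial_x + \lambda = \rho(h)$. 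Since these relations hold, $\rho$ extends to an algebra homomorphism $U(\mathfrak{sl}_2) \to \End(\mathbb{C}[x])$, making $\mathbb{C}[x]$ an $\mathfrak{sl}_2$-module.

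Next I would identify this module with $V_\lambda$. The constant polynomial $1 \in \mathbb{C}[x]$ satisfies $\rho(h).1 = \lambda\cdot 1$ and $\rho(e).1 = 0$, so $1$ is a highest weight vector of weight $\lambda$; by the universal property of the Verma module there is a unique $\mathfrak{sl}_2$-module map $\varphi: V_\lambda \to \mathbb{C}[x]$ sending $v_\lambda \mapsto 1$. Since $\rho(f) = x$, we get $\varphi(f^m v_\lambda) = x^m$, so $\varphi$ sends the PBW basis $\{f^m v_\lambda\}_{m\geq 0}$ of $V_\lambda$ to the monomial basis $\{x^m\}_{m\geq 0}$ of $\mathbb{C}[x]$; hence $\varphi$ is bijective and therefore an isomorphism. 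One may alternatively argue that $\mathbb{C}[x]$ is generated as an $\mathfrak{sl}_2$-module by $1$ (because $\rho(f)$ raises degree by exactly one) and is infinite-dimensional with one-dimensional highest weight space, so it is a nonzero quotient of $V_\lambda$; since $V_\lambda$ has a basis indexed by $\mathbb{N}$ and $\mathbb{C}[x]$ does too, the surjection $V_\lambda \twoheadrightarrow \mathbb{C}[x]$ must be an isomorphism by comparing weight space dimensions.

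There is no serious obstacle here: the only thing to be careful about is the bookkeeping of operator products in the Weyl algebra when checking the bracket relations, and the observation that $\rho(f) = x$ acts freely (it has no kernel and shifts the natural grading), which is what forces $\varphi$ to be injective rather than merely surjective. Thus $\rho$ defines an $\mathfrak{sl}_2$-representation isomorphic to $V_\lambda$, as claimed. $\qed$
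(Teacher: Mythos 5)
Your proposal is correct. The paper states this lemma as a classical result of Sophus Lie and gives no proof of its own, and your argument is the standard one: the bracket verifications in the Weyl algebra are right (in particular $[-x\partial_x^2+\lambda\partial_x,\,x]=-2x\partial_x+\lambda$ and $[\rho(h),\rho(e)]=2\rho(e)$), the constant $1$ is indeed a highest weight vector of weight $\lambda$ killed by $\rho(e)$, and the induced map $V_\lambda\to\mathbb{C}[x]$ carries the PBW basis $\{f^m v_\lambda\}$ bijectively onto $\{x^m\}$, so it is an isomorphism; your alternative weight-space-dimension argument is equally valid.
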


In particular the Verma module $V_0\cong \mathbb{C}[x]$, with basis $w_k$, where $w_k=x^k$, with 
\begin{equation}
\rho(f)(w_k)=f.w_k = w_{k+1}, \hspace{1mm}  
\rho(e)(w_k)=e.w_k = -k(k-1)w_{k-1},    \hspace{1mm} 
\rho(h)(w_k)=h.w_k = -2kw_k. 
\end{equation} 

 \subsubsection{Indecomposable projective modules $T_r$. }    The Casimir operator for $\mathfrak{sl}_2$ is defined to be the element
\begin{equation}
\Omega = h^2+2h+4fe, \label{casimir}
\end{equation}
in $U(\mathfrak{sl}_2)$.
For $r \in \mathbb N_{>0}$ consider the following left ideals of $U(\mathfrak{sl}_2)$:
\begin{align}
\label{idealM} \mathfrak{M}_r &:=U(\mathfrak{sl}_2)\{h+n+2, e^{r+2}, (\Omega_{\sl}-r^2-2r)^2\},
\end{align}
and define
\begin{align}
T_r&:=U(\mathfrak{sl}_2)/\mathfrak{M}_r.
\end{align}
Let $1_r$ denote the image of 1 in   $T_r$.  Then $T_r$ is an indecomposable projective module in the category $\mathcal I_{\mathfrak{g}}(\mathfrak{sl}_2)$ (see \cite{MR541329}) and the structure of the $\mathfrak{sl}_2$-module $T_r$ has the form:   
\[
\xymatrix{
& {\overbrace{e^{r+1}}} \ar[d]\\
& f e^{r+1} \ar@/^/[u] \ar[d]\\
& {\vdots} \ar@/^/[u] \ar[d]\\
& f^r e^{r+1} \ar@/^/[u] \ar[d]\\
{1_r} \ar@/^/[ur] \ar[d] & {\overbrace{f^{r+1}e^{r+1}}} \ar[d]\\
f \ar@/^/[ur] \ar@/^/[u] \ar[d] & f^{r+2} e^{r+1} \ar@/^/[u] \ar[d]\\
{\vdots} \ar@/^/[ur] \ar@/^/[u] & {\vdots} \ar@/^/[u] 
}
\]
Each vertex above constitutes a weight vector in a basis for $T_r = U(\mathfrak{sl}_2)/\mathfrak{M}_r$ and upward arrows (including the upward arrows at $45$ degrees) denote the action of $e$ and downward arrows denote the action of $f$ on a basis vector.    The right most column, which includes weight vectors $f^{i} e^{r+1}$, where $0\leq i\in\mathbb{Z}$, is a submodule of $T_r$ which is isomorphic to $V_{r}$ and weight vectors along the left column are isomorphic to $V_{-r-2}$ after one quotients out $T_r$ by the submodule isomorphic to $V_{r}$.   Upward facing braces indicate that the vector below them is a highest weight vector.  The weight vector $1_r$ has weight $-r-2$ and is a generator of $T_r$. We thus have a short exact sequence: 
\[ 
0 \rightarrow V_r \rightarrow T_r \rightarrow V_{-r-2} \rightarrow 0,  
\] 
and the action by $\mathfrak{sl}_2$ on weight vectors in $T_r$ is described as follows: 
\[ 
\begin{aligned}
f(f^ie^j) &= f^{i+1}e^j, \\ 
e(f^ie^j) &= f^i e^{j+1} + (2ij -i^2-i(n+1))f^{i-1}e^j,   \\ 
h(f^ie^j) &= (2j-2i-n-2) f^i e^j. \\ 
\end{aligned} 
\]

\subsubsection{Tensor product of a standard and a Verma module}

Proposition~\ref{prop:direct-sum-decomp-Enright} from \cite{MR541329} (cf. Proposition 3.12) gives us a decomposition of the tensor product of a standard module and a Verma module into a decomposition of irreducible modules. 

\begin{proposition}\label{prop:direct-sum-decomp-Enright}  Let $\lambda \in\mathbb Z$ and $0\leq n\in\mathbb N$. The weights of $ L_n$ are in the set 
$I=\{-n,-n+2,\dots,n-2, n\}$, and define sets as follows: 
\begin{align*}
I'&=\{r\in I :  \lambda+r\geq 0\text{ and }-(\lambda+r)-2-\lambda\in I\},  \\
I''&= \{-r-2\lambda -2 :  r\in I'\}, \mbox{ and } \\ 
I''' &= I \setminus (I' \cup I''). \\ 
\end{align*}
Then we have a decomposition of $I=I' \: \dot{\cup}\: I'' \: \dot{\cup} \:  I'''$ into disjoint sets, and the tensor product of $L_n$ and $V_{\lambda}$ decomposes as: 
\begin{align} 
L_n\otimes V_\lambda\cong \bigoplus_{r\in I'} T_{\lambda +r}\oplus\bigoplus_{s\in I'''} V_{\lambda+s}.  \label{tensorproductdecomposition}
\end{align}  
\end{proposition}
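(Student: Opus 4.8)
The plan is to prove the tensor product decomposition \eqref{tensorproductdecomposition} by analyzing the $\mathfrak{sl}_2$-module structure of $L_n \otimes V_\lambda$ directly, relying on the classification of indecomposables in $\mathcal{I}(\mathfrak{sl}_2)$ (the Remark following Definition~\ref{defn:I-g-sl2-category}) together with the explicit module structure of $T_r$ recalled above. First I would verify that $L_n \otimes V_\lambda$ lies in the category $\mathcal{I}_{\mathfrak{g}}(\mathfrak{sl}_2)$: it is a weight module because both tensor factors are, its weight spaces are finite-dimensional (since $L_n$ is finite-dimensional and each weight space of $V_\lambda$ is one-dimensional); $f$ acts injectively because if $(f \otimes 1 + 1 \otimes f)\xi = 0$ one can use a PBW-type filtration argument on the $V_\lambda$ factor — or more simply, observe that $L_n \otimes V_\lambda \cong L_n \otimes \mathbb{C}[f]v_\lambda$ is free over $\mathbb{C}[f]$ after a suitable change of basis; and $e$ acts locally nilpotently because it does so on $L_n$ and any element lives in $L_n \otimes (\text{finite-dim'l piece})$ after projecting to finitely many weight spaces. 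Hence by the classification, $L_n \otimes V_\lambda$ decomposes as a direct sum of modules of the form $V_\mu$ and $T_\mu$.

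Next I would pin down the multiplicities. The key bookkeeping device is the formal character. The character of $V_\mu$ is $\frac{q^\mu}{1-q^{-2}}$ and the character of $T_\mu$ (with $\mu \geq 0$ an integer) is $\frac{q^{-\mu-2}}{1-q^{-2}} + \frac{q^\mu}{1-q^{-2}}$, reflecting the short exact sequence $0 \to V_\mu \to T_\mu \to V_{-\mu-2}\to 0$ displayed above. Since $\ch(L_n \otimes V_\lambda) = \ch(L_n)\ch(V_\lambda) = \left(\sum_{i=0}^n q^{n-2i}\right)\frac{q^\lambda}{1-q^{-2}}$, comparing characters will force the multiplicities. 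The combinatorial content is exactly the definition of $I'$, $I''$, $I'''$: a weight $\lambda+r$ with $r \in I$ "wants" to be the top of a Verma summand, but when both $\lambda+r \geq 0$ and its $T$-partner weight $-(\lambda+r)-2$ (equivalently $\lambda + (-r-2\lambda-2)$, i.e. the index $-r-2\lambda-2 \in I''$) also appears among the available highest-weight slots, these two Verma characters must instead combine into a single $T_{\lambda+r}$; the leftover weights indexed by $I'''$ give honest Verma summands $V_{\lambda+s}$. I would make this precise by locating the "highest weight vectors" — i.e. vectors killed by $e$ — and "$e$-nilpotent but not $e$-injective-complement" vectors inside $L_n \otimes V_\lambda$ weight space by weight space, using that $\dim(L_n\otimes V_\lambda)_\mu$ stabilizes for $\mu \ll 0$.

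The main obstacle I expect is \emph{distinguishing the $T_\mu$ summands from pairs of Verma summands $V_\mu \oplus V_{-\mu-2}$} — the two have the same character, so a character count alone cannot see the extension, and one genuinely needs to exhibit the nonsplit gluing. Concretely I would argue as follows: for $r \in I'$, find the weight vector of weight $-(\lambda+r)-2$ in $L_n \otimes V_\lambda$ and show that applying $e$ to it repeatedly does \emph{not} annihilate it but instead climbs up into the weight-$(\lambda+r)$ region and then (being in the image of $e^{r+1}$) generates the submodule isomorphic to $V_{\lambda+r}$ — matching exactly the arrow diagram for $T_r$ reproduced above, with $1_r$ playing the role of the weight $-(\lambda+r)-2$ generator. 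The Casimir operator $\Omega = h^2 + 2h + 4fe$ is the tool here: it acts as the scalar $\lambda^2+2\lambda$ on all of $V_\lambda$ hence is \emph{not} semisimple on $L_n \otimes V_\lambda$ in general, and its generalized eigenspace decomposition separates the summands, with the rank-$2$ Jordan blocks of $\Omega$ detecting precisely the $T_\mu$ pieces. Once the submodule $\bigoplus_{r\in I'} V_{\lambda+r}$ (sitting inside the $T$'s) and a complementary span of highest weight vectors for the $I'''$-Vermas are produced explicitly, a dimension/character count in each weight space closes the argument, and the disjointness $I = I' \dot\cup I'' \dot\cup I'''$ together with the bijection $r \mapsto -r-2\lambda-2$ between $I'$ and $I''$ is a direct check from the defining inequalities.
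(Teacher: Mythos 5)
You should know at the outset that the paper does not prove this proposition at all: it is imported verbatim from Enright \cite{MR541329} (his Proposition 3.12), so there is no in-paper proof to compare against, and I am judging your proposal on its own terms. The outer architecture you propose is the natural one and its first stages are sound: $L_n\otimes V_\lambda$ lies in $\mathcal I_{\mathfrak g}(\mathfrak{sl}_2)$ (freeness over $\mathbb C[f]$ via the tensor identity $L_n\otimes V_\lambda\cong U(\mathfrak{sl}_2)\otimes_{U(\mathfrak b)}(L_n\otimes\mathbb C_\lambda)$ gives injectivity of $f$, and boundedness of the weights gives local nilpotence of $e$); the classification of indecomposables reduces everything to Vermas and $T$'s; and the character bookkeeping correctly forces the multiset of Verma constituents to be $\{\lambda+r : r\in I\}$, with any $T$-summand necessarily pairing a constituent $\lambda+r\geq 0$ with $-(\lambda+r)-2$. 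The combinatorics of $I=I'\,\dot\cup\,I''\,\dot\cup\,I'''$ is, as you say, a direct check.

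The genuine gap is in the only step that carries real content: showing that for \emph{every} $r\in I'$ the two constituents actually glue into $T_{\lambda+r}$ rather than splitting as $V_{\lambda+r}\oplus V_{-(\lambda+r)-2}$ (same character, same Casimir eigenvalue $c=(\lambda+r)(\lambda+r+2)$). You correctly identify this obstacle and the right invariant — non-semisimplicity of $\Omega$ on the generalized eigenspace, equivalently that the space of $e$-singular vectors of weight $-(\lambda+r)-2$ there is one-dimensional rather than two-dimensional — but your argument for it is circular as written: "find the weight vector of weight $-(\lambda+r)-2$ and show that applying $e$ to it repeatedly does not annihilate it" presupposes that a $T$-type generator exists inside $L_n\otimes V_\lambda$, which is precisely the assertion to be proved; likewise "$\Omega$ is not semisimple on $L_n\otimes V_\lambda$ in general" is stated, not derived. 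To close the gap you must either (a) compute: solve $e\,u=0$ in the relevant weight spaces of $L_n\otimes V_\lambda$ (a recursion of the type $\alpha_{i+1,k}(n-i)=\alpha_{i,k+1}k(k-1+\cdots)$, as the paper later does for $\lambda=0$ — but note those later propositions \emph{assume} Enright's decomposition, so they cannot be quoted here) and show the kernel in weight $-(\lambda+r)-2$ is one-dimensional, or exhibit a weight vector $a$ with $(\Omega-c)^2a=0$ and $(\Omega-c)a\neq 0$; or (b) give a structural argument (projectivity of $T_{\lambda+r}$, or an adjunction/Hom computation ruling out a splitting), as in Enright's original proof. Until one of these is carried out for all $r\in I'$ and for general integral $\lambda$ (not just $\lambda=0$), the proposal is a correct plan but not yet a proof.
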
   

In the setting of our paper, we begin by choosing the case $\lambda=0$ so that we have the decomposition 
\begin{align} 
L_n\otimes V_0\cong \bigoplus_{r\in I'} T_{r}\oplus\bigoplus_{s\in I'''} V_{s},   \label{tensorproductdecomposition}
\end{align}  
where 
\begin{align*}
I'&=\begin{cases}
\{1,3,5,\dots, n-4,n-2\} &\text{ if $n$ is odd}, \\
\{0,2,4,\dots, n-4,n-2\} &\text{ if $n$ is even}, \\
\end{cases} \\ 
I''&=\begin{cases} 
\{-n,-n+2,\dots,-3\} &\text{ if $n$ is odd}, \\
\{-n,-n+2,\dots, -2\} &\text{ if $n$ is even},
\end{cases} \\ 
I'''&=\begin{cases}
\{-1,n\} &\text{ if $n$ is odd},  \\
\{n\} &\text{ if $n$ is even}. 
\end{cases}
\end{align*}


\begin{definition} 
Let $c\in \mathbb{C}$ and let $M$ be an $\mathfrak{sl}_2$-module. Define 
\[
M[c]:= \{ a\in M: (\Omega-c)^i.a = 0 \mbox{ for some }i\in \mathbb{N}_{>0}\},  
\] 
the {\em $c$-generalized eigenspace} of the Casimir operator in $M$. We say $M$ has a 
{\em Casimir decomposition} if $M=\bigoplus_{c\in \mathbb{C}}M[c]$ as modules, and that $M$ has a 
{\em simple Casimir decomposition} if $(\Omega-c).M[c]=0$ for every $c\in \mathbb{C}$.   
\end{definition} 

Theorem~\ref{theorem:enright-casimir-decomposition} is proved in \cite{MR541329}: 
\begin{theorem}\label{theorem:enright-casimir-decomposition}
Let $M$ be an $\mathfrak{sl}_2$-weight module, where $e\in \mathfrak{sl}_2$ acts nilpotently. 
Then $M$ has a Casimir decomposition.  
\end{theorem}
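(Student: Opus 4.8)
The plan is to show that every weight vector of $M$ is killed by a polynomial in the central element $\Omega$ that splits into linear factors over $\mathbb C$; once that is in hand the decomposition $M=\bigoplus_c M[c]$ follows from elementary linear algebra for a single operator together with the fact that $\Omega$ commutes with $\mathfrak{sl}_2$. I would not try to reduce to highest weight vectors or to cyclic submodules: a weight module with $e$ locally nilpotent need not be generated by highest weight vectors (for instance $T_r$ is generated by the non–highest-weight vector $1_r$), and cyclic $\mathfrak{sl}_2$-submodules are typically infinite dimensional (e.g. $U(\mathfrak{sl}_2)\cdot 1=\mathbb C[x]$ inside $V_0$). The trick is instead a downward induction on the subspaces $\ker(e^{j})$.

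The one computation I would record first is a local statement. Fix $\mu\in\mathbb C$ and an integer $d\geq 1$, and put $c=(\mu+2d-2)(\mu+2d)$, which is exactly the scalar by which $\Omega$ acts on a highest weight vector of weight $\mu+2(d-1)$, since $\Omega=h^2+2h+4fe$ gives $\Omega v=(\nu^2+2\nu)v$ whenever $ev=0$ and $hv=\nu v$. Then
\[
(\Omega-c)\bigl(\ker(e^{d})\cap M_\mu\bigr)\subseteq \ker(e^{d-1})\cap M_\mu .
\]
Indeed, if $v\in M_\mu$ with $e^{d}v=0$, then $e^{d-1}v$ has weight $\mu+2(d-1)$ and is annihilated by $e$, so $\Omega(e^{d-1}v)=c\,e^{d-1}v$; centrality of $\Omega$ then gives $e^{d-1}\bigl((\Omega-c)v\bigr)=(\Omega-c)(e^{d-1}v)=0$, and $(\Omega-c)v$ still lies in $M_\mu$ because $\Omega$ commutes with $h$.

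Next I would iterate this. Let $m\in M_\mu$ be an arbitrary weight vector; since $e$ acts nilpotently there is $d\geq 1$ with $e^{d}m=0$. Setting $c_k=(\mu+2k-2)(\mu+2k)$ for $1\leq k\leq d$ and applying the local claim repeatedly — $(\Omega-c_d)$ pushes $m$ from $\ker(e^{d})$ into $\ker(e^{d-1})$, then $(\Omega-c_{d-1})$ into $\ker(e^{d-2})$, and so on down to $\ker(e^{0})=0$ — yields
\[
\prod_{k=1}^{d}(\Omega-c_k)\cdot m=0 .
\]
So $\Omega$ acts on the cyclic subspace $\mathbb C[\Omega]m$ with minimal polynomial dividing $\prod_k(t-c_k)$, which splits over $\mathbb C$; hence $m$ is a finite sum of generalized $\Omega$-eigenvectors, i.e. $m\in\sum_{c}M[c]$. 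Decomposing a general element of $M$ into weight vectors gives $M=\sum_{c\in\mathbb C}M[c]$.

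Finally I would check that this sum is direct and that each $M[c]$ is a submodule. Directness is the standard argument for a single operator: if $\sum_i v_i=0$ with $v_i\in M[c_i]$ and the $c_i$ pairwise distinct, then for fixed $k$ and $N$ large the operator $\prod_{i\neq k}(\Omega-c_i)^{N}$ annihilates every $v_i$ with $i\neq k$ while acting invertibly on $M[c_k]$ (on $M[c_k]$ each factor is the scalar $c_k-c_i$ plus a locally nilpotent operator), forcing $v_k=0$. And each $M[c]$ is $\mathfrak{sl}_2$-stable because $(\Omega-c)^i(xv)=x(\Omega-c)^iv$ for $x\in\mathfrak{sl}_2$. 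Thus $M=\bigoplus_{c\in\mathbb C}M[c]$ as $\mathfrak{sl}_2$-modules, which is the asserted Casimir decomposition. The only step that needs care is the downward induction producing the explicit split annihilating polynomial for $\Omega$; everything after that is formal.
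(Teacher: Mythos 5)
The paper offers no proof of this theorem: it simply attributes it to \cite{MR541329}, so there is no internal argument to compare against. Your proof is correct and self-contained, and it reconstructs what is essentially the classical argument one expects behind Enright's statement.

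The key lemma you isolate — that $(\Omega-c)$ with $c=(\mu+2d-2)(\mu+2d)$ maps $\ker(e^{d})\cap M_\mu$ into $\ker(e^{d-1})\cap M_\mu$ — is the heart of the matter, and your verification is right: $e^{d-1}v$ is annihilated by $e$ and has weight $\mu+2(d-1)$, so the formula $\Omega=h^2+2h+4fe$ forces $\Omega(e^{d-1}v)=c\,e^{d-1}v$, and centrality of $\Omega$ turns that into $e^{d-1}((\Omega-c)v)=0$ while $\Omega$ commuting with $h$ keeps $(\Omega-c)v$ in $M_\mu$. The downward induction then produces the split annihilating polynomial $\prod_{k=1}^{d}(\Omega-c_k)$ for each weight vector, and from there the primary decomposition of the finite-dimensional cyclic $\mathbb C[\Omega]$-module $\mathbb C[\Omega]m$ gives $m\in\sum_c M[c]$. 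Two small remarks, neither of which affects correctness. First, the $c_k$ need not be pairwise distinct (for integral $\mu$ one can have $c_k=c_j$ with $k\ne j$), but your argument never assumes this — the polynomial still splits, which is all you use. Second, in the directness step what you actually need is that $(\Omega-c_i)$ is \emph{injective} on $M[c_k]$ for $c_i\ne c_k$ (it need not be surjective on all of $M[c_k]$); this follows exactly as you indicate, since a vector in $M[c_k]$ killed by $(\Omega-c_i)$ would be killed by the nonzero scalar $(c_i-c_k)^N$. You are also right to flag that reducing to highest weight vectors or to cyclic $\mathfrak{sl}_2$-submodules would be a dead end here; working with $\ker(e^j)\cap M_\mu$ is the correct filtration.
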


\begin{lemma} 
If an $\mathfrak{sl}_2$-module $M$ has a Casimir decomposition, then $(\Omega-c)^2.M[c]=0$. 
\end{lemma}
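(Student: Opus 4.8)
\emph{Proof plan.} It suffices to prove $(\Omega-c)^2$ annihilates $M[c]$ for each $c$, so I would replace $M$ by the submodule $M[c]$ (still a weight module on which $e$ acts locally nilpotently, as in the setting of Theorem~\ref{theorem:enright-casimir-decomposition}) and show $N^2=0$ for $N:=\Omega-c$. Since $\Omega=h^2+2h+4fe$ is central, $N$ commutes with the $\mathfrak{sl}_2$-action, preserves each weight space $M_\mu$, and is locally nilpotent on $M[c]$ by the definition of the generalized eigenspace. On $M_\mu$ one has $\Omega=\mu^2+2\mu+4fe$, hence $4fe|_{M_\mu}=N|_{M_\mu}-\beta_\mu$ with $\beta_\mu:=\mu^2+2\mu-c=(\mu+1)^2-(c+1)$. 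The first thing to extract is this: if $0\neq v\in M_\mu$ and $ev=0$, then $4fev=0$, so $Nv=\beta_\mu v$ and $N^kv=\beta_\mu^kv$ for all $k$; local nilpotence of $N$ then forces $\beta_\mu=0$ and $Nv=0$. Consequently every highest weight vector of $M[c]$ lies in $M_{\mu_+}\oplus M_{\mu_-}$, where $\mu_\pm=-1\pm\sqrt{c+1}$ are the (at most two) roots of $\beta_\mu=0$, and $N$ vanishes identically on $\ker e$.

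Next I would fix a coset $\Lambda\in\mathbb{C}/2\mathbb{Z}$, set $M_\Lambda:=\bigoplus_{\mu\in\Lambda}M_\mu$ (a submodule) and $\mathfrak m_\Lambda:=\{\mu_+,\mu_-\}\cap\Lambda$. A ``climbing'' argument — apply $e$ repeatedly to a nonzero weight vector until the result is killed, producing a nonzero highest weight vector at a weight $\geq$ the original one — yields: (a) if $\mathfrak m_\Lambda=\emptyset$ then $M_\Lambda=0$; and (b) $M_\mu=0$ for every $\mu\in\Lambda$ with $\mu>\max\mathfrak m_\Lambda$, so in particular $M_{\max\mathfrak m_\Lambda}\subseteq\ker e$. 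Using that $N$ commutes with $e$ and that $\ker e\cap M_\mu=0$ whenever $\mu\notin\{\mu_+,\mu_-\}$ (first paragraph), I would then run two downward inductions on $\mu\in\Lambda$. First, $N|_{M_\mu}=0$ for every $\mu>\min\mathfrak m_\Lambda$: the base case $\mu=\max\mathfrak m_\Lambda$ is (b), and in the inductive step $ev\in M_{\mu+2}$ with $N(ev)=0$, hence $e(Nv)=0$, so $Nv\in\ker e\cap M_\mu=0$. Second, $N^2|_{M_\mu}=0$ for every $\mu\leq\min\mathfrak m_\Lambda$: in the base case $\mu=\min\mathfrak m_\Lambda$ one has $N(ev)=0$ by the first induction, so $Nv\in\ker e$ and hence $N^2v=0$; in the inductive step $N^2(ev)=0$, so $e(N^2v)=0$, whence $N^2v\in\ker e\cap M_\mu=0$. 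Since $M[c]=\bigoplus_\Lambda M_\Lambda$, combining the two gives $N^2=0$, i.e. $(\Omega-c)^2 M[c]=0$.

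The main obstacle is that both naive expectations fail: local nilpotence of $N$ together with local nilpotence of $e$ does \emph{not} force $N=0$, and a single induction on the $e$-nilpotency degree $k$ of a vector only produces $N^{k+1}v=0$ with $k$ unbounded over $M[c]$ (the indecomposable projective $T_r$ has $N\neq0$, $N^2=0$, yet contains elements of arbitrarily large $e$-degree). The resolution is to use the Casimir value to pin highest weight vectors to the two weights $\mu_\pm$ only; this rigidifies each $M_\Lambda$ enough that the position of a weight relative to $\min\mathfrak m_\Lambda$ dictates whether $N$, or merely $N^2$, annihilates it. (In the narrower setting of $\mathcal I_{\mathfrak g}(\mathfrak{sl}_2)$ one can instead invoke the classification of indecomposables together with the defining ideal $\mathfrak M_r$ of $T_r$, on which $(\Omega-r^2-2r)^2=0$ by construction, while on each Verma summand $\Omega-c$ is a scalar that must vanish once it is locally nilpotent.)
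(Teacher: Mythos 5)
The paper states this lemma without any proof (it is in effect deferred to Enright \cite{MR541329}), so there is no in-paper argument to compare against; judged on its own, your proof is correct. You rightly import the standing hypotheses of Theorem~\ref{theorem:enright-casimir-decomposition} --- $M$ a weight module on which $e$ acts locally nilpotently --- and this is not cosmetic: for an arbitrary module the bare statement fails (e.g.\ $U(\mathfrak{sl}_2)/(\Omega-c)^3U(\mathfrak{sl}_2)$ is its own $c$-generalized eigenspace, yet $(\Omega-c)^2$ does not annihilate it, since $U(\mathfrak{sl}_2)$ is a domain), so the lemma must be read in that setting. Your two pivotal observations check out: because $\Omega=h^2+2h+4fe$, the central operator $N=\Omega-c$ acts on $\ker e\cap M_\mu$ by the scalar $(\mu+1)^2-(c+1)$, and local nilpotence of $N$ on $M[c]$ forces that scalar to vanish, so $N$ kills $\ker e$ and confines the weights of $\ker e$ to $\mu_\pm=-1\pm\sqrt{c+1}$; the climbing argument then empties every weight space above $\max\mathfrak{m}_\Lambda$ in each coset $\Lambda\in\mathbb{C}/2\mathbb{Z}$, and your two downward inductions (first $N=0$ on weights above $\min\mathfrak{m}_\Lambda$, then $N^2=0$ at and below it) are sound because $N$, $N^2$ commute with $e$ and $\ker e\cap M_\mu=0$ off $\{\mu_+,\mu_-\}$; since $M[c]$ is again a weight module with $e$ locally nilpotent, summing over cosets finishes the argument. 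Your diagnosis of why the naive induction on the $e$-nilpotency degree cannot work (the degree is unbounded on $T_r$ even though $N^2=0$ there) is also apt. The only caution concerns your closing parenthesis: deriving the lemma from the classification of indecomposables in $\mathcal{I}(\mathfrak{sl}_2)$ both narrows the statement (it would not cover weight modules with $e$ locally nilpotent on which $f$ fails to be injective, such as the $L_n$) and risks circularity in Enright's development, where the Casimir statement feeds into the structure theory; so the direct weight-by-weight argument you give is the preferable one.
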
 

Enright also gives the following definition:   
\begin{definition}\label{def:enright-category-Ig-sl2-c} 
For a complex Lie algebra $\mathfrak g$ containing $\mathfrak{sl}_2$, the category $\mathcal I_{\mathfrak g}(\mathfrak{sl}_2)[c]$ is the collection of all $\mathfrak{sl}_2$-modules $M$ in $\mathcal I_{\mathfrak g}(\mathfrak{sl}_2)$ such that $M=M[c]$.  
\end{definition}

Definition~\ref{def:enright-category-Ig-sl2-c} implies 
for $n\geq -1$, 
the modules $V_{-n-2}$, $V_n$ and $T_n$ are in the category $\mathcal I_{\mathfrak g}(\mathfrak{sl}_2)[n(n+2)]$.

We now refer to \cite{Benes} for the decomposition of tensor product of modules. 
Using the above bases for $L_n$ and $V_0$,  
we will construct a basis for $T_r$ and $V_s$. 
That is, we know that $\bigoplus_{r\in I'} T_r \oplus \bigoplus_{s\in I'''} V_s \cong L_n\otimes V_0$ has basis 
$\{v_i\otimes w_k: 0\leq i\leq n, k\geq 0 \}$. 
After constructing a basis for $T_r$ and $V_s$, we replace the basis $\{v_i \otimes w_k\}$ 
with the tensor product $M_n\boxtimes N_k$ of modules, 
where $M_n$ is an object in category $\mathcal L_n$ and $N_k$ is an object in $\mathcal V_0$.

The Casimir $\Omega=4fe+h^2+2h$ acts on $v_{\lambda}\in V_{\lambda}$ by 
$\Omega. v_{\lambda} = (4fe+h^2+2h)v_{\lambda}=(\lambda^2+2\lambda)v_{\lambda}=\lambda(\lambda+2)v_{\lambda}$. 
Thus $(\Omega-\lambda(\lambda+2))v_{\lambda}=0$. 
Now, for a fixed $s$ and a fixed $n$, 
we will find the highest weight vector $u_s$ of $V_s$ with weight $s$
in terms of a sum of tensor products of vectors in $ L_n\otimes V_0$.   If the coefficients of the sum are positive integers, one can lift this sum to a sum of tensor products of modules $M_n\boxtimes N_k\in \mathcal L_n\boxtimes  \mathcal{V}_0$.  
\begin{proposition}\label{proposition:p-j-for-Verma-mod-cat}  
For $s\in I'$ set $r=-s-2$. For $0\leq i\leq \frac{n+r}{2}$, set
\begin{equation}
p_i :=4^{\frac{n+r}{2}-i} \dfrac{n+r+2}{n+r-2i+2} \prod_{j=0}^{i-1} (n+r-2j)^2 \prod_{\nu=i}^{\frac{n+r-2}{2}} (n-\nu), 
\end{equation}
where if $i=0$, then 
\[ 
\prod_{j=0}^{i-1} (n+r-2j)^2: = 1. 
\]  
Then $p_i$ is a positive integer, and  
$u_n:=v_0\otimes w_0$ is a highest weight vector in $L_n\otimes V_0$ of highest weight $n$ and 
 \begin{equation}
u_s
:= \sum_{j=0}^{\frac{n+r}{2}}p_j v_j \otimes w_{\frac{n+r+2}{2}-j },  
\end{equation}
a highest weight vector of weight $s$. 
Moreover  
 \begin{equation}
u_{-s-2}=f^{s+1}u_s
 := 
\sum_{j=0}^{\frac{n+s}{2} } p_jv_j \otimes w_{\frac{n+s+2}{2} - j }, 
\end{equation}
a highest weight vector of weight $-s-2$.
\end{proposition}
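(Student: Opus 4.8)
The plan is to verify, in order: the weight statements; the highest‑weight (i.e.\ $e$‑annihilated) property of $u_s$ and of $u_n$; the positivity and integrality of the $p_i$; and the identity $u_{-s-2}=f^{s+1}u_s$ with its explicit expansion. Throughout, $\mathfrak{sl}_2$ acts on $L_n\otimes V_0$ by the coproduct, $x.(v\otimes w)=(x.v)\otimes w+v\otimes(x.w)$. The weight statements are then immediate: with $M:=\tfrac{n+r+2}{2}$, each summand $v_j\otimes w_{M-j}$ of $u_s$ has $h$‑weight $(n-2j)-2(M-j)=n-2M=-r-2=s$; replacing $M$ by $\tfrac{n+s+2}{2}$ gives weight $-s-2$ for every summand of $u_{-s-2}$; and $h.(v_0\otimes w_0)=n(v_0\otimes w_0)$. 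Likewise $e.(v_0\otimes w_0)=(n+1)v_{-1}\otimes w_0+v_0\otimes(e.w_0)=0$ because $v_{-1}=0$ and $e.w_0=0$, so $u_n$ is a highest weight vector of weight $n$.

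For the highest‑weight property of $u_s$, set $P:=\tfrac{n+r}{2}$, so $M=P+1$. Using $e.v_j=(n-j+1)v_{j-1}$ and $e.w_{M-j}=-(M-j)(M-j-1)w_{M-j-1}$, applying the coproduct to $u_s$ and reindexing the two resulting sums gives
\[
e.u_s=\sum_{i=0}^{P-1}\bigl(p_{i+1}(n-i)-p_i(M-i)(M-i-1)\bigr)\,v_i\otimes w_{M-i-1},
\]
the two boundary terms vanishing because $v_{-1}=0$ and $(M-P)(M-P-1)=1\cdot0=0$. Hence $e.u_s=0$ is equivalent to the recursion $p_{i+1}(n-i)=p_i(M-i)(M-i-1)$, i.e.\ $p_i/p_{i-1}=(P-i+2)(P-i+1)/(n-i+1)$. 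Rewriting $\tfrac{n+r+2}{n+r-2i+2}=\tfrac{P+1}{P+1-i}$, $\prod_{j=0}^{i-1}(n+r-2j)^2=4^{i}\bigl(P!/(P-i)!\bigr)^2$ and $\prod_{\nu=i}^{P-1}(n-\nu)=(n-i)!/(n-P)!$, one checks in a couple of lines that the displayed $p_i$ has base value $p_0=4^{P}\prod_{\nu=0}^{P-1}(n-\nu)$ and satisfies this ratio, so $e.u_s=0$. The same three rewritings, together with $\binom{P}{i}\tfrac{P+1}{P+1-i}=\binom{P+1}{i}$, put $p_i$ in the form
\[
p_i=4^{P}\,(i!)^{2}\,(P-i)!\,\binom{P}{i}\,\binom{P+1}{i}\,\binom{n-i}{P-i},
\]
a product of nonnegative integers; all factors are strictly positive because $s\in I'$ forces $0\le s\le n-2$, whence $0\le P=\tfrac{n-s-2}{2}$ and $0<n-P=\tfrac{n+s+2}{2}$, so $0\le i\le P<n$. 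Thus $p_i\in\mathbb Z_{>0}$.

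Finally, since $h.u_s=s\,u_s$ and $e.u_s=0$, the commutator identity $[e,f^{k}]=kf^{k-1}(h-k+1)$ at $k=s+1$ gives $e.(f^{s+1}u_s)=(s+1)f^{s}(h-s).u_s=0$, so $f^{s+1}u_s$ is a highest weight vector of weight $s-2(s+1)=-s-2$; it is nonzero because applying $1\otimes f$ to the $j=0$ summand of $u_s$ all $s+1$ times contributes $p_0\,v_0\otimes w_{(n+s+2)/2}$ with $p_0\neq0$. To obtain the explicit form, expand $f^{s+1}=\sum_{b=0}^{s+1}\binom{s+1}{b}(f\otimes1)^{s+1-b}(1\otimes f)^{b}$ (the two summands commute), apply it to each $v_j\otimes w_{M-j}$ via $f^{a}.v_j=\tfrac{(j+a)!}{j!}v_{j+a}$ and $f^{b}.w_k=w_{k+b}$, and collect the coefficient of each $v_J\otimes w_{(n+s+2)/2-J}$; identifying the resulting finite sums with the quantities recorded in the statement (again positive integers, checked as above) is the most delicate bookkeeping. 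Alternatively, Proposition~\ref{prop:direct-sum-decomp-Enright} (with $\lambda=0$) together with the known singular‑vector structure of the $T_r$ and $V_\lambda$ shows that the space of weight‑$(-s-2)$ highest weight vectors of $L_n\otimes V_0$ is one‑dimensional --- carried entirely by the summand $T_s$ --- so that $f^{s+1}u_s$ equals the displayed sum once one matches the single coefficient of $v_0\otimes w_{(n+s+2)/2}$, namely $p_0$. I expect this last step --- the identification of the expanded coefficients, equivalently the bookkeeping of how the iterated coproduct spreads weight between the two tensor factors --- to be the main obstacle; everything else is routine.
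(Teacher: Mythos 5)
Your weight computations, the check that $e.u_n=0$, the derivation of the recursion $p_{i+1}(n-i)=p_i(M-i)(M-i-1)$ from $e.u_s=0$, the verification that the displayed formula for $p_i$ satisfies it, and the integrality/positivity argument via the factorization $p_i=4^{P}(i!)^{2}(P-i)!\binom{P}{i}\binom{P+1}{i}\binom{n-i}{P-i}$ (with $P=\frac{n+r}{2}$) are all correct and follow essentially the same route as the paper; your binomial‑coefficient factorization is in fact cleaner than the paper's justification of integrality. Likewise the argument, via $[e,f^{s+1}]=(s+1)f^{s}(h-s)$, that $f^{s+1}u_s$ is a nonzero singular vector of weight $-s-2$ is correct.

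The step you flag at the end, though, is not a bookkeeping obstacle: the claimed identity $f^{s+1}u_s=\sum_{j=0}^{(n+s)/2}p_j\,v_j\otimes w_{(n+s+2)/2-j}$ with the \emph{same} $p_j$ is false. Applying $e$ to $f^{s+1}u_s$ exactly as you did to $u_s$, the coefficients $q_j$ of $f^{s+1}u_s$ obey $q_{i+1}(n-i)=q_i\,\frac{(n+s-2i+2)(n+s-2i)}{4}$, whereas your $p_j$ satisfy $p_{i+1}(n-i)=p_i\,\frac{(n-s-2i)(n-s-2i-2)}{4}$; since $q_0=p_0$ but these ratios differ already at $i=0$ for every $s\neq -1$, one has $q_i\neq p_i$ for $i\ge 1$. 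Concretely, take $n=4$, $s=0$ (so $r=-2$, $P=1$): the formula gives $p_0=16$, $p_1=8$, hence $u_0=16\,v_0\otimes w_2+8\,v_1\otimes w_1$, yet $fu_0=16\,v_0\otimes w_3+24\,v_1\otimes w_2+16\,v_2\otimes w_1$, whose $v_1\otimes w_2$ coefficient is $24\neq p_1$; moreover the formula for $p_2$ contains the factor $(n+r-2P)^2=0$, so $p_2=0$, while the actual coefficient of $v_2\otimes w_1$ is $16$. Your fallback via Proposition~\ref{prop:direct-sum-decomp-Enright} correctly shows the singular weight‑$(-s-2)$ subspace is one‑dimensional, hence pins down $f^{s+1}u_s$ up to scale, but it cannot identify it with the $p_j$‑sum, because the $p_j$‑sum is not $e$‑annihilated. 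The paper's own proof contains exactly this misstep: after computing the $q_j$ recursion it asserts that $q_0=p_0$ forces $q_i=p_i$ for all $i$, which does not follow from the two distinct recursions. What does hold, and what the paper separately establishes via the observation that $f$ sends a vector with nonnegative integer coefficients to another such, is that the coefficients of $f^{s+1}u_s$ are positive integers; that weaker and correct statement (with new coefficients $q_j$) is what you should prove in place of the literal coefficient identity.
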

\begin{proof}  When $s=n$, where $s\in I'''$, it is clear that $u_n=u_s=v_0\otimes w_0$ is a highest weight vector of weight $s$. Now let $s \in I'$.   By the Decomposition Theorem \eqnref{tensorproductdecomposition} and the structure of the module $T_s$, there is only one highest weight vector of weight $s$ up to a scalar. 
As  
$$
h.(v_i\otimes w_k)
= (n-2i-2k)v_i\otimes w_k,
$$
the highest weight vector $u_s$ of $V_s$ is a linear combination of $v_i\otimes w_k$ satisfying $n-2i-2k=s$. 
We will determine coefficients  $ \alpha_{ik}\in \mathbb C$ using the equation 
\[ 
u_s = \sum_{\stackrel{0\leq i\leq n,k\geq 0}{n-2i-2k=s}} \alpha_{ik} v_i\otimes w_k, 
\mbox{ where } V_s = \mathbb{C}[f]u_s. 
\] 
Since 
\[
\begin{aligned}
0 &= eu_s = \sum_{\stackrel{0\leq i\leq n, k\geq 0}{n-2i-2k=s}} \alpha_{ik} e(v_i\otimes w_k)  
= \sum_{\stackrel{0\leq i\leq n, k\geq 0}{n-2i-2k=s}} \alpha_{ik} (n-i+1)v_{i-1}\otimes w_k  \\ 
&\hspace{2cm}+ \sum_{\stackrel{0\leq i\leq n, k\geq 0}{n-2i-2k=s}} \alpha_{ik}(-k(k-1))v_i\otimes w_{k-1} \\ 
&= \sum_{\stackrel{-1\leq i'\leq n-1, k\geq 0}{n-2i'-2k-2=s}} \alpha_{i'+1,k} (n-i')v_{i'}\otimes w_k 
+   \sum_{\stackrel{0\leq i\leq n, k'\geq -1}{n-2i-2k'-2=s}} \alpha_{i,k'+1}(-(k'+1)k')v_i\otimes w_{k'}  \\ 
&\hspace{2cm}\mbox{ since } i-1=i',  k-1 = k'   \\ 
&= \sum_{\stackrel{0\leq i'\leq n-1, k\geq 0}{n-2i'-2k-2=s}} \alpha_{i'+1,k} (n-i')v_{i'}\otimes w_k 
+   \sum_{\stackrel{0\leq i\leq n, k'\geq 0}{n-2i-2k'-2=s}} \alpha_{i,k'+1}
\big(-(k'+1)k' \big)v_i\otimes w_{k'}  \\ 
&\hspace{2cm}\mbox{ since } v_{-1} =  w_{-1} = 0 \\ 
&= \sum_{\stackrel{0\leq i\leq n-1, k\geq 0}{n-2i-2k-2=s}} 
\big(\alpha_{i+1,k} (n-i)  - \alpha_{i,k+1}(k+1)k \big)  v_i\otimes w_{k}  
+ \alpha_{n,k+1} \big(-(k+1)k \big) v_n\otimes w_{k},   
\end{aligned}
\] 
we have $\alpha_{n,k+1}=0$ for all $k\geq 0$ and 
$\alpha_{i+1,k} (n-i)  - \alpha_{i,k+1}(k+1)k=0$ for each $0\leq i\leq n-1$, $k\geq 0$, and 
$n-2i-2k-2=s$. 
The latter equation could be rewritten as 
\begin{equation}\label{equation:relation-alpha-ij}
\alpha_{i+1,k}  = \dfrac{(k+1)k}{n-i} \alpha_{i,k+1} \mbox{ where } n-2i-2k-2=s,  0\leq i\leq n-1, \mbox{ and } k\geq 0.   
\end{equation}

Furthermore, since $k = (n-2i-s-2)/2$, we can rewrite Equation~\eqref{equation:relation-alpha-ij} as:  
\begin{align*}
\alpha_{i+1,(n-s  -2i-2)/2} &= \dfrac{(n-s -2i)(n-s  -2i-2)}{4(n-i)} \alpha_{i,(n-s  -2i)/2} \\  &= \dfrac{n-i-1}{4^{i+1}(n-s)(n-s-2i-2)} \prod_{j=0}^{i+1} \dfrac{(n-s-2j)^2}{n-j}   \alpha_{0,(n-s )/2}, \\ 
\end{align*}
where $0\leq i\leq n-1$, 
or as: 
\begin{align*}
\alpha_{i,(n-s  -2i)/2} &= \dfrac{(n-s -2i+2)(n-s  -2i)}{4(n-i+1)} \alpha_{i-1,(n-s  -2i+2)/2}   \\ 
&= \dfrac{(n-s-2i)}{4^i(n-s)} \prod_{j=0}^{i-1}\dfrac{(n-s-2j)^2}{n-j}\alpha_{0,(n-s)/2},  \\ 
\end{align*}
where $1\leq i\leq n$. 
If we set $r=-s-2$, 
then for $1\leq i\leq n$, we get 
\begin{align*}
\alpha_{i,(n+r  -2i+2)/2} 
&= \dfrac{(n+r+2)}{4^i(n+r-2i+2)} \prod_{j=0}^{i-1} 
\dfrac{(n+r-2j)^2 }{(n-j)} \alpha_{0,(n+r+2 )/2}.   \\
\end{align*}  
Note that since $\frac{n+r-2i}{2}=k\geq 0$, we have $\frac{n+r}{2}\geq i$. 
As a consequence, we assume $\alpha_{0,(n+r+2 )/2}  \neq 0$ in order to have all the coefficients $\alpha_{i,(n+r  -2i+2)/2}\neq 0$ for  
$i\leq \frac{n+r}{2}$,    
%
and we also assume $\alpha_{i,(n+r  -2i+2)/2}=0$ for $i\geq \frac{n+r+2}{2}$.  
Moreover if we set the initial term as 
$$
\alpha_{0,(n+r+2 )/2} =4^{\frac{n+r}{2}} \prod_{\nu=\frac{n-r+2}{2}}^n \nu, 
%
$$
then $\alpha_{i,(n+r  -2i+2)/2} $ will be positive integers for all $0 \leq i\leq \frac{n+r}{2}$. 
Hence we may take the highest weight vector $u_s$ to be  
\begin{equation*}
u_s    
= \sum_{j=0}^{\frac{n+r}{2}} p_j v_j \otimes w_{\frac{n+r+2}{2}-j}, 
\end{equation*}
where $p_i$ is the positive integer   
given by 
\begin{equation*}
p_i=4^{\frac{n+r}{2}-i} \dfrac{n+r+2}{n+r-2i+2} \prod_{j=0}^{i-1} (n+r-2j)^2 \prod_{\nu=i}^{\frac{n+r-2}{2}} (n-\nu),  
\end{equation*}
where $0 \leq i\leq \frac{n+r}{2}$ and $0\leq s=-r-2 \leq n-2$,  
and if $i=0$, we define 
\[ 
\prod_{j=0}^{i-1} (n+r-2j)^2 := 1. 
\] 
 Now observe that if 
 \begin{equation*}
 w=\sum_{\stackrel{0\leq i\leq n,k\geq 0}{n-2i-2k=s}} \gamma_{ik} v_i\otimes w_k, 
 \end{equation*}
a weight vector with nonnegative coefficients $\gamma_{ik}\in\mathbb N$, then setting $v_{-1}=0$ and $w_{-1}=0$, we get
 \begin{align*}
 fw&=\sum_{\stackrel{0\leq i\leq n,k\geq 0}{n-2i-2k=s}} \gamma_{ik} (fv_i\otimes w_k+v_i\otimes fw_k)  \\
 &=\sum_{\stackrel{0\leq i\leq n,k\geq 0}{n-2i-2k=s}} \gamma_{ik} (i+1)v_{i+1}\otimes w_k+\sum_{\stackrel{0\leq i\leq n,k\geq 0}{n-2i-2k=s}}\gamma_{ik} v_i\otimes w_{k+1}  \\
 &=\sum_{\stackrel{0\leq i\leq n,k\geq 0}{n-2i-2k+2=s}} (i \gamma_{i-1,k}  + \gamma_{i,k-1} )v_i\otimes w_{k}, 
 \end{align*}
which also has coefficients $ i \gamma_{i-1,k}  + \gamma_{i,k-1}$ as non-negative integers.  Hence by induction, every vector $f^lu_s$ has non-negative coefficients in front of the summands $v_i\otimes w_{k}$.
In particular,  
 \begin{equation}
u_{-s-2}=f^{s+1}u_s  :=\sum_{j=0}^{\frac{n+s}{2}} q_jv_j \otimes w_{\frac{n+s+2}{2} - j}, 
\end{equation}
where $q_j$ are non-negative integers and $n-2i-2k-2=-s-2$.  The fact that the coefficients above are nonnegative integers tells us that we are dealing with an additive category.

Now for the comultiplication 
$\Delta: U(\mathfrak{sl}_2) \to U(\mathfrak{sl}_2) \otimes U(\mathfrak{sl}_2)$ given by $\Delta(x)=x\otimes 1+1\otimes x$, we get 
$$
\Delta(f^{s+1})=\Delta(f)^{s+1}=(f\otimes 1+1\otimes f)^{s+1}=\sum_{k=0}^{s+1}\binom{s+1}{k}f^k\otimes f^{s+1-k}
$$
and thus 
 \begin{align*}
u_{-s-2}& 
= f^{s+1}u_s=\sum_{k=0}^{s+1}\binom{s+1}{k}f^k\otimes f^{s+1-k}
	\left(  
		\sum_{j=0}^{\frac{n+r}{2}} p_j v_j \otimes w_{\frac{n+r+2}{2} -j} 
	\right) \\
&= p_0 v_0\otimes  f^{s+1} w_{\frac{n+r+2}{2}}+ q_1 v_1\otimes w_{\frac{n+s}{2}}+\cdots +q_{\frac{n+s}{2}}v_{\frac{n+s}{2}}\otimes w_{1} \\ 
&= p_0 v_0\otimes  w_{\frac{n+s+2}{2}}+ q_1 v_1\otimes w_{\frac{n+s}{2}}+\cdots +q_{\frac{n+s}{2}}v_{\frac{n+s}{2}}\otimes w_{1}. 
\end{align*}
Given the commutator $[e,f^{s+1}]=(s+1)f^s(h-s)$, one of course has $eu_{-s-2}=0$.  This implies 
 \begin{align*}
e u_{-s-2} &= p_0v_0\otimes ew_{\frac{n+s+2}{2}}+\cdots +q_i(ev_i\otimes w_{\frac{n+s-2i+2}{2}}+v_i\otimes ew_{\frac{n+s-2i+2}{2}}) \\
&\quad\quad +\cdots +q_{\frac{n+s}{2}}ev_{\frac{n+s}{2}}\otimes w_{1} \\
&= p_0v_0\otimes w_{\frac{n+s+2}{2}}+\cdots +q_i(n-i+1)v_{i-1}\otimes w_{\frac{n+s-2i+2}{2}} \\ 
&\quad\quad - q_i\left(\frac{n+s-2i+2}{2}\right)\left(\frac{n+s-2i}{2}\right)v_i\otimes w_{\frac{n+s-2i}{2}} 
+ q_{i+1}(n-i)v_{i}\otimes w_{\frac{n+s-2i}{2}} \\ 
&\quad\quad - q_{i+1}\left(\frac{n+s-2i}{2}\right)\left(\frac{n+s-2i-2}{2}\right)v_{i+1}\otimes w_{\frac{n+s-2i-2}{2}} \\
&\quad\quad +\cdots +q_{\frac{n+s}{2}}\left(\frac{n-s+2}{2}\right)v_{\frac{n+s-2}{2}}\otimes w_{1}, 
\end{align*}
which gives us 
\begin{align*}
q_{i+1}(n-i)=q_i\left(\frac{n+s-2i+2}{2}\right)\left(\frac{n+s-2i}{2}\right)=q_i(k+1)k
\end{align*}
since $k=\frac{n+s-2i}{2}$.   Since $q_0=p_0$, we have $q_i=p_i$ for all $i$.

\end{proof}

%
%
\begin{proposition}  
For $s\in I'$, 
there is a vector in the tensor product $L_n\otimes V_0$ of the form
 \begin{equation}
a_{-s-2}:= \sum_{j=0}^{\frac{n+s}{2}} q_j v_j\otimes w_{\frac{n+s+2}{2} - j}  
\end{equation}
of weight $-s-2$ 
which generates the projective submodule $T_s$, where for $0\leq i\leq (n+s)/2$, $q_i$'s are positive integers satisfying the relation 
\begin{equation}\label{equation:recursion-equation-basis-coeff-betas}
\begin{split}
0 &=q_{i-2 } \big(i-1\big) i k \big(k+1\big)^2 \big(k+2 \big)    
	-  q_{i-1 } \Big( 
         i k (k+1)  ( 2 i( n +2 - i) - (n + 2)  -2k^2 )         
	\Big) \\ 
	&\hspace{4mm}	 +  q_{i} \Big(  
			(i(n-i+1) - k(k-1))^2     
		  -   ik (k+1) (n-i+1)        
		  -   (i+1) (k-1) k  (n-i)  \Big)  \\ 
	&\hspace{4mm}	  +  q_{i+1 } (n-i) \Big( n+2i(n-i) -2(k-1)^2  \Big)       
	 	  + q_{i+2 } \big(n-i-1 \big) \big(n-i \big),  
\end{split}
\end{equation}
where $k=(n-2i+s+2)/2$.
\end{proposition}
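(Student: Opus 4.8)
The plan is to realize $a_{-s-2}$ as a generator of the summand $T_s$ occurring in Enright's decomposition \eqref{tensorproductdecomposition}, to use the Casimir operator $\Omega$ both to carve $T_s$ out of $L_n\otimes V_0$ and to describe its weight $-s-2$ space, and finally to read \eqref{equation:recursion-equation-basis-coeff-betas} off as the coordinate form of the identity $(\Delta(\Omega)-s(s+2))^{2}\big|_{T_s}=0$, where $\Delta$ is the coproduct making $L_n\otimes V_0$ into an $\mathfrak{sl}_2$-module. First I would observe that $e$ acts locally nilpotently on $L_n\otimes V_0$, that $\Omega$ acts by a scalar on $L_n$ and on every Verma, and that among the summands in \eqref{tensorproductdecomposition} only $T_s$ lies in the generalized Casimir eigenspace for $s(s+2)$ (using $I'\cap I'''=\emptyset$ and $-s-2\notin I'''$); hence $T_s=\ker\bigl(\Delta(\Omega)-s(s+2)\bigr)^{2}$ inside $L_n\otimes V_0$, so the weight space $W:=(T_s)_{-s-2}$ is the solution set, inside $(L_n\otimes V_0)_{-s-2}$, of a linear system with rational coefficients.

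Next I would analyze $W$. Its dimension is $2$, since the composition factors of $T_s$ are $L_s$ once and $V_{-s-2}$ twice while $L_s$ carries no weight $-s-2$; a basis is $\{\,1_s,\ u_{-s-2}\,\}$, where $1_s$ is Enright's cyclic generator of $T_s$ (weight $-s-2$, not in $V_s$) and $u_{-s-2}=f^{s+1}u_s$ is, by Proposition~\ref{proposition:p-j-for-Verma-mod-cat}, a nonzero highest weight vector of weight $-s-2$ inside the sub-Verma $V_{-s-2}\subseteq\operatorname{rad}(V_s)\subseteq T_s$. Because $\Delta(\Omega)-s(s+2)$ kills the $\Omega$-eigenmodule $V_s$, it factors through $T_s/V_s\cong V_{-s-2}$; its image is a nonzero (by Enright's non-split structure of $T_s$) submodule of $T_s$ that is a quotient of the simple $V_{-s-2}$, hence equals the sub-Verma $V_{-s-2}\subseteq V_s$. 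Restricting to weight $-s-2$ gives $\bigl(\Delta(\Omega)-s(s+2)\bigr)W=\mathbb C\,u_{-s-2}$ with kernel $\mathbb C\,u_{-s-2}$, so $\bigl(\Delta(\Omega)-s(s+2)\bigr)a=c_a\,u_{-s-2}$ with $c_a\neq0$ for every $a\in W\smallsetminus\mathbb C\,u_{-s-2}$.

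I would then produce $a_{-s-2}$ and extract the recurrence. Choose $a^{(0)}\in W$ with rational coordinates and $a^{(0)}\notin\mathbb C\,u_{-s-2}$; applying $c_{a^{(0)}}^{-1}\bigl(\Delta(\Omega)-s(s+2)\bigr)$ recovers $u_{-s-2}$, hence $1_s$, hence all of $T_s=U(\mathfrak{sl}_2)\,1_s$, so $a^{(0)}$ — and likewise $a^{(0)}+t\,u_{-s-2}$ for any $t$ — generates $T_s$. As the $p_i$ are positive, for a large positive rational $t$ all coordinates of $a^{(0)}+t\,u_{-s-2}$ are positive, and clearing denominators yields a generator $a_{-s-2}=\sum_i q_i\,v_i\otimes w_{\frac{n+s+2}{2}-i}$ of $T_s$ with each $q_i\in\mathbb Z_{>0}$; the summand $v_{\frac{n+s+2}{2}}\otimes w_0$ does not occur because at $i=\frac{n+s+2}{2}$, where $k=\frac{n-2i+s+2}{2}=0$, the equation $\bigl(\Delta(\Omega)-s(s+2)\bigr)a_{-s-2}\in\mathbb C\,u_{-s-2}$ forces $(n-s)(n+s+2)\,q_{\frac{n+s+2}{2}}=0$. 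Finally, using $\Delta(\Omega)=\Omega\otimes1+1\otimes\Omega+2h\otimes h+4f\otimes e+4e\otimes f$ and the explicit $\mathfrak{sl}_2$-actions on the $v_i,w_k$, one checks that $\Delta(\Omega)-s(s+2)$ sends $\sum_i q_i\,v_i\otimes w_{\frac{n+s+2}{2}-i}$ to the vector whose $v_i\otimes w_{\frac{n+s+2}{2}-i}$-coordinate is $4(n-i)q_{i+1}+B_i q_i-4i(k+1)k\,q_{i-1}$ with $B_i=4\bigl(i(n-i+1)-k(k-1)\bigr)$ and $k=\frac{n-2i+s+2}{2}$; applying the operator once more and simplifying produces a five-term combination of $q_{i-2},\dots,q_{i+2}$ equal to $16$ times the right-hand side of \eqref{equation:recursion-equation-basis-coeff-betas}, which vanishes because $a_{-s-2}\in T_s$ and $(\Delta(\Omega)-s(s+2))^{2}$ annihilates every module in $\mathcal I_{\mathfrak g}(\mathfrak{sl}_2)[s(s+2)]$.

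I expect the main obstacle to be the structural input of the second step — identifying the Casimir generalized eigenspace with the single summand $T_s$ and showing that $\Delta(\Omega)-s(s+2)$ acts on $W$ exactly as the projection onto $\mathbb C\,u_{-s-2}$, which relies on Enright's explicit non-split description of $T_s$. After that, the only substantial work is the double-application bookkeeping of the last step, in particular recognizing the squared coefficient $\bigl(i(n-i+1)-k(k-1)\bigr)^{2}$ in \eqref{equation:recursion-equation-basis-coeff-betas} as the square of $B_i/4$ and matching the remaining four polynomial coefficients — routine but lengthy, and the likeliest source of computational slips.
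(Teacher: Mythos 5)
Your proposal is correct and takes essentially the same approach as the paper: the heart of both arguments is that $a_{-s-2}$ is a weight $-s-2$ solution of $(\Omega-s(s+2))^{2}a=0$ with $(\Omega-s(s+2))a\neq 0$, the five-term recurrence \eqref{equation:recursion-equation-basis-coeff-betas} is obtained by expanding the squared Casimir on the tensor coordinates $v_i\otimes w_k$ (with the same $B_i=4\bigl(i(n-i+1)-k(k-1)\bigr)$ and the same factor of $16$), and positivity is arranged by adding a large multiple of $u_{-s-2}$ whose coefficients $p_j$ are already known to be positive and then clearing denominators. Your write-up simply makes explicit the structural facts the paper invokes more tersely (that $T_s$ is the $s(s+2)$-generalized Casimir eigenspace of $L_n\otimes V_0$, that $(T_s)_{-s-2}$ is two-dimensional, and that $V_s$ is the unique maximal submodule of $T_s$, which is what guarantees any $a\notin\mathbb{C}\,u_{-s-2}$ generates).
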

\begin{proof}
We want to find a weight vector 
$a_{-s-2}$ 
of weight $-s-2$ such that 
$(\Omega-c)^2. a_{-s-2}=0$ but $(\Omega -c). a_{-s-2}\not=0$, where $c=s(s+2)$, since such an $a_{-s-2}$ would be a generator of $T_s$ as a $U(\mathfrak{sl}_2)$-module.  
Similar to the calculation for the highest weight vector of a Verma module, we have 
%
\[  
\begin{aligned}  
\Omega.(v_i\otimes w_k) &= (4fe+h^2+2h).(v_i\otimes w_k)   \\
&=  - 4k(k-1)(i+1)v_{i+1}\otimes w_{k-1}
+4(n-i+1) v_{i-1} \otimes w_{k+1} \\ 
&\quad   
+ \big(4(n-i+1)i + (n - 2i - 4k + 2)(n-2i) \big)v_i\otimes w_k.  \\
\end{aligned} 
\]  

Since $c_s = s(s+2)=(n-2i-2k)(n-2i-2k+2)= (n-2i)^2 - 4k(n-2i) + 4k^2 + 2(n-2i-2k)$ and 
$c_s = c_{-s-2}$, 
we obtain 
\[ 
\begin{aligned}
(\Omega - c_s )(v_i\otimes w_k )
&= 4 (i (n-i+1) - k(k-1)) v_i\otimes w_k - 4k(k-1)(i+1)v_{i+1}\otimes w_{k-1}  \\ 
 &\quad\quad + 4(n-i+1)v_{i-1}\otimes w_{k+1}.  \\ 
\end{aligned}
\] 

Now as 
\[ 
a_{-s-2} = \sum_{\stackrel{0\leq i\leq n, k\geq 0}{n-2i-2k=-s-2} } \beta_{ik}(v_i\otimes w_k)   
\]  
is a weight vector of weight $-s-2$ in the projective module $T_{-s-2}$, 
we want to show that there exist coefficients $\beta_{ik}$ that are integers and such that $a_{-s-2}$ is a generator of $T_{-s-2}$.  So as mentioned above we want $(\Omega - c_s).a_{-s-2}\neq 0$ but $(\Omega - c_s)^2.a_{-s-2}=0$.  First we have
\[  
\begin{aligned}  
(\Omega &- c_s).a_{-s-2}  \\  
	&=   \sum_{\stackrel{0\leq i\leq n, k\geq 0}{n-2i-2k=-s-2}} \beta_{ik} 
	\Big(4i(n-i+1)-4k(k-1) \Big) v_i\otimes w_k   \\ 
	 &\hspace{4mm} -   \sum_{\stackrel{0\leq i\leq n-1, k\geq 1}{n-2i-2k=-s-2}}\beta_{ik}(4k)(k-1)(i+1) v_{i+1}\otimes w_{k-1}   
	 + \sum_{\stackrel{1\leq i\leq n, k\geq 0}{n-2i-2k=-s-2}} \beta_{ik} 4 \big(n-i+1 \big) v_{i-1}\otimes w_{k+1}  \\
	 & \hspace{4mm} + \sum_{\stackrel{0\leq i\leq n, k\geq 0}{n-2i-2k=-s-2}} \beta_{ik} 
	\Big(4i(n-i+1)-4k(k-1) \Big) v_i\otimes w_k    \\ 
	&= 
	 -   \sum_{\stackrel{1\leq i'\leq n, k'\geq 0}{n-2i'-2k' = -s-2}} \beta_{i'-1, k'+1}\big(4(k'+1) \big)(k')(i') v_{i'}\otimes w_{k'} +   \sum_{\stackrel{0\leq i''\leq n-1, k''\geq 1}{n-2i'' -2k'' = -s-2}} \beta_{i''+1, k''-1} 4 \big( n-i'' \big) v_{i''}\otimes w_{k''}     \\ 
	&=  \sum_{\stackrel{0\leq i\leq n, k\geq 0}{n-2i-2k=-s-2}} \beta_{ik} 
	 \Big( 4i(n-i+1)-4k(k-1) \Big)v_i\otimes w_k  
	 -   \sum_{\stackrel{0\leq i\leq n, k\geq 0}{n-2i-2k = -s-2}} \beta_{i-1, k+1} 4ki 
	  (k+1) v_{i}\otimes w_{k}  \\ 
	&\hspace{4mm} +   \sum_{\stackrel{0\leq i\leq n, k\geq 0}{n-2i -2k = -s-2}} \beta_{i+1, k-1} 4\big(n-i \big) v_{i}\otimes w_{k} 
		\hspace{4mm} \mbox{ since } \beta_{-1,k+1} :=0 \mbox{ and } \beta_{i+1,-1} := 0  \\   
	&= \sum_{\stackrel{0\leq i\leq n, k\geq 0}{n-2i-2k=-s-2}} 
		\Big( 
			\beta_{ik} \big(4i(n-i+1)-4k(k-1) \big)   -  \beta_{i-1, k+1} 4ki (k+1)   +  \beta_{i+1, k-1} 4(n-i) \Big)  v_{i}\otimes w_{k},   \\  
\end{aligned}  
\]  
where $ i'=i+1$, $k' = k-1$, $i'' = i-1$, and $k'' = k+1$ hold in the second equality. 

Let $\zeta_{ik}:= \beta_{ik} \big(4i(n-i+1)-4k(k-1) \big)   -  \beta_{i-1, k+1} 4ki 
\big(k+1 \big)   +  \beta_{i+1, k-1} 4 \big( n - i \big) $. 
Then  
\[ 
\begin{aligned}  
0 &= (\Omega - c_s)^2 .a_{-s-2} \\ 
&= 	\sum_{\stackrel{0\leq i\leq n, k\geq 0}{n-2i-2k=-s-2}} 
	\Big( \zeta_{ik} (4i(n-i+1)-4k(k-1))   -  \zeta_{i-1, k+1} 4ki (k+1)   +  \zeta_{i+1, k-1} 4(n-i)  \Big)  v_{i}\otimes w_{k}   \\  
&= 	\sum_{\stackrel{0\leq i\leq n, k\geq 0}{n-2i-2k=-s-2}} 
	\Big(  \Big(\beta_{ik} \big(4i(n-i+1)-4k(k-1) \big)   -  \beta_{i-1, k+1} 4ki (k+1)   +  \beta_{i+1, k-1} 4(n-i) \Big) \cdot  \\ 
	&\hspace{8mm}\cdot  \Big(4i (n-i+1)-4k(k-1) \Big)    
	- \Big(   \beta_{i-1,k+1} \Big( 4(i-1)(n-i+2) - 4k(k+1) \Big)   \\ 
	&\hspace{6mm} - \beta_{i-2, k+2} 4(k+1)(i-1) (k+2)   
	+  \beta_{i, k} 4(n-i+1)  \Big)     4ki \big( k+1 \big)   \\ 
	&\hspace{6mm} +  \Big( \beta_{i+1,k-1} \Big( 4(i+1)(n-i)-4(k-1)(k-2) \Big)  \\ 
	& \hspace{6mm}   -  \beta_{i, k} 4 k (k-1)(i+1)   +  \beta_{i+2, k-2} 4(n-i-1) \Big)  \cdot 
 		4(n-i) \Big)  \cdot   v_{i}\otimes w_{k}   \\   
&= 16	\sum_{\stackrel{0\leq i\leq n, k\geq 0}{n-2i-2k=-s-2}} 
	 \Big(   \beta_{i-2, k+2} (i-1)i k (k+1)^2 (k+2)     \\ 
&\hspace{6mm}	  - \beta_{i-1, k+1} \Big(  
         i k (k+1)  \big( 2 i( n +2 - i) - (n + 2)  -2k^2 \big)         
	\Big) \\ 
	&\hspace{6mm}	 +  \beta_{ik} \Big(  
			\Big( i (n-i+1) - k(k-1) \Big)^2     
		  -   ik (k+1) (n-i+1)        
		  -   (i+1) (k-1) k  (n-i)  \Big)  \\ 
	&\hspace{6mm}	\left.     +  \beta_{i+1, k-1} \big( n - i \big) \Big(n+2i(n-i) -2(k-1)^2  \Big)      
	+  \beta_{i+2, k-2} (n-i-1)(n-i)  \right)v_{i}\otimes w_{k}.  
\end{aligned}  
\]  
This means the coefficient of each $v_i\otimes w_k$ must equal zero: 
\begin{equation}\label{eq:deriving-beta-ik-for-s-2} 
\begin{split}
0 &= \beta_{i-2, k+2} (i-1)i k (k+1)^2 (k+2)    
	-  \beta_{i-1, k+1} \Big( 
         i k (k+1)  \big( 2 i( n +2 - i) - (n + 2)  -2k^2 \big)         
	\Big) \\ 
	&\hspace{4mm}	 +  \beta_{ik} \Big(  
			\big( i (n-i+1) - k(k-1) \big)^2     
		  -   ik (k+1) (n-i+1)        
		  -   (i+1) (k-1) k  (n-i)  \Big)  \\ 
	&\hspace{4mm}	  +  \beta_{i+1, k-1} (n-i)\Big( n+2i(n-i) -2(k-1)^2  \Big)       
		  +  \beta_{i+2, k-2} \big( n-i-1 \big) \big( n - i \big) 
\end{split}
\end{equation}
for each $0\leq i\leq n$, $k\geq 0$, and $n-2i-2k=-s-2$. 
Note that $n+s$ must be even since $n+s = 2(i + k -1)$.  

When $k=0$, we have $i = \frac{n+s+2}{2}$ and 
$(i(n-i+1)  )^2 \beta_{i0} = 0. $ 
Since $s\in I'$, $0\leq s\leq n-2$ if $n$ is even or $1\leq s\leq n-2$ if $n$ is odd. 
So $i\not=0$ and $i\leq n$. Thus, $\beta_{i0}=0$ for all $i$.

Now, provided $2\leq  i< \frac{n+s+2 }{2}$ (with $n-2i-2k=-s-2$), we have
\begin{align}
&\beta_{i-2, \frac{n+s+6 -2i}{2} }   \label{betarecursion}  \\ 
   &= \frac {16}{   i(i-1) (n + s + 6 - 2i )(  n + s + 4 - 2i  )^2  (n+s+2-2i )    } \Bigg(  \beta_{i-1, \frac{n + s + 4 - 2i}{2}  }\cdot \notag\\
   &\quad \cdot  \left(  
          \dfrac{ i (n+s+2-2i) (  n+s+4-2i     )}{4}       \left( 2 i(n - i +2 ) - (n + 2)  -   2   \left( \dfrac{ n+s+2-2i }{2 }  \right)^2 
\right)    
	\right)  \notag\\ 
	 	 &\qquad-  \beta_{i, \frac{n+s+2-2i}{2}  } \left(  
			\left(i(n-i+1) -    \dfrac{(n+s+2-2i  ) ( n+s-2i)}{4}        \right)^2  \right.  \notag  \\ 
		  & \qquad  \left.   
  -  \dfrac{ n+s+2-2i }{4}\left(    i (n-i+1)   (n+s+4-2i)   
		  +     (n+s -2i) (i+1)(n-i)    
\right) 
				\right)       \notag \\   
	 	  &\qquad-  \beta_{i+1, \frac{n+s-2i}{2}  } (n-i) \left(      n  + 2 i (n-i)   -   \dfrac{( n + s - 2i )^2}{2}   \right)   
 	         -  \beta_{i+2, \frac{n+s-2-2i}{2} } (n-i-1)(n-i)\Bigg).   \notag  
\end{align}

For $k=1$, $i=\frac{n+s}{2}$ and the recursion in \eqref{eq:deriving-beta-ik-for-s-2} gives  
\begin{align*}
0 &= 3 \big( n+s-2 \big) \big( n+s \big) \beta_{\frac{n+s-4}{2}, 3 }    
  - \frac{n+s}{2}  \Big(n^2+2 n-s^2+4 s-8\Big) \beta_{\frac{n+s-2}{2}, 2 }  \\ 
   &\hspace{4mm}+  \frac{(n-s+2) (n+s)}{16}  \Big(n^2+2 n-s^2+2 s-8\Big)  \beta_{\frac{n+s}{2}, 1 }.   \\   
\end{align*}   
Recall the recurrence relation in Equation~\eqref{equation:relation-alpha-ij}. 
Setting 
\begin{align*}
 \beta_{\frac{n+s-4}{2}, 3 } & =  \alpha_{\frac{n+s-4}{2}, 3 }, \\
 \beta_{\frac{n+s-2}{2}, 2 } & = \alpha_{\frac{n+s-2}{2}, 2 }=  \dfrac{12}{(n-s+4)} \alpha_{\frac{n+s-4}{2},3},  \\
   \beta_{\frac{n+s}{2}, 1 }& =   \alpha_{\frac{n+s}{2}, 1 }= \dfrac{4}{(n-s+2)} \alpha_{\frac{n+s-2}{2},2} 
= \dfrac{48}{(n-s+2)(n-s+4)} \alpha_{\frac{n+s-4}{2},3}, 
\end{align*}
we obtain the relation 
\begin{align*}
& (n+s-2)   -      \dfrac{2 \left(n^2+2 n-s^2+4 s-8\right)}{(n-s+4)}  + \dfrac{  \left(n^2+2 n - s^2 + 2 s - 8 \right)}{ (n-s+4)}   
		 =0\end{align*} 
as one should for the expansion of the highest weight vector $u_{-s-2}$.

Now pick $\beta_{i,k}$ as rational numbers so as to satisfy \eqnref{betarecursion}  with $ \beta_{\frac{n+s}{2}, 1 }=   \alpha_{\frac{n+s}{2}, 1 }$ but  $\beta_{\frac{n+s-2}{2}, 2 } \neq  \alpha_{\frac{n+s-2}{2}, 2 }$. This will give us a generator $a_{-s-2}$ with rational coefficients that satisfies 
$$
 (\Omega-c_{-s-2})^2.a_{-s-2} =0 \mbox{ and }  (\Omega-c_{-s-2}).a_{-s-2}\neq 0.
 $$
After clearing denominators, we may assume $\beta_{i,k}\in\mathbb Z$.

In conclusion, we have
\begin{equation}
u_{-s-2}= \sum_{j=0}^{\frac{n+s}{2}} p_j v_j \otimes w_{\frac{n+s+2}{2}-j }
\end{equation}
with $p_j>0$ integers and 
\begin{equation}
a_{-s-2}= \sum_{j=0}^{\frac{n+s}{2}}  q_j v_j \otimes w_{\frac{n+s+2}{2} -j }
\end{equation}
with $q_j\in\mathbb Z$. Now there exists $m>0$ such that 
\begin{equation}
a_{-s-2}+mu_{-s-2} = 
\sum_{j=0}^{\frac{n+s}{2}} (q_j + mp_j)v_j \otimes w_{\frac{n+s+2}{2}-j }  
\end{equation}
 and $q_j+mp_j>0$ for all $0\leq j\leq \frac{n+s}{2}$.  Thus we can replace $a_{-s-2}$ with $a_{-s-2}+mu_{-s-2}$ as the following conditions:  
 $$
 (\Omega-c_{-s-2})^2. (a_{-s-2}+mu_{-s-2})=0   \mbox{ and } 
 (\Omega-c_{-s-2}). (a_{-s-2}+mu_{-s-2}) = (\Omega-c_{-s-2}). a_{-s-2}\neq 0 
 $$
are still satisfied. 
 \end{proof}

Since the ``new" vector $a_{-s-2}+mu_{-s-2}$ will have positive integer coefficients, this vector will correspond to the equivalence class of a sum of  exterior tensor products of modules of the form 
 $$
 (M_0\boxtimes N_{\frac{n+s+2}{2}})^{\oplus (q_0+mp_0)}\oplus (M_1\boxtimes N_{\frac{n+s}{2}})^{\oplus (q_1+mp_1)}\oplus \cdots \oplus 
 (M_{\frac{n+s}{2}}\boxtimes  N_{1})^{\oplus (q_{\frac{n+s}{2}} + mp_{\frac{n+s}{2}})}  
$$
in $\mathcal{L}_n\boxtimes  \mathcal{V}_0$.

\begin{corollary}\label{corollary:basis-Ts}
 The set $\{f^ka_{-r-2} :  k\in\mathbb{Z}_{\geq 0}\}\cup\{f^ku_r :  k \in \mathbb{Z}_{\geq 0}\}$ is a basis of the projective module $T_r$.  Consequently, $T_r$ has a basis consisting of nonnegative sums of vectors of the form $v_i\otimes w_l$. 
\end{corollary}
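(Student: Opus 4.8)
The plan is to realize the claimed set as a basis obtained by splicing two pieces: a basis $\{f^ku_r:k\ge 0\}$ for the submodule $V_r\subseteq T_r$, together with a lift $\{f^ka_{-r-2}:k\ge 0\}$ of a basis for the quotient $T_r/V_r\cong V_{-r-2}$, combining them via the short exact sequence $0\to V_r\to T_r\to V_{-r-2}\to 0$ recorded above. Write $\pi\colon T_r\to T_r/V_r\cong V_{-r-2}$ for the projection and put $c=r(r+2)$.

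First I would locate $u_r$ and $a_{-r-2}$ inside $T_r$. Since $T_r\in\mathcal I_{\mathfrak g}(\mathfrak{sl}_2)$, the operator $f$ is injective on $T_r$, so for any nonzero weight vector $w\in T_r$ the elements $f^kw$ ($k\ge 0$) have pairwise distinct weights and are all nonzero. By Proposition~\ref{proposition:p-j-for-Verma-mod-cat}, $u_r=\sum_jp_j\,v_j\otimes w_{\frac{n-r}{2}-j}$ is a nonzero highest weight vector of weight $r$ in $L_n\otimes V_0$; hence $U(\mathfrak{sl}_2)u_r=\mathbb C[f]u_r$ is an infinite dimensional highest weight module of highest weight $r$, so it is isomorphic to $V_r$, and comparing with the decomposition $L_n\otimes V_0\cong\bigoplus_{\rho\in I'}T_\rho\oplus\bigoplus_{s\in I'''}V_s$ one sees that the only copy of $V_r$ occurring in $L_n\otimes V_0$ is the submodule $V_r$ of the summand $T_r$; thus $\mathbb C[f]u_r=V_r\subseteq T_r$ and $\{f^ku_r:k\ge 0\}$ is a basis of it. On the other hand, by the proposition just proved, $a_{-r-2}$ is a weight vector of weight $-r-2$ with $(\Omega-c)^2a_{-r-2}=0\ne(\Omega-c)a_{-r-2}$ that generates the submodule $T_r$; applying $\pi$, the image $\pi(a_{-r-2})$ generates $V_{-r-2}$, and since it has weight $-r-2$, equal to the highest weight of $V_{-r-2}$, it is a nonzero multiple of the highest weight vector, so $\{\pi(f^ka_{-r-2})=f^k\pi(a_{-r-2}):k\ge 0\}$ is a basis of $V_{-r-2}$.

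Granting this, the splice is routine. For $v\in T_r$, writing $\pi(v)=\sum_kc_kf^k\pi(a_{-r-2})$ gives $v-\sum_kc_kf^ka_{-r-2}\in\ker\pi=V_r=\Span\{f^ku_r\}$, which proves spanning; and a relation $\sum_kb_kf^ku_r+\sum_kc_kf^ka_{-r-2}=0$ maps under $\pi$ to $\sum_kc_kf^k\pi(a_{-r-2})=0$, whence all $c_k=0$ and then all $b_k=0$. Hence $\{f^ka_{-r-2}:k\ge 0\}\cup\{f^ku_r:k\ge 0\}$ is a basis of $T_r$. For the ``consequently'' clause, recall from the proof of Proposition~\ref{proposition:p-j-for-Verma-mod-cat} that $f(v_i\otimes w_l)=(i+1)v_{i+1}\otimes w_l+v_i\otimes w_{l+1}$, so $f$ carries a vector with nonnegative integer coefficients in the basis $\{v_i\otimes w_l\}$ to another such vector; since $u_r$ and $a_{-r-2}$ have nonnegative integer coefficients in this basis, every element of $\{f^ka_{-r-2}\}\cup\{f^ku_r\}$ is a nonnegative integer combination of the $v_i\otimes w_l$, which is the asserted basis.

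The step I expect to be the main obstacle is the identification in the second paragraph: that $u_r$ is a generator of precisely the submodule $V_r\subseteq T_r$ (not some other copy of $V_r$) and that $a_{-r-2}$ maps onto a generator of the quotient $V_{-r-2}$. This rests on the uniqueness up to scalar of a weight-$r$ highest weight vector in $T_r$ and on the characterization, already used in the previous proposition, of generators of $T_r$ by the Casimir condition $(\Omega-c)^2(-)=0\ne(\Omega-c)(-)$; once these identifications are in place, the remainder is the elementary linear algebra of a short exact sequence of $\mathfrak{sl}_2$-modules whose outer terms are Verma modules with one dimensional weight spaces.
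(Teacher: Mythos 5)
Your argument is correct. Be aware, though, that the paper's own proof of this corollary is essentially a citation: the basis statement is attributed to Enright \cite{MR541329}, and only the nonnegativity of the coefficients is credited to the preceding propositions. What you have supplied is the structural argument behind that citation: you splice the basis $\{f^k u_r\}$ of the Verma submodule $V_r=\ker\pi$ with a lift $\{f^k a_{-r-2}\}$ of the PBW basis of the quotient $V_{-r-2}$, using the short exact sequence $0\to V_r\to T_r\to V_{-r-2}\to 0$, the uniqueness up to scalar of the weight-$r$ singular vector in $L_n\otimes V_0$ (which the paper itself invokes in the proof of Proposition~\ref{proposition:p-j-for-Verma-mod-cat}), and the Casimir criterion $(\Omega-c)^2a_{-r-2}=0\neq(\Omega-c)a_{-r-2}$ to see that $a_{-r-2}$ generates $T_r$ and hence maps onto the highest weight vector of the quotient. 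Every ingredient is available in the paper or in Enright, so this is a self-contained expansion rather than a genuinely different route; what it buys is independence from the external reference. For the ``consequently'' clause you correctly use that $f$ preserves nonnegative integer combinations of the $v_i\otimes w_l$ and that $u_r$ and $a_{-r-2}$ have positive integer coefficients; note that the positivity of the $q_j$ is only secured in the paper after the replacement $a_{-r-2}\mapsto a_{-r-2}+m\,u_{-r-2}$ at the end of the proof of the proposition producing $a_{-r-2}$, so it is worth saying explicitly that this is the representative you take.
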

\begin{proof} It follows from \cite{MR541329} that the above set is a basis.  The fact that the coefficients can be taken to be positive integers is what is proven above.
\end{proof}

\begin{remark}  If one considers the Clebsch-Gordan decomposition of $L_1\otimes L_1\cong L_{2}\oplus L_{0}$ with respect to the basis $v_i\otimes v_j$, we have a highest weight vector of weight $0$ of the form
\begin{equation}
u_0=v_0\otimes v_1-v_1\otimes v_0.
\end{equation}
In particular, the coefficients of the tensors $v_i\otimes v_j$ in the decomposition of a highest weight vector inside of $L_m\otimes L_n$ can be negative.   This is in contrast to Corollary~\ref{corollary:basis-Ts}. 
 \end{remark}

\subsection{Affine Hecke algebras}  In this section, we follow the notation in \cite{MR2373155}.   Assume throughout that $\mathbb k$ is a field and $q\in\mathbb k^*$. 
\subsubsection{Nondegenerate affine Hecke algebras}\label{subsubsection:nondegenerate-affine-Hecke-algebras}   Here we take $q\neq 1$.  Recall the affine Hecke algebra $H_n = H_n(q)$ is the $\mathbb k$-algebra with generators
\[
T_1,\dots, T_{n-1},X_1^{\pm 1},\dots, X_n^{\pm 1}
\]
with defining relations
\begin{align}
\mbox{eigenvalue relations:}\hspace{2cm}& \\ 
(T_i+1)(T_i-q)&=0, \\
\mbox{braid relations:}\hspace{2cm}& \\ 
T_iT_j&=T_jT_i \quad \text{ if }|i-j|>1, \\ 
T_iT_{i+1}T_i&=T_{i+1}T_iT_{i+1}, \\ 
\mbox{Laurent relations:}\hspace{2cm}& \\ 
X_iX_i^{-1}&=X_i^{-1}X_i=1,\\
X_iX_j&=X_jX_i,  \\
\mbox{action relations:}\hspace{2cm}& \\ 
X_i T_j &= T_j X_i \quad \text{ if }i-j\neq 0,1,  \\ 
T_iX_iT_i&=qX_{i+1}. 
\end{align}
The subalgebra of $H_n(q)$ generated by $T_1,\dots, T_{n-1}$ is denoted by $H_n^A(q)$ which is isomorphic to the Hecke algebra of the symmetric group $\mathfrak S_n$.   The subalgebra of Laurent polynomials in $X_1^{\pm 1},\dots, X_n^{\pm 1}$ is denoted by $P_n=\mathbb k[X_1^{\pm 1},\dots ,X_n^{\pm1}]$.

\subsubsection{Degenerate affine Hecke algebras}\label{subsubsection:degenerate-affine-Hecke}  
Here one sets $q=1$.   Recall the degenerate affine Hecke algebra $H_n(1)$ is the $\mathbb k$-algebra with generators
\[
T_1,\dots, T_{n-1},X_1,\dots, X_n
\]
with defining relations
\begin{align}
T_i^2&=1, \\
T_iT_j&=T_jT_i \quad \text{ if }|i-j|>1, \\ 
T_iT_{i+1}T_i&=T_{i+1}T_iT_{i+1}, \\ 
X_i X_j &= X_j X_i,  \\ 
X_iT_j&=T_jX_i \quad \text{ if }i-j\neq 0,1, \\
X_{i+1}T_i&=T_iX_{i}+1.
\end{align}
The subalgebra of $H_n(1)$ generated by $T_1,\dots, T_{n-1}$ is denoted by $H_n^A(1)$ which is isomorphic to the group algebra of the symmetric group $\mathfrak S_n$.   The subalgebra of polynomials in $X_1 ,\dots, X_n$ is denoted by $P_n=\mathbb k[X_1,\dots ,X_n]$.

To obtain the relation $X_{i+1}T_i = T_iX_{i}+1$ from $T_iX_i T_i=qX_{i+1}$ in the nondegenerate setting, we substitute $X_i$ with the new generator $\overline{X_i}  = \dfrac{1-X_i}{1-q}$ into both sides of $T_iX_i T_i=qX_{i+1}$: 
\[ 
\begin{aligned}  
T_i X_i  T_i &= T_i \left( 1-(1-q)\overline{X_i} \right) T_i =  T_i^2-(1-q)T_i\overline{X_i} T_i
= q-(1-q)T_i - (1-q)T_i\overline{X_i}T_i \\ 
&= q(1-(1-q)\overline{X_{i+1}}) = q-q(1-q)\overline{X_{i+1}} 
\end{aligned} 
\]  
since $T_i^2 = q-(1-q)T_i$. 
This means 
\[ 
 T_i + T_i\overline{X_i}T_i = q\overline{X_{i+1}}.   
\] 
Multiply on the right by $T_i$ and let $q$ approach $1$ to obtain: 
\[ 
 1 + T_i\overline{X_i}  = \overline{X_{i+1}}T_i, 
\] 
where we have applied the relation $T_i^2=1$ on the left-hand side.

\section{Weak $\mathcal{I}_{\mathfrak{g}}(\mathfrak{sl}_2)$ and $\mathcal{I}_{\mathfrak{g}}(\mathfrak{sl}_2)$-categorification}\label{section:weak-categorifications}

Let $e,f,h$ be the usual basis of $\mathfrak {sl}_2(\mathbb{C})$.  In order to introduce an $\mathfrak{sl}_2$-categorification of a Verma module, we modify the definition in \cite{MR2373155} and make it applicable to Enright's category $\mathcal{I}_{\mathfrak{g}}(\mathfrak{sl}_2)$.

Let $\mathcal{C}$ be an additive category and $\mathbb{k}$ is a commutative ring. The category $\mathcal{C}$ is called 
{\em $\mathbb{k}$-linear} if the morphism sets $\Hom_{\mathcal{C}}(x,y)$ have the $\mathbb{k}$-module structure for all 
$x, y\in Obj(\mathcal{C})$ and compositions of morphisms are $\mathbb{k}$-bilinear maps. 
Throughout this section, let $\mathcal{A}$ be an Artinian and Noetherian $\mathbb{k}$-linear abelian category.

Recall that if $\mathcal A$ is an additive category, then the {\em split Grothendieck group} of $\mathcal A$ is the free abelian group $K_{\oplus}(\mathcal A)$ with generators $[A]$ where $A\in \mathcal A$ and relations $[A]=[A']+[A'']$ if $A\cong A'\oplus A''$.

\begin{definition}\label{definition:modified-weak-categ}    Let $n\geq -1$.  
A {\em weak $\mathcal{I}_{\mathfrak{g}}(\mathfrak{sl}_2)[n(n+2)]$-categorification} of a module $M$ from the category $\mathcal I_{\mathfrak{g}}(\mathfrak{sl}_2)[n(n+2)]$ consists of the data of a pair $(E,F)$ of additive   
endo-functors of  a category $\mathcal{A}$ 
such that: 
\begin{enumerate} 
\item the following identity holds 
\begin{equation}\label{pseudoadjoint}
B^2+C^2+2n(n+2)C+(n(n+2))^2I\cong B C+CB+2n(n+2)B, 
\end{equation} 
where
$$
B=(EF)^2+(FE)^2+2EF+2FE,\quad C=EF^2E+FE^2F,
$$
and $I$ is the identity functor, 
\item $-e=[-E]$ acts locally nilpotentally on the $\mathfrak{sl}_2$-representation  
$M=\mathbb{Q}\otimes K_{\oplus}(\mathcal{A})$ and $f$ acts as $[F]$, 
\item $M$ is a finitely generated  $\mathfrak{sl}_2$-module,
\item the classes of the indecomposable objects of $\mathcal{A}$ are weight vectors,  
\item $M=\displaystyle{\bigoplus_{\mu\in \mathfrak{h}^*}} M_{\mu}$ is a weight module, where $M_\mu:=\{m\in M: [e,f]m=\mu m\}$.  
\end{enumerate}
\end{definition}
\begin{remark}
Recall that $\text{Hom}(EM,N)\cong \text{Hom}(M,FN)$ for $E$ and $F$ adjoint functors, but $EM=0$ for a module $M$ corresponding to a highest weight vector, but $\text{Hom}(M,FN)\cong \mathbb C$.
\end{remark}

Thus our functors $E$ and $F$ are not adjoint for if $E$ and $F$ were adjoint, then we would have an isomorphism 
$\Hom_{\mathcal{V}_n}(FU_{-n}, U_{-n-2})\cong \Hom_{\mathcal{V}_n}(U_{-n}, EU_{-n-2})$. However, $EU_{-n-2}=0$ while $\Hom_{\mathcal{V}_n}(FU_{-n}, U_{-n-2})$ contains the identity morphism.

%

\begin{remark}
The element $f=[F]$ is required to be locally finite in \cite{MR2373155}, but we do not require this condition in Definition~\ref{definition:modified-weak-categ} since $f$ is not locally finite for Verma modules.   Moreover, the identity \eqnref{pseudoadjoint} follows from expanding out the equation
$$
(\Omega-n(n+2))^2=0
$$
by moving all summands with a negative coefficient to the right side and then replacing $e$ and $f$ by functors $E$ and $F$, respectively. 

\end{remark}

\begin{definition}\label{defn:sl2-categorification}
An {\em $\mathcal{I}_{\mathfrak{g}}(\mathfrak{sl}_2)$-categorification}  is a weak $\mathcal{I}_{\mathfrak{g}}(\mathfrak{sl}_2)$-categorification  
with the extra data of 
$X\in \End(F)$ and $T\in \End(F^2)$ such that 
\begin{enumerate}
\item $(\mathbf{1}_F T)\circ (T \mathbf{1}_F)\circ (\mathbf{1}_F T) 
= (T \mathbf{1}_F) \circ (\mathbf{1}_F T)\circ (T \mathbf{1}_F)$ in $\End(F^3)$, \label{Hecke1}
\item $(T+ \mathbf{1}_{F^2})\circ (T-q\mathbf{1}_{F^2})=0$ in $\End(F^2)$, \label{Hecke2}
\item $T\circ (\mathbf{1}_F X)\circ T = 
\begin{cases}
q X \mathbf{1}_F & \mbox{ if }  q\not=1 \\ 
X\mathbf{1}_F - T & \mbox{ if } q = 1 
\end{cases}$ in $\End(F^2)$, \label{Hecke3}
\end{enumerate}   
\end{definition} 
We do not assume that $X-a$ is locally nilpotent in Definition~\ref{defn:sl2-categorification} since 
$X-a$ is not locally nilpotent on the category $\mathcal{V}_0$.

\section{Khovanov's construction of the categorification $\mathcal H$ of the Heisenberg algebra $H_{\mathbb Z}$ }

In these sections, we will discuss Khovanov and Licata-Savage's categorification of the integral form of the Heisenberg algebra. 
\subsection{The integral form of the Heisenberg algebra}\label{subsection:Heisenberg-algebra}

A Heisenberg algebra is generated by $p_i$ and $q_i$ where $i$ is in the infinite set $I$, which satisfy the relations: 
\[ 
p_i q_j = q_j p_i + \delta_{ij}1, \hspace{4mm} 
p_i p_j=  p_j p_i, \hspace{4mm}
 q_i q_j = q_j q_i.
\]

Let $H_{\mathbb{Z}}$ be the unital ring with generators $a_n$ and $b_n$, where $n\geq 1$, that satisfy the relations: 
\[  
a_n b_m = b_m a_n + b_{m-1}a_{n-1}, \hspace{4mm} 
a_n a_m = a_m a_n, \hspace{4mm} 
b_n b_m = b_m b_n,  
\] 
where we set $a_0=b_0 = 1$ and $a_n= b_n = 0$ for $n < 0$. 
Since any products of $a_n$'s and $b_m$'s could be rewritten as a linear combination with nonnegative integer coefficients of monomials of the form: 
\begin{equation}\label{eq:Heisenberg-generators}
b_{m_1}b_{m_2}\cdots b_{m_k}a_{n_1}a_{n_2}\cdots a_{n_r},  \mbox{ where } 1\leq m_1 \leq m_2 \leq \ldots \leq m_k 
\mbox{ and } 1\leq n_1\leq n_2\leq \ldots \leq n_r, 
\end{equation}  
monomials in \eqref{eq:Heisenberg-generators} form a basis of $H_{\mathbb{Z}}$. 
Let $H=H_{\mathbb{Z}}\otimes \mathbb{C}$ be a $\mathbb{C}$-algebra. 
Writing 
\[ 
A(t) = 1 + a_1 t + a_2t^2 + \ldots, \hspace{4mm} 
B(u) = 1 + b_1 u + b_2 u^2 + \ldots, 
\] 
$a_n b_m = b_m a_n + b_{m-1}a_{n-1}$ could now be replaced with 
\[  A(t) B(u) = B(u)A(t)(1+tu). 
\] 
Now, let 
\[
\begin{aligned} 
\widetilde{A}(t) 
&= 1 + t A'(-t)A(-t)   +\ldots  \\ 
&= 1 + \widetilde{a}_1 t + \widetilde{a}_2 t^2 + \ldots. \\ 
\end{aligned} 
\] 
We see that $\widetilde{a}_1, \widetilde{a}_2, \ldots$ generate the same subalgebra generated by $a_1, a_2, \ldots$ and that 
\[ 
\widetilde{A}(t)B(u) = B(u)\widetilde{A}(t) + \dfrac{tu}{1-tu}. 
\] 
Equating the coefficients, we obtain 
\begin{equation}\label{eq:deriving-toward-Heisenberg-alg}
\widetilde{a}_n b_m = b_m \widetilde{a}_n + \delta_{n,m} 1, \hspace{4mm} 
\widetilde{a}_n \widetilde{a}_m = \widetilde{a}_m \widetilde{a}_n, \hspace{4mm} 
b_n b_m = b_m b_n. 
\end{equation}
So the algebra $H$ is isomorphic to the algebra generated by $\widetilde{a}_n$ and $b_m$, where $n,m>0$, together with the relations in \eqref{eq:deriving-toward-Heisenberg-alg}. 
This implies $H$ is isomorphic to the Heisenberg algebra and $H_{\mathbb{Z}}$ is its integral form. 

Note that the one variable Heisenberg algebra has generators $p$ and $q$ with one defining relation $pq-qp=1$, which appears as the algebra of operators in the quantization of the harmonic oscillator.

\section{Future work} 
In our second follow-up paper, we will address in more detail why we get a weak $\mathcal{I}_{\mathfrak{g}}(\mathfrak{sl}_2)$ and $\mathcal{I}_{\mathfrak{g}}(\mathfrak{sl}_2)$-categorification.

Moreover we will address the following of problems as a part of our future work. 
\begin{enumerate}[(i).]
\item 
For a positive integer $n$, one has $V_n/V_{-n-2}\cong L_n$ as $\mathfrak{sl}_2$-modules. It follows that the short exact sequence 
$$ \begin{CD}
0 @>>> V_{-n-2}  @>>> V_n  @>>> L_n @>>> 0.
 \end{CD}
 $$
of modules exists, which is called the Bernstein-Gelfand-Gelfand (BGG) resolution of the $n+1$-dimensional module $L_n$. 
We cannot use $\Omega-n(n+2)$ in the categorification construction because the functor sends all objects to the zero object, so this Casimir cannot be used to construct kernels and cokernels. Thus we need to find another module homomorphism $V_n\rightarrow L_n$. 
The canonical map for $V_n\rightarrow L_n$ is to send the highest weight vector in $V_n$ to the highest weight vector in $L_n$ such that for 
$f^i u_s \in V_s \subseteq L_n\boxtimes V_0$ and $F^iU_s \in \mathcal{V}_s \subseteq \mathcal{L}_n \boxtimes \mathcal{V}_0$ 
with $f^{s+j}u_s\mapsto F^{s+j}U_s$, and $f^{s+j}u_s\in V_{-s-2}$ for all $j\geq 1$. So our first question is stated as follows:  
\begin{center}
if categorification sends  $f^i u_s  \mapsto F^iU_s$, 
then find corresponding vector in  $L_s$  and object in $\mathcal{L}_s$ using \cite{MR2373155}. 
\end{center}
These leads us to the question of whether it makes sense to talk about a quotient category $\mathcal V_n/\mathcal V_{-n-2}$ and if so, does it have the structure of an $\mathfrak{sl}_2$-categorification that is naturally equivalent to the $\mathfrak{sl}_2$-categorification $\mathcal L_n$? 
\item As Enright in \cite{MR541329} shows that there is a short exact sequence 
$$ \begin{CD}
0 @>>>  V_r    @>>>   T_r  @>>>  V_{-r-2} @>>> 0 
 \end{CD}
 $$
of $\mathcal I_{\mathfrak{g}}(\mathfrak{sl}_2)[r(r+2)]$-modules, 
construct a short exact sequence 
$$ \begin{CD}
0 @>>> \mathcal V_r   @>>> \mathcal T_r  @>>> \mathcal  V_{-r-2}@>>> 0 
 \end{CD}
 $$
of $\mathcal I_{\mathfrak{g}}(\mathfrak{sl}_2)[r(r+2)]$-categorifications.   
Construct a functor $\mathcal{T}_r\rightarrow \mathcal{V}_{-r-2}$, and then 
$\mathcal{V}_r \cong \ker(\mathcal{T}_r\rightarrow \mathcal{V}_{-r-2})$. 
We believe that $\im (\Omega- r(r+2))\cong  \mathcal{V}_{-r-2}$. 
Since $\mathcal{T}_r$ is a monoidal, additive category, the notion of subtraction does not exist.  
 Enright in  \cite{MR541329} uses the decomposition in \eqnref{Enrightsresult} in an important way. We leave it as an open problem on how much of Enright's paper can be categorified. 
 \item   Let $m,n\in\mathbb N$.  Deligne in \cite{MR1106898}  introduces the notion of a tensor product of abelian categories and shows $\mathcal L_m\otimes \mathcal L_n$ exists and is an abelian category.  Then one could ask for an equivalence of $\mathfrak{sl}_2$-categorifications  
 \begin{equation}
 \mathcal L_m\otimes \mathcal L_n\cong \mathcal L_{m+n}\oplus \cdots \oplus \mathcal L_{|m+n|}.
 \end{equation} 
 \item Since we now have $\mathcal I_{\mathfrak{g}}(\mathfrak{sl}_2)$-categorifications of $V_s$ and $T_r$, does there exist a decomposition of categories whose decategorification leads to \eqnref{tensorproductdecomposition}: 
 \begin{align}   
L_n\otimes V_\lambda\cong \bigoplus_{r\in I'} T_{\lambda +r}\oplus\bigoplus_{s\in I'''} V_{\lambda+s}
\end{align}  
for $0\neq \lambda \in \mathbb Z$?
\item The category $\mathcal I_{\mathfrak{g}}(\mathfrak{sl}_2)$ is closed under tensoring by $L_n$, $T_r$ and $V_s$ and so it leads to the question of what are the tensor product decomposition theorems for the tensor product of two $\mathcal I_{\mathfrak{g}}(\mathfrak{sl}_2)$-categorifications, such as
 \begin{align} 
\mathcal L_n\boxtimes \mathcal T_\lambda,\quad  \mathcal T_{\lambda  }\boxtimes  \mathcal V_{\mu},\quad \mathcal V_{\lambda  }\boxtimes \mathcal V_{\mu},\quad  \mathcal T_\lambda\boxtimes  \mathcal T_\mu ?
\end{align}  
\item  How does one define the category $\text{Hom}(\mathcal T_n, \mathcal V_m)$, etc. and once one has defined this, can we prove results like adjoint associativity 
$$\text{Hom}(\mathcal L_k \boxtimes \mathcal V_n, \mathcal V_m)\cong \text{Hom}(\mathcal V_n,\text{Hom}(\mathcal L_k, \mathcal V_m))$$ 
or 
$$\text{Hom}(\mathcal L_k, \mathcal L_0)\boxtimes   \mathcal V_m\cong \text{Hom} (\mathcal L_k,\mathcal V_m)?$$ 

\item Since the category of finite dimensional $\mathfrak{sl}_2$-modules is an additive monoidal category, does the sum of the categorifications $\mathcal T_r$ and $\mathcal V_s$, $r\in\mathbb N$ and $s\in \mathbb Z$ form a module category in the sense of Viktor Ostrik (cf. \cite{MR1976459})? 
\item 
There are a number of tensor product decomposition theorems for $\mathfrak{sl}(2,\mathbb R)$-modules in \cite{MR1151617}.  Do these tensor product decomposition theorems categorify to interesting  $\mathfrak{sl}(2,\mathbb R)$-equivalence of categorifications?
\end{enumerate}

\appendix
\bibliography{KLR-algebras-Im}

\end{document}

\subsubsection{The category $\mathcal{H}'$}\label{subsubsection:category-H-prime}  
Let $\mathbb k$ be a commutative ring with identity. The strict $\mathbb{k}$-linear monoidal category $\mathcal{H}'$ has two generating objects $Q_+$ and $Q_-$, so an object of $\mathcal{H}'$ is a finite direct sum of tensor products $Q_{\epsilon_1}\otimes Q_{\epsilon_2}\otimes\cdots \otimes Q_{\epsilon_m}$, which is denoted as $Q_{\epsilon}$, where $\epsilon = \epsilon_1\cdots \epsilon_m$ are finite sequences of pluses and minuses.  
Thus the concatenation $\epsilon \epsilon'$ of sequences $\epsilon$ and $\epsilon'$ corresponds to the tensor product $Q_{\epsilon}\otimes Q_{\epsilon'}\cong Q_{\epsilon \epsilon'}$ 
of the corresponding objects, and the unit object $\mathbf{1}=Q_{\varnothing}$ corresponds to the empty sequence. 
The space of morphisms $\Hom_{\mathcal{H}'}(Q_{\epsilon}, Q_{\epsilon'})$ for sequences $\epsilon$ and $\epsilon'$ is the $\mathbb{k}$-module generated by planar diagrams consisting of oriented compact one-manifolds immersed into the plane strip $\mathbb{R}\times [1,0]$, modulo the local relations:   
\begin{equation}\label{eq:relations-line1}
\xy  
(6,-12)*{} ; (6,12)*{}**\crv{(-12,0) } ?<*\dir{<} ;
(-6,-12)*{} ; (-6,12)*{}**\crv{(12,0)} ?>*\dir{>}
\endxy=\quad \xy 
(6,-12)*{} ; (6,12)*{}**\crv{  } ?<*\dir{<} ;
(-6,-12)*{} ; (-6,12)*{}**\crv{ } ?>*\dir{>}
\endxy\hskip 60pt \xy 
(6,-12)*{} ; (6,12)*{}**\crv{(-12,0) } ?>*\dir{>} ;
(-6,-12)*{} ; (-6,12)*{}**\crv{(12,0)} ?<*\dir{<}
\endxy=\quad \xy 
(6,-12)*{} ; (6,12)*{}**\crv{  } ?>*\dir{>} ;
(-6,-12)*{} ; (-6,12)*{}**\crv{ } ?<*\dir{<}
\endxy\quad-\quad \xy 
(6,12)*{} ; (-6,12)*{}**\crv{ (0,0) } ?<*\dir{<} ;
(-6,-12)*{} ; (6,-12)*{}**\crv{ (0,0)} ?<*\dir{<}
\endxy
\end{equation}

\begin{equation}\label{eq:relations-line2}
\xy
(6,-12)*{} ; (6,12)*{}**\crv{(-12,0) } ?>*\dir{>} ;
(-6,-12)*{} ; (-6,12)*{}**\crv{(12,0)} ?>*\dir{>}
\endxy=\quad  \xy
(6,-12)*{} ; (6,12)*{}**\crv{  } ?>*\dir{>} ;
(-6,-12)*{} ; (-6,12)*{}**\crv{ } ?>*\dir{>}
\endxy \hskip 60pt \xy
(8,-12)*{} ; (-8,12)*{}**\crv{  } ?>*\dir{>} ;
(0,-12)*{} ; (0,12)*{}**\crv{(12,0)} ?>*\dir{>};
(-8,-12)*{} ; (8,12)*{}**\crv{ } ?>*\dir{>}
\endxy=\quad \xy
(8,-12)*{} ; (-8,12)*{}**\crv{  } ?>*\dir{>} ;
(0,-12)*{} ; (0,12)*{}**\crv{(-12,0) } ?>*\dir{>} ;
(-8,-12)*{} ; (8,12)*{}**\crv{ } ?>*\dir{>}
\endxy
\end{equation}

\begin{equation}\label{eq:relations-line3}
\begin{tikzpicture}[
baseline=-3pt,
decoration={
  markings,
  mark=at position 0.5 with {\arrow{>}}
  }
]
\draw[postaction=decorate] 
  (0,0) circle [radius=13pt];
\end{tikzpicture}
=1\qquad\qquad
\begin{tikzpicture}[baseline=-3pt]
\draw[->] 
  (0,-1) to[out=90,in=90,looseness=2] 
  (-1,0) to[out=-90,in=-90,looseness=2]
  (0,1);
\end{tikzpicture}
=0. 
\end{equation}
The endpoints of the string diagrams are on the coordinates $\{ 1,2,\ldots, m\}\times \{ 0\}$, which correspond to the sequence $\epsilon$ of length $m$, and $\{ 1,2,\ldots, k\}\times \{ 1\}$, which correspond to the sequence $\epsilon'$ of length $k$, where the endpoints of the directed strings match the signs in the sequences $\epsilon$ and $\epsilon'$.  
Composition of morphisms is given by concatenation of the diagrams, and a diagram without endpoints is an endomorphism of $\mathbf{1}$. 
An important consequence of the above local relations is the Heisenberg relation:
\[
Q_{-+}\cong Q_{+-}\oplus \mathbf 1. 
\] 
In the literature, strands with right curls could also be denoted by strands with dots on them. In particular, a string with $k$ dots is drawn as a string with one dot with a $k$ next to it. 
We thus have a homomorphism from the degenerate affine Hecke algebra $H_n(1)$ to the $\mathbb{k}$-algebra of endomorphisms $\End(Q_{+^n})$ of the functor $Q_{+^n}$, where $+^n$ is the sequence of $n$ pluses.  
The homomorphism is given by sending the permutation generator $T_i$ to the permutation diagram of the $i$-th and $i+1$-st strands, and the polynomial generator $X_i$ is mapped to the dot on the $i$-th strand: 
that is, $T_i\mapsto (Q_{+^n}\rightarrow Q_{+^n})$, where the string diagram is given as:  
\[ 
\xymatrix@-1pc{ 
& & & & & & & & & & \\ 
& & & & & & & & & & \\ 
& & \ldots & \ldots & & & & & \ldots  & & \\ 
\ar[uuu] & \ar[uuu] & & &\ar[uuu] & \stackrel{\ar[uuur]}{i} &  \stackrel{\ar[uuul]}{i+1} &\ar[uuu] & & \ar[uuu] & \ar[uuu]_,\\ 
} 
\] 
and $X_i \mapsto (Q_{+^n}\rightarrow Q_{+^n})$, where the string diagram is given as: 
\[ 
\xymatrix@-1pc{ 
& & & & & & & & & & \\ 
& & &  & & & & & & & \\ 
& & \ldots & \ldots & & & & & \ldots  & & \\ 
\ar[uuu] & \ar[uuu] & & &\ar[uuu] & \stackrel{\ar[uuu]^{1}_{\hspace{-1.5mm}\bullet}}{i} &  \stackrel{\ar[uuu]}{} &\ar[uuu] & & \ar[uuu] & \ar[uuu]_.\\ 
} 
\] 
Now, a closed diagram $D$ is defined as an endomorphism of the identity functor $\mathbf{1}\in \mathcal{H}'$.  
Local moves given by Khovanov, $D$ is isomorphic to a linear combination of crossingless diagrams consisting of nested dotted circles, and bubble moves are used to split apart nested circles. Finally, we convert dotted counterclockwise circles into a linear combination of products of dotted clockwise circles. 
Thus, $\End_{\mathcal{H}'}(\mathbf{1})$ is a quotient of the polynomial algebra $\Pi :=\mathbb{k}[c_0,c_1,c_2,\ldots]$ in countably-many variables: 
\begin{equation}\label{eq:psi_0-iso}
\psi_0:\Pi = \mathbb{k}[c_0, c_1, c_2, c_3,\ldots] \stackrel{\simeq}{\twoheadrightarrow} \End_{\mathcal{H}'}(\mathbf{1}), 
\end{equation}
which maps $c_k$ to the clockwise circle with $k$ dots. 
Khovanov proves that the map~\eqref{eq:psi_0-iso} is actually an isomorphism. 
Now, combining the two morphisms described above gives a homomorphism: 
\begin{equation}\label{eq:psi_m-iso}
\psi_m:H_m(\mathbf{1})\otimes \Pi \rightarrow \End_{\mathcal{H}'}(Q_{+^m}), 
\end{equation} 
 where we place a closed diagram to the right of the diagram representing an element of $H_m(\mathbf{1})$. 
Since any homomorphism in $\End_{\mathcal{H}'}(Q_{+^m})$ is a combination of diagrams consisting of a permutation $\sigma\in S_m$, some number (including zero) of dots on each strand above the permutation diagram, and a monomial in dotted clockwise circles to the right of the permutation diagram,  
we have that $\psi_m$ is also an isomorphism. 

We will now describe the basis $B(\epsilon, \epsilon')$ of the $\mathbb{k}$-vector space $\Hom_{\mathcal{H}'}(Q_{\epsilon}, Q_{\epsilon'})$. 
Let $\epsilon$ and $\epsilon'$ be sequences such that $\epsilon\epsilon'$ has a total number of $k$ pluses and $k$ minuses (so the length of $\epsilon\epsilon'$ is $2k$) for if the number of pluses in $\epsilon\epsilon'$ does not equal the number of minuses in $\epsilon \epsilon'$, then their hom space is trivial. 
Write sequences $\epsilon$ and $\epsilon'$ at the bottom and at the top of the planar strip $\mathbb{R}\times [0,1]$, respectively, with the segments embedded in the strip such that the endpoints match corresponding elements of $\epsilon$ and $\epsilon'$ and such that any two segments intersect at most once, with triple intersections not being allowed.   
We obtain $B(\epsilon,\epsilon')$ by forming all possible oriented matchings of these two sequences via $k$ oriented segments in the plane strip. 
Note that diagrammatic mappings in $B(\epsilon, \epsilon')$ could be further described via first embedding in  an interval disjoint from intersections near the terminal endpoint of each interval and putting any number of dots on it. 
In the rightmost region of the diagram, we draw a finite number of disjoint, clockwise oriented, nonnested circles, each with nonnegative number of dots.   
Such diagrams $B(\epsilon,\epsilon')$ are parameterized by $k!$ possible matchings of the $2k$ oriented endpoints, by a sequence of $k$ nonnegative integers describing the number of dots on each interval, and by a finite sequence of nonnegative integers listing the number of clockwise oriented circles with no dots, one dot, two dots, and so forth. 

Now, from the isomorphism of functors $Q_{-+}\cong Q_{+-}\oplus \mathbf{1}$, consider 
\[ 
B(\epsilon, \epsilon') \cong B(\varnothing, \overline{\epsilon}\epsilon') \cong B(\varnothing,\epsilon'\overline{\epsilon}), 
\] 
which are obtained from the canonical isomorphisms
\[ 
\Hom_{\mathcal{H}'}(Q_{\epsilon}, Q_{\epsilon'}) \cong \Hom_{\mathcal{H}'}(\mathbf{1}, Q_{\overline{\epsilon}\epsilon'}) 
\cong \Hom_{\mathcal{H}'}(\mathbf{1}, Q_{\epsilon'\overline{\epsilon}}) 
\] 
by moving the lower endpoints of a diagram up via a multiple cups diagram, 
where $\overline{\epsilon}$ is the sequence $\epsilon$ with the order and all signs reversed. 
The biadjointness of the natural transformations follow from the definition of $\mathcal{H}'$. 
It is thus enough to show that $B(\varnothing,\epsilon)$ is linearly independent for any sequence $\epsilon$ with $k$ pluses and $k$ minuses. 
It is clear from \eqref{eq:psi_m-iso} that $B(\varnothing,\epsilon)$ forms a basis for $k=0,1$. By induction on $k$ and by induction on the lexicographic order among length $2k$ sequences, 
the sets $B(\varnothing, \epsilon_1\epsilon_2)$ and $B(\varnothing, \epsilon_1 + - \epsilon_2)$ are linearly independent in 
$\Hom_{\mathcal{H}'}(\mathbf{1}, Q_{\epsilon_1\epsilon_2})$ and 
$\Hom_{\mathcal{H}'}(\mathbf{1}, Q_{\epsilon_1+-\epsilon_2})$, respectively. 
Via the canonical decomposition
\[ 
Q_{\epsilon_1 -+ \epsilon_2} \cong Q_{\epsilon_1 + - \epsilon_2} \oplus Q_{\epsilon_1\epsilon_2}, 
\] 
there are maps $B(\varnothing, \epsilon_1\epsilon_2)\rightarrow \Hom_{\mathcal{H}'}(\mathbf{1}, Q_{\epsilon_1 -+ \epsilon_2})$ and $B(\varnothing, \epsilon_1 + - \epsilon_2)\rightarrow \Hom_{\mathcal{H}'}(\mathbf{1}, Q_{\epsilon_1 -+ \epsilon_2})$. 
Since $B(\varnothing, Q_{\epsilon_1 -+ \epsilon_2})$ is linearly independent by the induction hypothesis, 
$B(\varnothing, \epsilon_1\epsilon_2) \cup B(\varnothing, \epsilon_1 + - \epsilon_2)$ is linearly independent as well, which results as the proof that 
$B(\epsilon, \epsilon')$ is the basis of the $\mathbb{k}$-module $\Hom_{\mathcal{H}'}(Q_{\epsilon}, Q_{\epsilon'})$.

\subsubsection{The Karoubi envelope $\mathcal{H}$ of $\mathcal{H}'$}
The Karoubi envelope $\mathcal{H}$ of $\mathcal{H}'$ is a $\mathbb{k}$-linear additive monoidal category whose objects are pairs $(P,e)$, where $P\in \mathcal{H}'$ and $e:P\rightarrow P$ such that $e^2 = e$. 
Morphisms between two objects $(P,e)$ and $(P',e')$ in the Karoubi envelope are maps $f:P\rightarrow P'$ in $\mathcal{H}'$ such that $e'fe=f$. 
From Section~\ref{subsubsection:category-H-prime}, the rings $\End_{\mathcal{H}'}(Q_+^{\otimes n})$ and $\End_{\mathcal{H}'}(Q_-^{\otimes n})$
contain the group algebra $\mathbb{k}[S_n]$ of the symmetric group.  
Thus we use the complete 
symmetrizer 
\[ 
e(n) = \dfrac{1}{n!}\sum_{\sigma\in S_n}\sigma \in \mathbb{k}[S_n] 
\] 
and the complete antisymmetrizer 
\[
e'(n)=\dfrac{1}{n!}\sum_{\sigma\in S_n} \sgn(\sigma)\sigma \in \mathbb{k}[S_n]
\]
to produce idempotents in $\End_{\mathcal{H}'}(Q_{+^n})$ to  
produce objects 
\[ S_+^n := (Q_{+^n}, e(n)), \hspace{4mm}    \Lambda_+^n :=(Q_{+^n},e'(n)), 
\] 
respectively, in the Karoubi envelope $\mathcal{H}$ of $\mathcal{H}'$. 
These objects are viewed as symmetric and exterior powers of the generating functors $Q_+$ and $Q_-$: 
 \[ 
S_+^n := S^n(Q_+),  \hspace{4mm}
\Lambda_+^n := \Lambda^n(Q_+),  \hspace{4mm} 
S_-^n := S^n(Q_-),  \hspace{4mm}
\Lambda_-^n := \Lambda^n(Q_-). 
\] 
We set $S_+^0 \cong S_-^0 \cong \Lambda_+^0 \cong \Lambda_-^0 \cong \mathbf{1}$, where $\mathbf{1}$ is the identity functor in the monoidal category $\mathcal{H}$, and we also set 
\[ 
S_+^n  = S_-^n = \Lambda_+^n = \Lambda_-^n =0 \mbox{ for } n<0. 
\]  
The following proposition is proved by Khovanov in \cite{khovanov2010heisenberg} as Proposition 1: 
\begin{proposition}
Since the following canonical isomorphisms 
\begin{align}
S_-^n \otimes \Lambda_+^m  &\cong (\Lambda_+^m\otimes S_-^n)\oplus (\Lambda_+^{m-1}\otimes S_-^{n-1}), \label{reln1} \\ 
S_-^n \otimes S_-^m  &\cong S_-^m \otimes S_-^n, \label{reln2}  \\ 
\Lambda_+^n \otimes \Lambda_+^m &\cong \Lambda_+^m \otimes \Lambda_+^n,  \label{reln3}
\end{align}
hold in $\mathcal{H}$, 
the following equalities hold in the ring $K_0(\mathcal{H})$: 
\[  
\begin{aligned} 
[S_-^n] [\Lambda_+^m]  &= [\Lambda_+^m] [ S_-^n] + [\Lambda_+^{m-1}] [S_-^{n-1}], \\ 
[S_-^n] [S_-^m]  &= [S_-^m] [ S_-^n], \\ 
[\Lambda_+^n] [\Lambda_+^m] &= [\Lambda_+^m][ \Lambda_+^n]. \\
\end{aligned} 
\]   
\end{proposition}
Thus Khovanov proves the existence of an injection $\gamma:H_{\mathbb{Z}}\rightarrow K_0(\mathcal{H})$ from the integral form $H_{\mathbb{Z}}$ of the Heisenberg algebra into the Grothendieck ring $K_0(\mathcal{H})$ of the Karoubi envelope $\mathcal{H}$ of $\mathcal{H}'$. The map $\gamma$ is given by: 
\[ 
\gamma(a_n) = [S_-^n] \mbox{ and }\gamma(b_m) = [\Lambda_+^m].  
\]

\section{Licata-Savage's construction of the categorification $\mathcal{H}(q)$ of the Heisenberg algebra $H_{\mathbb{Z}}$}
 
\subsection{The Categorification of the Fock Space of the Heisenberg Algebra}\label{subsection:Fock-space-categorification}  
In this section, we review the categorification in \cite{MR3032820} of the Heisenberg algebra in the nondegenerate Hecke algebra setting. 
\subsubsection{The category $\mathcal{H}'(q)$}\label{subsubsection:deformed-Khovanov-category-Licata-Savage}
Licata and Savage slightly modified Khovanov's category $\mathcal{H'}$ to the $q$-deformed additive $\mathbb{k}[q^{\pm 1}]$-linear strict monoidal category $\mathcal{H}'(q)$ of $\mathcal{H}'$ as follows: 
$\mathcal{H}'(q)$ has generating objects $Q_+$ and $Q_-$ as before, with morphisms 
given by the space $\Hom_{\mathcal{H}'(q)}(Q_{\epsilon}, Q_{\epsilon}')$, which is the $\mathbb{k}[q^{\pm 1}]$-module generated by planar diagrams modulo local relations: 

\begin{equation}\label{eq:relations-qline1}
\xy  
(6,-12)*{} ; (6,12)*{}**\crv{(-12,0) } ?<*\dir{<} ;
(-6,-12)*{} ; (-6,12)*{}**\crv{(12,0)} ?>*\dir{>}
\endxy= q \quad \xy 
(6,-12)*{} ; (6,12)*{}**\crv{  } ?<*\dir{<} ;
(-6,-12)*{} ; (-6,12)*{}**\crv{ } ?>*\dir{>}
\endxy\hskip 60pt \xy 
(6,-12)*{} ; (6,12)*{}**\crv{(-12,0) } ?>*\dir{>} ;
(-6,-12)*{} ; (-6,12)*{}**\crv{(12,0)} ?<*\dir{<}
\endxy=q \quad  \xy 
(6,-12)*{} ; (6,12)*{}**\crv{  } ?>*\dir{>} ;
(-6,-12)*{} ; (-6,12)*{}**\crv{ } ?<*\dir{<}
\endxy\quad- \hspace{2mm}  q \quad \xy 
(6,12)*{} ; (-6,12)*{}**\crv{ (0,0) } ?<*\dir{<} ;
(-6,-12)*{} ; (6,-12)*{}**\crv{ (0,0)} ?<*\dir{<}
\endxy
\end{equation}

\begin{equation}\label{eq:relations-qline2}
\xy
(6,-12)*{} ; (6,12)*{}**\crv{(-12,0) } ?>*\dir{>} ;
(-6,-12)*{} ; (-6,12)*{}**\crv{(12,0)} ?>*\dir{>}
\endxy= q \quad  \xy
(6,-12)*{} ; (6,12)*{}**\crv{  } ?>*\dir{>} ;
(-6,-12)*{} ; (-6,12)*{}**\crv{ } ?>*\dir{>}
\endxy \hspace{2mm} + \hspace{2mm} (q-1) \quad \xy 
(8,-12)*{} ; (-8,12)*{}**\crv{  } ?>*\dir{>} ;
(-8,-12)*{} ; (8,12)*{}**\crv{ } ?>*\dir{>}
\endxy
\end{equation}

\begin{equation}\label{eq-relations-qline3}
\xy
(8,-12)*{} ; (-8,12)*{}**\crv{  } ?>*\dir{>} ;
(0,-12)*{} ; (0,12)*{}**\crv{(12,0)} ?>*\dir{>};
(-8,-12)*{} ; (8,12)*{}**\crv{ } ?>*\dir{>}
\endxy=\quad \xy
(8,-12)*{} ; (-8,12)*{}**\crv{  } ?>*\dir{>} ;
(0,-12)*{} ; (0,12)*{}**\crv{(-12,0) } ?>*\dir{>} ;
(-8,-12)*{} ; (8,12)*{}**\crv{ } ?>*\dir{>}
\endxy
\end{equation}

\begin{equation}\label{eq:relations-qline4}
\begin{tikzpicture}[
baseline=-3pt,
decoration={
  markings,
  mark=at position 0.5 with {\arrow{>}}
  }
]
\draw[postaction=decorate] 
  (0,0) circle [radius=13pt];
\end{tikzpicture}
=1\qquad\qquad
\begin{tikzpicture}[baseline=-3pt]
\draw[->] 
  (0,-1) to[out=90,in=90,looseness=2] 
  (-1,0) to[out=-90,in=-90,looseness=2]
  (0,1);
\end{tikzpicture}
=0, 
\end{equation}
where the upward crossing 
has inverse 

\begin{equation}\label{eq:relations-qline2}
\xy
(8,-12)*{} ; (-8,12)*{}**\crv{  } ?>*\dir{>} ;
(-8,-12)*{} ; (8,12)*{}**\crv{ } ?>*\dir{>} 
\endxy= q^{-1} \quad  \xy
(8,-12)*{} ; (-8,12)*{}**\crv{  } ?>*\dir{>} ;
(-8,-12)*{} ; (8,12)*{}**\crv{ } ?>*\dir{>}
\endxy 
+ \hspace{2mm} (q^{-1} - 1 ) \quad 
 \xy
(6,-12)*{} ; (6,12)*{}**\crv{  } ?>*\dir{>} ;
(-6,-12)*{} ; (-6,12)*{}**\crv{ } ?>*\dir{>}
\endxy.  
\end{equation}
 
In $\mathcal{H}'(q)$, we still have 
\[
Q_{-+} \cong Q_{+-} \oplus \mathbf{1}, 
\] 
for which $[Q_-][Q_+] = [Q_+][Q_-] + 1$ follows in the Grothendieck group of $\mathcal{H}'(q)$. 
From this, we have an isomorphism 
\[ 
\psi_0: \mathbb{k}[q^{\pm 1}][c_0,c_1,c_2, \ldots ] \stackrel{\simeq}{\rightarrow} \End_{\mathcal{H}'(q)}(\mathbf{1}), 
\]  
just as we have seen before in Equation~\eqref{eq:psi_0-iso}.  

Let 
\[ 
H_n^{\aff}  = H_n^A \otimes_{\mathbb{k}[q^{\pm 1}]}\mathbb{k}[q^{\pm 1}][X_1^{\pm 1}, \ldots, X_n^{\pm 1}], 
\] 
where $H_n^A = H_n^A(q)$ is the Hecke algebra of type $A$. 
The Hecke algebra $H_n^A = H_n^A(q)$ of type $A$ is a $\mathbb{k}[q^{\pm 1}]$-algebra with generators $T_1,\ldots, T_{n-1}$ and relations: 
\[ 
\begin{aligned}
(T_i+1)(T_i-q) &= 0,  \\ 
T_iT_j &= T_j T_i \hspace{4mm} \mbox{ if }|i-j| > 1,   \\ 
T_i T_{i+1} T_i &= T_{i+1} T_i T_{i+1} \hspace{4mm} \mbox{ for } i= 1,\ldots, n-2. \\  
\end{aligned}
\] 
Here, we set $H_0^A= H_1^A = \mathbb{k}[q^{\pm 1}]$. 
Note that the subalgebras $H_n^A$ and $\mathbb{k}[X_1^{\pm 1}, \ldots, X_n^{\pm 1}]$ generate the nondegenerate affine Hecke algebra $H_n$ (cf. Section~\ref{subsubsection:nondegenerate-affine-Hecke-algebras}).

Now for $1\not= q\in \mathbb{k}^*$, we replace the generators $X_i$ with 
\[ 
Y_i = (q-1) X_i - \dfrac{q}{q-1}. 
\] 
It is straightforward to check that $Y_i$ satisfy 
\[
\begin{aligned} 
Y_i T_k &= T_k Y_i  \mbox{ for } i \not= k, k+1,\\ 
T_i Y_{i+1} &= Y_i T_i + (q-1)Y_{i+1} + q, \\ 
Y_{i+1} T_i &= T_i Y_i + (q-1) Y_{i+1} + q.  \\ 
\end{aligned}
\] 

Let $H_n^+$ be the $\mathbb{k}[q^{\pm 1}]$-algebra with generators $T_i$ and $Y_j$ and defining relations as given above. 
Then 
\[ 
H_n^+ \cong H_n^{A} \otimes_{\mathbb{k}} \mathbb{k}[X_1,\ldots, X_n] \subseteq H_n^{\aff}. 
\] 
It follows from Lemma 3.4 in \cite{MR3032820} the morphism 
\[ 
\phi_n:H_n^+ \rightarrow \End_{\mathcal{H}'(q)}(Q_{+^n})  
\] 
exists, which takes the generator $T_k$ to the crossing of the $k$-th and the $(k+1)$-st strands and $Y_k$ to the right curl (or a dot) on the $k$-th strand. 
We could now deduce the existence of the map by taking the tensor product 
\[  
\psi_n = \phi_n \otimes \psi_0 : 
H_n^+ \otimes_{\mathbb{k}[q^{\pm 1}]} \mathbb{k}[q^{\pm 1}][c_0,c_1, c_2, \ldots] \stackrel{\cong}{\longrightarrow} \End_{\mathcal{H}'(q)}(Q_{+^n}) 
\] 
of $\phi_n$ and $\psi_0$, where the dotted clockwise circles corresponding to elements of 
$\mathbb{k}[c_0,c_1, c_2, \ldots]$ are placed to the right of the diagrams corresponding to elements of $H_n^+$. 

\subsubsection{The Karoubi envelope $\mathcal{H}(q)$ of $\mathcal{H}'(q)$}
The objects of $\mathcal{H}(q)$ are pairs $(Q_{\epsilon},e)$ where $e:Q_{\epsilon}\rightarrow Q_{\epsilon}$ is an idempotent, $e^2=e$. 
Morphisms $(Q_{\epsilon}, e)\rightarrow (Q_{\epsilon'}, e')$ between two objects $(Q_{\epsilon}, e)$ and $(Q_{\epsilon'}, e')$ are morphisms 
$f:Q_{\epsilon}\rightarrow Q_{\epsilon'}$ in $\mathcal{H}'(q)$ such that 
\[
\xymatrix@-1pc{
Q_{\epsilon} \ar[dd]_e \ar[rrdd]^f \ar[rr]^f & & Q_{\epsilon'} \ar[dd]^{e'}\\ 
& & \\ 
Q_{\epsilon} \ar[rr]^f & & Q_{\epsilon'} \\ 
}
\] 
commutes. 
Similar as before, since upward oriented crossings satisfy the relations of the Hecke algebra  $H_n^A$ of type $A$, and since downward oriented crossings also satisfy the Hecke algebra relations, we have  
\begin{equation}\label{eq:embedding-of-Hecke-alg-type-A-into-diagrammatic-alg}
H_n^A \rightarrow \End_{\mathcal{H}'(q)}(Q_{+^n}) \quad \mbox{ and } \quad 
H_n^A \rightarrow \End_{\mathcal{H}'(q)}(Q_{-^n}).  
\end{equation}
Now, let 
\[ 
e_q(n)  = \dfrac{1}{[n]_q!} \sum_{w\in S_n}T_w,  \quad
e_q'(n) = \dfrac{1}{[n]_{q^{-1}}!}\sum_{w\in S_n} (-q)^{-l(w)}T_w, \quad
\mbox{ where } [n]_q := \sum_{i=0}^{n-1}q^i.  
\]  
be $q$-symmetrizer and $q$-antisymmetrizer, which are idempotents in $H_n^A$. 
Using the same notation to denote their images in 
$\End_{\mathcal{H}'(q)}(Q_{+^n})$ and
$\End_{\mathcal{H}'(q)}(Q_{-^n})$, 
\[ 
S_+^n(q) = (Q_{+^n}, e_q(n)), \quad 
S_-^n(q) = (Q_{-^n}, e_q(n)), \quad 
\Lambda_+^n(q) = (Q_{+^n}, e_q'(n)), \quad 
\Lambda_-^n(q) = (Q_{-^n}, e_q'(n)) 
\] 
in the Karoubi envelope $\mathcal{H}(q)$. 

The following is proved in \cite{MR3032820} as Theorem 4.7: 
\begin{theorem}\label{theorem:Licata-Savage-equivalence-Sn-Lambda}
In the category $\mathcal{H}(q)$, we have 
\[ 
\begin{aligned} 
S_-^n (q) \otimes S_-^m(q) &\cong S_-^m (q)\otimes S_-^n(q), \\ 
\Lambda_+^n(q) \otimes \Lambda_+^m (q)&\cong \Lambda_+^m(q) \otimes \Lambda_+^n(q), \\ 
S_-^n(q) \otimes \Lambda_+^m (q) &\cong (\Lambda_+^m(q) \otimes S_-^n(q)) \oplus (\Lambda_+^{m-1}(q)\otimes S_-^{n-1}(q)). \\ 
\end{aligned}
\] 
\end{theorem}

\subsubsection{The Heisenberg $2$-category}\label{subsubsection:Heisenberg-2-cat}
The objects of the Heisenberg $2$-category $\mathfrak{H}'(q)$ are integers and the space  $\Hom_{\mathfrak{H}'(q)}(n,m)$ of morphisms is the full subcategory of the graphical category $\mathcal{H}'(q)$ containing the objects $Q_{\epsilon}$, where $\epsilon=\epsilon_1\cdots \epsilon_l$, such that 
\[ 
m-n = \# \{ i: \epsilon_i = + \} - \# \{i: \epsilon_i = - \}. 
\] 
We define $\mathfrak{H}(q)$ to be the $2$-category with integers as objects and the set $\Hom_{\mathfrak{H}(q)}(n,m)$ of morphisms as the Karoubi envelope of $\Hom_{\mathfrak{H}'(q)}(n,m)$.

\subsubsection{A modified Heisenberg algebra}\label{subsubsection:modified-Heisenberg-algebra}
We grade the Heisenberg algebra $H_{\mathbb{Z}}$ as follows: 
\[
\deg b_m = m, \quad 
\deg a_m = -m, \quad 
m\in \mathbb{Z}_{>0}. 
\] 
For $n\in \mathbb{Z}$, we let $H_{\mathbb{Z}}(n)$ to be the subspace of $H_{\mathbb{Z}}$ consisting of homogeneous elements of degree $n$. 
Since a graded ring gives rise to a preadditive category with an object for each graded piece, we have the following:  
\begin{definition}
Let $\dot{H}_{\mathbb{Z}}$ be the preadditive category whose objects are integers and morphisms from $n$ to $m$ form the space $H_{\mathbb{Z}}(m-n)$. 
\end{definition} 

\subsubsection{The modified Heisenberg algebra categorifies the bosonic Fock space}\label{subsubsection:Fock-space-categorification}
Since there is a bijection between partitions of $n$ and isomorphism classes of representations of $H_n^A$, let us construct this bijection as 
\begin{center} 
a partition $\lambda = (\lambda_1, \ldots, \lambda_k)$ of $n$ and the unique irreducible representation $L_{\lambda}$ induced from the trivial representation of the parbolic subgroup 
$H_{\lambda}^A := H_{\lambda_1}\times \ldots \times H_{\lambda_k}$ of $H_n^A$, and the representation induced from the sign representation of the parabolic subgroup $H_{\lambda^*}^A$, where $\lambda^*$ is the dual partition. 
\end{center}
Let $e_{\lambda}\in H_n^A$ be the corresponding Young idempotent such that $L_{\lambda}=H_n^A e_{\lambda}$. From this point forth, we will view $e_{\lambda} = e_{\lambda,q}$ as an idempotent in $\End_{\mathcal{H}(q)}(Q_{+^n})$ 
or in 
$\End_{\mathcal{H}(q)}(Q_{-^n})$  
via the embeddings in \eqref{eq:embedding-of-Hecke-alg-type-A-into-diagrammatic-alg}.

Now given any partition $\lambda$, we define $Q_{+,\lambda}(q) := (Q_{+^n}, e_{\lambda,q} )$ 
and 
$Q_{-,\lambda}(q) := (Q_{-^n}, e_{\lambda,q})$ in $\mathfrak{H}(q)$. 
In particular, 
\[
S_+^n (q) = Q_{+,(n)}(q), \quad 
\Lambda_+^n (q)= Q_{+, (1^n)}(q), \quad 
S_-^n (q)= Q_{-,(n)}(q), \quad 
\Lambda_-^n(q) = Q_{-,(1^n)}(q). 
\] 

We define a functor $\mathbf{F}:\dot{H}_{\mathbb{Z}}\rightarrow K_0(\mathfrak{H}(q))$, where 
$\mathbf{F}(n)=n$ for each object $n\in \mathbb{Z}$ and on morphisms, 
\[ 
\mathbf{F}(a_n)= [S_-^n(q)], \quad 
\mathbf{F}(b_n) = [\Lambda_+^n(q)].   
\] 

\begin{theorem}
For $q$ not a root of unity, the functor $\mathbf{F}$ is faithful. 
\end{theorem}

It remains a conjecture that the functor $\mathbf{F}$ is an equivalence of categories
for $q$ not a root of unity. 

\begin{definition}\label{defn:2-category-U}
Let $\mathfrak{U}$ be the $2$-category whose objects are $\mathbb{N}\cup \{ \nabla \}$, $1$-morphisms from $n$ to $m$ are functors from $H_n^A$-mod to $H_m^A$-mod that are direct summands of compositions of induction and restriction functors (the $1$-morphism from $\nabla$ to $\nabla$ is the identity), and there are no $1$-morphisms from $n$ to $\nabla$ or from $\nabla$ to $n$, and $2$-morphisms are natural transformations of functors. 
\end{definition}

Since we can view $H_k^A$ as a subalgebra of $H_n^A$ via the embedding $T_i\mapsto T_i$, we can view $H_n^A$ as an $(H_k^A,H_l^A)$-bimodule for $1\leq k,l\leq n$. We will write ${}_{k}(n)_{l}$ to denote this bimodule. If $k$ or $l$ is equal to $n$, we will omit the subscript. 
For any $(H_m^A, H_n^A)$-bimodule $M$, we obtain a functor 
\begin{center} 
$H_n^A$-mod $\rightarrow$ $H_n^A$-mod, $\quad$ $V\mapsto M\otimes V$, 
\end{center} 
and any homomorphism $M_1\rightarrow M_2$ of $(H_m^A, H_n^A)$-bimodules gives rise to a natural transformation between the corresponding functors. 
In particular,  the bimodules 
$(n)_{ n-1}$ and ${}_{n-1}(n)$ correspond to induction and restriction functors, respectively.

Now, we define a $2$-functor 
$\mathbf{A}:\mathfrak{H}'(q)\rightarrow \mathfrak{U}$ by sending 
$\mathbf{A}(n)=n$ if $n\geq 0$ and $\mathbf{A}(n)=\nabla$ if $n<0$, 
on $1$-morphisms $\mathbf{A}$ maps $Q_{\epsilon}\in \Hom_{\mathfrak{H}'(q)}(n,m)$ for a sequence $\epsilon = \epsilon_1\epsilon_2\cdots \epsilon_k$ to the tensor product of induction and restriction bimodules, where $+$ corresponds to an induction while a $-$ corresponds to a restriction (and if, for some $k$, the last $k$ terms of $\epsilon$ contain at least $(n+1)$ more $-$'s than $+$'s, then $\mathbf{A}(Q_{\epsilon}) = 0$), and on $2$-morphisms, 
$\mathbf{A}$ maps a planar diagram (a $1$-morphism of $\mathfrak{H}'(q)$) to the natural transformation corresponding to this bimodule map.

For example, for $Q_{++---+--}\in \Hom_{\mathfrak{H}'(q)}(n,n-2)$, 
\[ 
\mathbf{A}(Q_{++---+--}) = 
{}_{}(n-2){}_{n-3}(n-3){}_{n-4}(n-3){}_{n-3}(n-2){}_{n-2}(n-1){}_{n-1}(n-1){}_{n-2}(n-1) {}_{n-1}(n)
\]

Since $\mathfrak{U}$ is idempotent complete, the functor $\mathbf{A}$ induces 
a functor $\mathbf{A}:\mathfrak{H}(q)\rightarrow \mathfrak{U}$ 
on the Karoubi envelope $\mathfrak{H}(q)$ of $\mathfrak{H}'(q)$. 
This induces the functor 
\[ 
[\mathbf{A}]:K_0(\mathfrak{H}(q))\rightarrow K_0(\mathfrak{U}). 
\] 

Since $\Hom_{\mathfrak{U}}(n,m)$ are direct summands of finite compositions of induction and restriction functors from $H_n^A$-mod to $H_m^A$-mod, we obtain the functor 
\begin{center}
$\theta : K_0(\mathfrak{U})\rightarrow \bigoplus_{n,m\in \mathbb{N}} \Hom_{\mathbb{Z}}
(K_0(H_n^A$-mod$), K_0(H_m^A$-mod$))$, 
\end{center}
where the bigraded ring is viewed as a category, with objects being nonnegative integers and morphisms from $n$ to $m$ are $ \Hom_{\mathbb{Z}}
(K_0(H_n^A$-mod$), K_0(H_m^A$-mod$))$.

If the characteristic of $\mathbb{k}$ equals zero and $q$ is not a nontrivial root of unity, then 
$\mathbf{A}:\mathfrak{H}(q)\rightarrow \mathfrak{U}$ is a categorification of the bosonic Fock space representation of the Heisenberg algebra.

For a vector $[M]\in K_0(H_n^A$-mod$)$, the composition of functors is explicitly given as 
\[ 
\begin{aligned}
(\theta[\mathbf{A}])([Q_{+,\mu}])([M]) &= 
\left[ \Ind_{H_{|\mu|}^A\times H_n^A }^{H_{|\mu|+n}^A} (L_{\mu}\otimes M) \right], \\ 
(\theta [\mathbf{A}])([Q_{-,\lambda}])([M]) &= 
\begin{cases} 
0 &\mbox{ if } |\lambda| > n,  \\ 
\Hom_{H_{|\lambda|}^A}(L_{\lambda}, M)\in H_{n-|\lambda|}^A\mbox{-mod} &\mbox{ if }
|\lambda|\leq n, \\ 
\end{cases}
\end{aligned}
\]  
 where $|\lambda| = \sum_{i=1}^k \lambda_i$. 
In $\Hom_{H_{|\lambda|}^A}(L_{\lambda}, M)$, we restrict $M$ to an $H_{|\lambda|}^A\times H_{n-|\lambda|}^A$-module and take homomorphisms from the irreducible module $L_{\lambda}$ for $H_{|\lambda|}^A$, which is also an $H_{n-|\lambda|}^A$-module.

If the characteristic of $\mathbb{k}$ equals zero and $q$ is not a nontrivial root of unity, then 
$\mathbf{A}:\mathfrak{H}(q)\rightarrow \mathfrak{U}$ is a categorification of the bosonic Fock space representation of the Heisenberg algebra.

\section{Chuang-Rouquier's $\mathfrak{sl}_2$-categorification $\mathcal L_n$ of finite dimensional irreducible representations}
\subsection{Minimal Categorification of Simple $\mathfrak{sl}_2$-Modules}\label{subsection:minimal-categorification}
We discuss minimal categorification of (finite-dimensional) simple $\mathfrak{sl}_2$-modules. 
Fix $q\in \mathbb{k}^*$ and $a\in \mathbb{k}$ with $a\not=0$ if $q\not=1$. Let $n\geq 0$ and put $x_i=X_i-a$. 
Let $\mathfrak{m}_n$ be the maximal ideal of $P_n$ generated by $x_1,\ldots, x_n$ and 
let 
$\mathfrak{n}_n=(\mathfrak{m}_n)^{\mathfrak{S}_n}$. 
Define $\overline{H}_n = H_n/(H_n\mathfrak{n}_n)$ and 
$\overline{P}_n = P_n/(P_n \mathfrak{n}_n)$. 
Let $\phi:H_n\rightarrow \overline{H}_n$, where for $0\leq i\leq n$, 
$\overline{H}_{i,n}:=\im \phi|_{H_i}$.  
Write $\overline{P}_{i,n}=P_i/(P_i\cap (P_n\mathfrak{n}_n))$. Then 
$\overline{H}_{i,n}\simeq H_i^f \otimes \overline{P}_{i,n}$, where $H_i^f$ is the subalgebra of $H_i$ generated by 
$T_1,\ldots, T_{i-1}$.  It is the Hecke algebra of the symmetric group $\mathfrak{S}_i$.

$B_i=\overline{H}_{i,n}$ 
for $0\leq i\leq n$. 
Put 
$\mathcal{A}(n)_{\lambda}=B_{(\lambda+n)/2}$-mod and 
$\mathcal{A}(n) = \bigoplus_i B_i$-mod, 
$E= \bigoplus_{i< n}\Ind_{B_i}^{B_{i+1}}$ and  
$F = \bigoplus_{i>0}\Res_{B_{i-1}}^{B_i}$. 
Note that 
$\Ind_{B_i}^{B_{i+1}}= B_{i+1}\otimes_{B_i}-$ and 
$\Res_{B_i}^{B_{i+1}} = B_{i+1}\otimes_{B_{i+1}}-$ are left and right adjoint functors. 

Continuing from Chuang-Rouquier's paper, we have 
$EF(B_i)\simeq B_i \otimes_{B_{i-1}}B_i \simeq i(n-i+1)B_i$ and 
$FE(B_i) \simeq B_{i+1}\simeq (i+1)(n-i)B_i$ as left $B_i$-modules. 
That is, applying the restriction functor and then the induction functor on $B_i$ gives $B_i$ of rank $i(n-i+1)$, 
while 
applying the induction functor and then the restriction functor on $B_i$ gives $(i+1)(n-i)$ copies of $B_i$. 
Thus, 
$[e,f]([B_i])= (i(n-i+1)-(i+1)(n-i))[B_i] =(2i-n)[B_i]$. 
Since $\mathbb{Q}\otimes K_0(\mathcal{A}(n)_{\lambda})=\mathbb{Q}[B_{(\lambda+n)/2}]$, 
$ef-fe$ acts on the Grothendieck group $K_0(\mathcal{A}(n)_{\lambda})$ by $\lambda$. 
Thus, $e$ and $f$ induce an $\mathfrak{sl}_2$-action on the abelian group $K_0(\mathcal{A}(n))$, giving us a weak $\mathcal{I}_{\mathfrak{g}}(\mathfrak{sl}_2)$-categorification. 

The image of $X_{i+1}$ in $B_{i+1}$ gives an endomorphism of $\Ind_{B_i}^{B_{i+1}}$ by right multiplication on $B_{i+1}$. Taking the sum over all $i$, we get $X\in \End(E)$. 
Similarly, the image of $T_{i+1}$ in $B_{i+2}$ gives an endomorphism of $\Ind_{B_i}^{B_{i+2}}$ and taking the sum over all $i$, we get $T\in \End(E^2)$, which gives us an $\mathcal{I}_{\mathfrak{g}}(\mathfrak{sl}_2)$-categorification. The representation on $K_0(\mathcal{A}(n))$ is the simple $(n+1)$-dimensional $\mathfrak{sl}_2$-module.

We let $\mathcal L_n$ be the minimal categorification of $L_n$ given by Chuang and Rouquier.  That is, $\mathcal L_n=\mathcal A(n)$. 
%

\section{Explicit description of the categories $\mathcal{L}_n$ and $\mathcal{V}_0$}

The category $\mathcal{L}_n$ is the minimal categorification of $L_n$, whose objects consist of simple modules. 
That is, Lemma 5.23 in \cite{MR2373155} shows that 
$\mathcal{L}_n = \mathcal{A}(n)$,  
where $\mathcal{A}(n)=\bigoplus_i B_i$-$\Mod$, where $B_i=\overline{H}_{i,n}$ is the image of $H_i$ in $\overline{H}_n$. Morphisms of $\mathcal{L}_n$ consist of homomorphisms $\Hom_{\mathcal{A}(n)}(M,N)$ of simple $\mathfrak{sl}_2$-modules. 
\color{red}  
\begin{lemma}[\cite{MR2373155}, Lemma 5.23]\label{lemma:categorify-L_n} 
Let $U$ be a simple object of $\mathcal{A}$ such that $FU=0$. Let $n=h_+(U)$, $i< n$, and $B_i=\overline{H}_{i,n}$. 
The composition of $\eta(E^iU)\otimes 1:E^iU\otimes_{B_i} B_{i+1}\rightarrow FE^{i+1}U\otimes_{B_i} B_{i+1}$ with the action map $FE^{i+1}U\otimes_{B_i} B_{i+1}$ with the action map $FE^{i+1}U\otimes_{B_i} B_{i+1}\rightarrow FE^{i+1}U$ is an isomorphism 
\[ 
E^iU\otimes_{B_i} B_{i+1} \stackrel{\cong}{\rightarrow} FE^{i+1}U. 
\]  
\end{lemma}

Let $I_n$ be the set of isomorphism classes of simple objects $U$ of $\mathcal{A}$ such that $FU=0$ and $h_+(U)=n$. 
We have a morphism of $\mathcal{I}_{\mathfrak{g}}(\mathfrak{sl}_2)$-categorifications 
\[ 
\sum_{\stackbin{n}{U\in I_n}}\mathbb{Q}\otimes K_{\oplus}(\mathcal{A}(n)-\proj) \stackrel{\sim}{\rightarrow}\mathbb{Q}\otimes K_{\oplus}(\mathcal{A})
\] 
giving a canonical decomposition of $\mathbb{Q}\otimes K_{\oplus}(\mathcal{A})$ into simple summands, giving us minimal categorifications 
$\mathcal{A}(n)$. 
\color{black}

\begin{proposition}\label{proposition:equiv-of-categories-projective}
Assume $\mathbb{Q}\otimes K_{\oplus}(\mathcal{A})$ is a simple $\mathfrak{sl}_2$-module of dimension $n+1$. Let $U$ be the unique simple object of $\mathcal{A}$ with $FU=0$. Then 
$R_U:\mathcal{A}(n)\rightarrow \mathcal{A}$ is an equivalence of categories if and only if $U$ is projective. 
\end{proposition}

\begin{proof}
\color{red}
Prove or justify. 
\color{black}
\end{proof}

\subsection{Quotient Categories}\label{subsection:quotient-categories}  

A congruence relation $R$ on a category $\mathcal C$ is an assignment to each pair $A$ and $B$ of objects in the category $\mathcal C$ an equivalence relation 
$R_{A,B}$ on the set $\text{Hom}(A,B)$ 
such that the following holds:  
\begin{center}
if $\phi_i\in \text{Hom}(A,B)$ are congruent and 
$\psi_i\in \text{Hom}(B,C)$ are congruent, where $i=1, 2$, then $\psi_1\phi_1,\psi_1\phi_2,\psi_2\phi_1,\psi_2\phi_1$ are in $R_{A, C}$.  
\end{center}
If $R$ is a congruence relation on a category $\mathcal C$, then one can form the quotient category $\mathcal C/R$ where the objects of $\mathcal C/R$ are the same objects as in $\mathcal C$, but the set of morphisms is 
$\text{Hom}_{\mathcal C/R}(A,B) := \text{Hom}_{\mathcal{C}}(A,B)/R$.


\subsection{The category $\mathcal{V}_0$}
 In the category $\mathcal H'$ every object can be written as a sum of objects of the form $(Q_+)^k(Q_-)^l$, $k\geq0$ and $l\geq 0$ due to the relations \eqnref{reln1}-\eqnref{reln3}.  
Consider the additive full subcategory $\mathcal N$ of $\mathcal H'$ that consists of summands of the form $(Q_+)^k(Q_-)^l$, $k\geq0$ and $l>0$  and define $\mathcal V_0$ to be the quotient category $\mathcal H'/\mathcal N$.  Here
$$
R_{(Q_+)^m(Q_-)^n, (Q_+)^k(Q_-)^l}=\begin{cases} \text{Hom}_{\mathcal H'(q)}((Q_+)^m(Q_-)^n,(Q_+)^k(Q_-)^l) &\quad \text{ if } n>0\text{ or } l>0  \\
 0&\quad \text{ if } n=0 \text{ and }l=0.\end{cases}
$$
For $A\cong \oplus_{m,n}((Q_+)^m(Q_-)^n)^{\oplus a_{m,n}}$ and $
B\cong \oplus_{k,l}((Q_+)^k(Q_-)^l)^{\oplus b_{k,l}}$, with $a_{m,n},b_{k,l}\in \mathbb N$, we have 
due to additivity 
\begin{equation}
\text{Hom}_{\mathcal H'(q)}(A,B)\cong \oplus_{m,n,k,l}\text{Hom}_{\mathcal H'(q)}((Q_+)^m(Q_-)^n,(Q_+)^k(Q_-)^l) ^{\oplus a_{m,n}b_{k,l}}.\label{homAB}
\end{equation}
On the right hand side we have the $\mathbb k[q^{\pm 1}]$-submodule 
$$
\oplus_{m,n,k,l}R_{(Q_+)^m(Q_-)^n,(Q_+)^k(Q_-)^l} ^{\oplus a_{m,n}b_{k,l}} 
$$
and under the isomorphism \eqnref{homAB},  we identify this submodule with what we define to be the submodule $R_{A,B}$ inside of $\text{Hom}_{\mathcal H'(q)}(A,B)$.  

Then in the quotient category, we have
\begin{align*}
\text{Hom}_{\mathcal H'(q)/\mathcal N(q)}((Q_+)^m(Q_-)^n, (Q_+)^m(Q_-)^n)=\begin{cases}
 0 &\quad \text{ if } m\geq 0,n>0, \\
  \text{End}_{\mathcal H'(q)}((Q_+)^m)  &\quad \text{ if } m\geq 0,n=0, 
  \end{cases}
\end{align*}
and more generally due to the fact that $\mathcal H'$ is additive, 
\begin{align*}
\text{Hom}&_{\mathcal H'(q)/\mathcal N(q)}(A,B)\\
&\cong \text{Hom}_{\mathcal H'(q)}(\oplus_{m,n}((Q_+)^m(Q_-)^n)^{\oplus a_{m,n}},\oplus_{k,l}((Q_+)^k(Q_-)^l)^{\oplus b_{k,l}})/\oplus_{m,n,k,l}R_{(Q_+)^m(Q_-)^n,(Q_+)^k(Q_-)^l} ^{\oplus a_{m,n}b_{k,l}}  \\ 
&\cong \displaystyle{\oplus_{m}} \text{End}_{\mathcal H'(q)}((Q_+)^m) ^{\oplus a_{m,0}b_{m,0}}. 
\end{align*}
Note that the quotient category 
$\mathcal H'(q)/\mathcal N(q)$ is additive. 

The category $\mathcal V_0(i)$ consists of the object $Q_{+^i}=Q_+\otimes \cdots \otimes Q_+$,  $i$ factors for $i\geq 1$, $\mathcal V_0(0)=I$, and morphisms are in the $\mathbb k[q^{\pm 1}]$-module
$$
\text{End}_{\mathcal H'(q)}(Q_{+^i})\cong H_i^+\otimes_{\mathbb k[q^{\pm 1}]}\mathbb k[q^{\pm 1}][c_0,c_1,c_2,c_3,\dots].
$$
Then $\mathcal V_0=\oplus_{i\geq 0} \mathcal V_0(i)$. 
Observe that the functors of tensoring on the left by $F=Q_+^1$ and $-E=Q_+Q_{-^{2}}$ preserve the subcategory $\mathcal N(q)$. The pair of additive functors $(E,F)$ are defined in the induced action.  
\color{magenta} 
What about the action of $X$ and $T$ on the hom spaces?
\color{black}
 
 \begin{proposition}  The category $\mathcal{V}_0$  is an $\mathcal{I}_{\mathfrak{g}}(\mathfrak{sl}_2)$-categorification of the Verma module $V_0$.
 \end{proposition}
 
 \color{magenta}
 \begin{proof}
  We have by induction 
 \end{proof}
 \color{black}
 
 \begin{theorem}
 Given any other $\mathcal{I}_{\mathfrak{g}}(\mathfrak{sl}_2)$-categorification $\mathcal W$ of the Verma module $V_0$ and indecomposable object $U$ in $\mathcal W$ with $E(U)=0$, there exists a unique embedding of $\mathcal{I}_{\mathfrak{g}}(\mathfrak{sl}_2)$-categories $\phi:\mathcal{V}_0\to \mathcal W$ such that $\phi(I)=U$. 
\end{theorem} 
%
%
%
%
%
%
%
%

\color{black}

 \section{The outer tensor product of $\mathcal{L}_n$ and $\mathcal{V}_0$}\label{section:tensor-product-of-modules} 
From Licata and Savage's survey paper  \cite{MR3024895} and Khovanov's \cite{khovanov2010heisenberg}, 
the outer tensor product of  $\mathcal V_0$ and $\mathcal L_n$ exists since $\mathcal V_0$ and $\mathcal L_n$ are $\mathbb{k}$-linear categories whose objects have finite length.

Definition~\ref{defn:tensor-prod-of-additive-cat} follows from \cite{MR1797619} (Definition 1.1.15): 
\begin{definition}\label{defn:tensor-prod-of-additive-cat}
Let $\mathcal{C}_1$ and $\mathcal{C}_2$ be additive categories over $\mathbb{k}$. Their tensor product $\mathcal{C}_1\boxtimes \mathcal{C}_2$ is the category whose objects and morphisms are the following: 
\[
\begin{aligned}
&\Ob(\mathcal{C}_1\boxtimes \mathcal{C}_2)\mbox{ = finite sums of the form }\bigoplus_i X_i\boxtimes Y_i, \mbox{ where } X_i\in\Ob(\mathcal{C}_1), Y_i\in \Ob(\mathcal{C}_2),  \\ 
&\Hom_{\mathcal{C}_1\boxtimes \mathcal{C}_2}
\left(\bigoplus_i X_i\boxtimes Y_i, \bigoplus_j X_j'\boxtimes Y_j'\right) = \bigoplus_{i,j}  \Hom_{\mathcal{C}_1}(X_i,X_j')\boxtimes \Hom_{\mathcal{C}_2}(Y_i,Y_j'). 
\end{aligned}
\] 
\end{definition}

\begin{proposition}
The tensor product category $\mathcal{C}_1\boxtimes \mathcal{C}_2$ of additive categories is additive. 
\end{proposition}

%

See Sections~\ref{section:affine-Hecke-I-g-sl2} and~\ref{section:subcategories-Vs-Tr} for a further discussion on the tensor product category. 

\section{The structure of affine Hecke algebra and the $\mathcal I_{\mathfrak{g}}(\mathfrak{sl}_2)$-category}\label{section:affine-Hecke-I-g-sl2}
\subsection{Functors $E$ and $F$}  
The category $\mathcal{L}_n$ is a direct sum of the categories $B_i$-$\Mod$, which we will denote each summand category by $\mathcal L_n(i)$.  Similarly $\mathcal V_0$ is the direct sum of the categories $\mathcal V_0(i)$.
We define the functor $E$ on each summand of the category $\mathcal L_n\boxtimes \mathcal V_0=\boxplus_{i,j\geq 0} \mathcal L_n(i)\boxtimes \mathcal V_0(j)$ as the functor 
$$
E:\mathcal L_n(i)\boxtimes \mathcal V_0(j)\to 
\big(\mathcal L_n(i+1)\boxtimes  \mathcal V_0(j) \big) \boxplus 
\big( \mathcal L_n(i)\boxtimes \mathcal V_0(j+1)\big),
$$
where
$$
E(M\boxtimes N):= E(M)\boxtimes N + M\boxtimes E(N),\quad E(f\boxtimes g)=E(f)\boxtimes g+  f\boxtimes E(g),
$$
for $M\in \mathcal L_n(i)$ and $N\in\mathcal V_0(j)$, $f\in \text{Hom}_{ \mathcal L_n(i)}(M,M')$ and $g\in  \text{Hom}_{\mathcal V_0(j)}(N,N')$.
Similarly we define the functor $F$ as the functor
$$
F:\mathcal L_n(i)\boxtimes \mathcal V_0(j)\to 
\big(\mathcal L_n(i-1)\boxtimes  \mathcal V_0(j) \big) \boxplus 
\big( \mathcal L_n(i)\boxtimes \mathcal V_0(j-1) \big),
$$
given by 
$$
F(M\boxtimes N):= F(M)\boxtimes N + M\boxtimes F(N),\quad F(f\boxtimes g)= F(f)\boxtimes g + f\boxtimes F(g).
$$
Note that at the level of objects and morphisms, we will write $+$ to mean the \textit{noncommutative} sum.

First, we need to check that $E$ is a functor.   To this end, note that if $h\in \text{Hom}_{\mathcal L_n(i)}(M,M')$ and $h'\in \text{Hom}_{\mathcal L_n(k)}(M,M')$, $k\neq i$, then $h\circ h'=0=h'\circ h$.  
Hence,  
\begin{align}
E(f &\boxtimes g)\circ E(h\boxtimes k) = \big(E(f)\boxtimes g + f\boxtimes E(g) \big) \circ \big( E(h)\boxtimes k + h\boxtimes E(k) \big) \\
&=E(f)\circ E(h)\boxtimes g\circ k+  f\circ E(h)\boxtimes E(g)\circ k  + E(f)\circ h\boxtimes g\circ E(k) + f\circ h\boxtimes E(g)\circ E(k) \notag\\
&= E(f\circ h)\boxtimes g\circ k + f\circ h\boxtimes E(g\circ k) 
= E \big( f\circ h\boxtimes g\circ k \big)
= E \big( (f\boxtimes g)\circ(h\boxtimes k) \big), \notag
\end{align}
and moreover, 
\begin{align}
E(1_{M\boxtimes N})&=E(1_M\boxtimes 1_N)= E(1_M)\boxtimes 1_N + 1_M\boxtimes E(1_N)  \\
&=1_{E(M)}\boxtimes 1_N + 1_M\boxtimes 1_{E(N)} =1_{E(M)\boxtimes N + M\boxtimes E(N)}=1_{E(M\boxtimes N)}.\notag
\end{align}

Similarly for $F$, we have 
\begin{align}
F(f &\boxtimes g)\circ F(h\boxtimes k) 
= \big( F(f)\boxtimes g + f\boxtimes F(g) \big) \circ 
\big(F(h)\boxtimes k+ h\boxtimes F(k) \big) \\
&=F(f)\circ F(h)\boxtimes g\circ k+  f\circ F(h)\boxtimes F(g)\circ k + F(f)\circ h\boxtimes g\circ F(k) + f\circ h\boxtimes F(g)\circ F(k)\notag\\
&=F(f\circ h)\boxtimes g\circ k + f\circ h\boxtimes F(g\circ k) 
= F \big( f\circ h\boxtimes g\circ k \big) = F \big( (f\boxtimes g)\circ(h\boxtimes k) \big)  \notag
\end{align}
and 
\begin{align}
F(1_{M\boxtimes N})& = F(1_M\boxtimes 1_N) = F(1_M)\boxtimes 1_N+ 1_M\boxtimes F(1_N)  \\
&=1_{F(M)}\boxtimes 1_N+ 1_M\boxtimes 1_{F(N)} =1_{F(M)\boxtimes N + M\boxtimes F(N)}=1_{F(M\boxtimes N)}.\notag
\end{align}

\subsection{Natural Transformations $X$ and $T$}
In this subsection, we incorporate the affine Hecke algebra structure as natural transformations in the endomorphism category of functors $F$ and $F^2$ defined on the category $\mathcal L_n\boxtimes \mathcal V_0$ using the structure of how the affine Hecke algebra acts on each factor of the tensor product as natural transformations of $F$ and $F^2$.  

We first consider the case of the affine Hecke algebra acting as linear transformations in $\text{End}(F)$ for the two factor categories $ \mathcal L_n(i)$ and $\mathcal V_0(j)$. So suppose we have modules $M\in \mathcal L_n(i)$ and $N\in \mathcal V_0(j)$, where $0\leq i\leq n$ and $j\geq 0$.   Then we obtain natural transformations $X_i \in \text{End}(F|_{  \mathcal L_n(i)})$ and $X_j\in \text{End}(F|_{ \mathcal V_0(j)})$ such that $X_iX_j=X_jX_i$, and given any morphisms $f\in \text{Hom}_{\mathcal L_n(i)}(M,M')$ and $g\in \text{Hom}_{\mathcal V_0(j)}(N,N')$, 
the digrams    
\begin{equation}
\begin{CD}
F(M) @>F(f)>> F(M')  \\
@V X_{i-1,F(M)}  VV @V  VX_{i-1,F(M')} V  \\
F(M) @>F(f)>> F(M')  \\
\end{CD}
\hskip 100pt \begin{CD}
F(N) @>F(g)>> F(N')  \\
@V X_{j-1,F(N)}  VV @V   V X_{j-1,F(N')}V   \\
F(N) @>F(g)>> F(N')  \\
\end{CD}
\end{equation}
are commutative. 
We now want to define $X_{i,j}\in \text{End}_{(\mathcal L_n(i-1)\boxtimes \mathcal V_0(j))\boxplus (\mathcal L_n(i)\boxtimes \mathcal V_0(j-1)) }(F)$  
and then set $X=\sum_{i,j}X_{i,j}$ so that   
\begin{equation} 
\begin{CD} 
F(M\boxtimes N) =F(M)\boxtimes N + M\boxtimes F(N) @>F(f\boxtimes g)>> F(M'\boxtimes N') =F(M')\boxtimes N '+ M'\boxtimes F(N') \\
@V X_{F(M\boxtimes N)}  VV @V X_{ F(M'\boxtimes N')}  VV  \\
F(M\boxtimes N) =F(M)\boxtimes N + M\boxtimes F(N) @>F(f\boxtimes g)>> F(M'\boxtimes N') =F(M')\boxtimes N ' +  M'\boxtimes F(N') \end{CD}\label{Xdiagram}
\end{equation}
is a commutative diagram.
We define natural transformations $X_{F(M\boxtimes N)}$ by
\begin{equation}
X_{F(M\boxtimes N)} := \big(X_{i-1,F(M)}  \boxtimes  1_N\big)  \boxplus  \big(1_M \boxtimes X_{j-1,F(N)}\big)
\end{equation}
for $M\in \mathcal L_n(i)$ and $N\in \mathcal V_0(j)$.

Now 
{  \begin{align*}
X_{F(M'\boxtimes N')}F(f\boxtimes g)& = X_{i-1,F(M')}F(f) \boxtimes g  +  f\boxtimes X_{j-1,F(N')} F(g)  \\
&=  F(f)  X_{i-1,F(M)}\boxtimes g +  f\boxtimes F(g)X_{j-1,F(N)} \\
&=F(f\boxtimes g)X_{F(M\boxtimes N)}.
\end{align*}}
Thus \eqnref{Xdiagram} holds for $X$ on objects $F(M\boxtimes N)$ and $F(M'\boxtimes N')$.
\color{black}

Next we want to define $T\in \text{End}(F^2)$, which would mean that the diagram  
{\footnotesize 
\begin{equation}
\begin{CD}
F^2(M)\boxtimes N + (F(M)\boxtimes F(N))^{\boxplus 2}+ M\boxtimes F^2(N) @>F^2(f\boxtimes g)>>  F^2(M')\boxtimes N ' + (F(M')\boxtimes F(N'))^{\boxplus2} + M'\boxtimes F^2(N')\\
@V T_{F^2(M\boxtimes N)}   VV @V T_{F^2(M'\boxtimes N')}  VV  \\
F^2(M)\boxtimes N + (F(M)\boxtimes F(N))^{\boxplus 2} + M\boxtimes F^2(N) @>F^2(f\boxtimes g)>>  F^2(M')\boxtimes N ' + (F(M')\boxtimes F(N'))^{\boxplus2} + M'\boxtimes F^2(N').
\end{CD}\label{Tdiagram}
\end{equation}}
commutes for $M\in \mathcal L_n(i)$ and $N\in \mathcal V_0(j)$.  
First we write  
\[  
\begin{aligned}
F^2(M\boxtimes N) &= F^{(2)} F^{(1)}(M)\boxtimes N +
\left( F^{(2)}(M) \boxtimes F^{(1)}(N) +
F^{(1)}(M) \boxtimes F^{(2)}(N)  \right) \\ 
&+
M \boxtimes F^{(2)} F^{(1)} (N) 
\end{aligned} 
\]   
where the first application of $F$  on the exterior product is denoted by $(F^{(1)}\boxtimes 1) \boxplus (1\boxtimes F^{(1)})$  
and the second application is denoted by $(F^{(2)}\boxtimes 1) \boxplus (1\boxtimes F^{(2)})$.
We define
\begin{equation}
T_{F^2(M \boxtimes N)} := T_{i-2, F^2(M)}  \boxtimes 1_N + q^{1/2}\tau + 1_M \boxtimes T_{j-2, F^{2}(N)}.
\end{equation}
where 
\begin{equation}
\tau:(F(M)\boxtimes F(N))^{\boxplus 2}\to (F(M)\boxtimes F(N))^{\boxplus 2}
\end{equation}
is just the involution $\tau(a,b)=(b,a)$. More precisely, 
\[  
\begin{aligned}
T_{F^2(M \boxtimes N)}:& 
\left(F^{(2)} F^{(1)}(M)\boxtimes N \right) \boxplus 
\left( F^{(2)}(M) \boxtimes F^{(1)}(N) \right)\boxplus
\left(F^{(1)}(M) \boxtimes F^{(2)}(N) \right) \boxplus 
\left(M \boxtimes F^{(2)} F^{(1)} (N) \right)
\\ 
\longrightarrow & 
\left(F^{(2)} F^{(1)}(M)\boxtimes N  \right) \boxplus  
\left( F^{(2)}(M) \boxtimes F^{(1)}(N) \right) \boxplus 
\left(F^{(1)}(M) \boxtimes F^{(2)}(N) \right)  \boxplus 
\left(M \boxtimes F^{(2)} F^{(1)} (N)\right), 
\end{aligned} 
\]   
where 
\begin{align*}
T_{F^2(M \boxtimes N)}&
\big(
\left(m''\boxtimes n \right) \boxplus 
\left(m_2'\boxtimes n_1' \right)\boxplus 
\left(m_1'\boxtimes n_2' \right) \boxplus 
\left(m\boxtimes n'' \right)   \big) \\ 
&= 
\left(T_{i-2, F^2(M)}(m'') \boxtimes n  \right) \boxplus 
\left(q^{\frac{1}{2}} m_1'\boxtimes n_2' \right) \boxplus 
\left(q^{\frac{1}{2}}m_2'\boxtimes n_1' \right) \boxplus 
\big(m \boxtimes T_{j-2, F^2(N)}(n'')\big). 
\end{align*}
Then
{\footnotesize \begin{align*}
&T_{F^2(M'\boxtimes N')}\circ F^2(f\boxtimes g)
=T_{F^2(M'\boxtimes N')}\left(F^{(2)}F^{(1)}(f)\boxtimes g + F^{(2)}(f)\boxtimes F^{(1)}(g) + F^{(1)}(f) \boxtimes F^{(2)}(g) + f\boxtimes F^{(2)}F^{(1)}(g) \right) \\
&=T_{i-2, F^2(M')}(F^{(2)}F^{(1)}(f))\boxtimes g   
+ q^{\frac{1}{2}}F^{(1)}(f)\boxtimes F^{(2)}(g) + q^{\frac{1}{2}}F^{(2)}(f) \boxtimes F^{(1)}(g) 
+ f\boxtimes T_{j-2, F^2(N')}F^{(2)}F^{(1)}(g) \\ 
&= F^{(2)}F^{(1)}(f)\circ T_{i-2,F^2(M)} \boxtimes g   
+ q^{\frac{1}{2}}F^{(1)}(f)\boxtimes F^{(2)}(g) + q^{\frac{1}{2}}F^{(2)}(f) \boxtimes F^{(1)}(g) 
+ f\boxtimes F^{(2)}F^{(1)}(g)\circ T_{j-2,F^2(N)} \\ 
&=\left( F^{(2)}F^{(1)}(f) \boxtimes g 
+ F^{(2)}(f)\boxtimes F^{(1)}(g) + F^{(1)}(f) \boxtimes F^{(2)}(g) 
+ f\boxtimes F^{(2)}F^{(1)}(g)\right) \circ T_{F^{2}(M\boxtimes N)}\\ 
&=F^2(f \boxtimes g )\circ T_{F^{2}(M\boxtimes N)}.
\end{align*}}
Thus \eqnref{Tdiagram} holds for $T$ when restricted to modules $F^2(M\boxtimes N)$ and $F^2(M'\boxtimes N')$, and thus $T\in \text{End}(F^2)$.

 \begin{theorem}  Natural transformations $X$ and $T$ defined on the quotient $\mathcal L_n\boxtimes \mathcal V_0/R$ of the exterior product of categories satisfy \eqnref{Hecke1} through \eqnref{Hecke3}. 
 \end{theorem}
\begin{proof}
  Next we need to check that the relation 
\begin{equation}
(1_FT)\circ (T1_F)\circ (1_FT)=(T1_F)\circ (1_FT)\circ (T1_F)\label{braidrelation}
\end{equation}
holds in $\text{End}(F^3)$. 
Using  
\[ 
\begin{aligned} 
F^3 (M\boxtimes N) &= 
 \left( F^{(3)}F^{(2)}F^{(1)} M\boxtimes N  \right) \\ 
&\hspace{4mm} \boxplus \left( F^{(3)}F^{(2)} M\boxtimes F^{(1)}N  \right)
\boxplus \left( F^{(2)}F^{(1)} M\boxtimes F^{(3)} N  \right) 
\boxplus \left( F^{(3)}F^{(1)} M\boxtimes F^{(2)} N  \right) \\ 
&\hspace{4mm} 
\boxplus  \left( F^{(3)}M\boxtimes F^{(2)}F^{(1)} N \right) 
\boxplus  \left( F^{(2)}M\boxtimes F^{(3)}F^{(1)} N \right)   
\boxplus \left(  F^{(1)}M\boxtimes F^{(3)}F^{(2)} N \right)   \\ 
&\hspace{4mm} \boxplus \left(  M\boxtimes F^{(3)}F^{(2)}F^{(1)} N \right),  \\ 
\end{aligned}
\] 
we compute 
\[ 
\begin{aligned} 
(\mathbf{1}_F T)&\circ (T\mathbf{1}_F)\circ (\mathbf{1}_F T)
\Big(
m'''\boxtimes n + 
m_{32}''\boxtimes n_1' + m_{21}''\boxtimes n_3' + m_{31}''\boxtimes n_2'  
+ m_3'\boxtimes n_{21}''  \\
&\hspace{4mm} + m_2'\boxtimes n_{31}'' +  m_1'\boxtimes n_{32}'' 
+ m\boxtimes n''' \Big) \\ 
&= 
\mathbf{1}_F T  \circ  T\mathbf{1}_F 
\left( \mathbf{1}_F T(m''')\boxtimes n 
+ 
q^{\frac{1}{2}}m_{31}''\boxtimes n_2' +  T (m_{21}'')\boxtimes n_3' + q^{\frac{1}{2}} m_{32}''\boxtimes n_1' \right. \\ 
&\left.\hspace{4mm} + m_3'\boxtimes T (n_{21}'') + q^{\frac{1}{2}}m_1'\boxtimes n_{32}'' + q^{\frac{1}{2}} m_2'\boxtimes n_{31}''  
+ m\boxtimes \mathbf{1}_F T(n''')
\right) \\ 
&= 
(\mathbf{1}_F T) 
\left(  (T\mathbf{1}_F \circ \mathbf{1}_F T)  (m''')\boxtimes n 
+  
q^{\frac{1}{2}} T(m_{31}'')\boxtimes n_2' +  q m_{32}''\boxtimes n_1' + q^{\frac{1}{2}}T (m_{21}'')\boxtimes n_3'  \right. \\ 
&\left. \hspace{4mm}+ q m_1'\boxtimes n_{32}'' + q^{\frac{1}{2}} m_3'\boxtimes T (n_{21}'') + q^{\frac{1}{2}} m_2'\boxtimes T(n_{31}'')  
+  m\boxtimes (T\mathbf{1}_F  \circ \mathbf{1}_F T )  (n''')
 \right) \\  
&=  
\left( \mathbf{1}_F T\circ T\mathbf{1}_F \circ \mathbf{1}_F T \right)(m''')\boxtimes n 
+ 
q T (m_{21}'')\boxtimes n_3' +  q T(m_{32}'')\boxtimes n_1' + q T(m_{31}'')\boxtimes n_2'    \\ 
&\hspace{4mm}
+ q m_1'\boxtimes T(n_{32}'') + q  m_2'\boxtimes T(n_{31}'') + q  m_3'\boxtimes T (n_{21}'')   
+  m\boxtimes \left( \mathbf{1}_F T\circ T\mathbf{1}_F \circ \mathbf{1}_F T \right)(n''')
 \\   
\end{aligned}
\] 
and 
\[ 
\begin{aligned}
(T \mathbf{1}_F) &\circ (\mathbf{1}_F T)\circ (T \mathbf{1}_F)
\Big(m'''\boxtimes n +  
m_{32}''\boxtimes n_1' + m_{21}''\boxtimes n_3' + m_{31}''\boxtimes n_2' 
+ m_3'\boxtimes n_{21}''  \\ 
&\hspace{4mm} + m_2'\boxtimes n_{31}'' + m_1'\boxtimes n_{32}'' 
+ m\boxtimes n''' \Big) \\ 
&= 
T \mathbf{1}_F \circ \mathbf{1}_F T 
\left(T \mathbf{1}_F(m''')\boxtimes n  +  
T(m_{32}'')\boxtimes n_1' + q^{\frac{1}{2}}m_{31}''\boxtimes n_2' +  q^{\frac{1}{2}} m_{21}''\boxtimes n_3'  \right. \\ 
&\left. \hspace{4mm} +  q^{\frac{1}{2}}m_2'\boxtimes n_{31}'' +  q^{\frac{1}{2}}m_3'\boxtimes n_{21}'' + m_1'\boxtimes T(n_{32}'') 
+ m\boxtimes T \mathbf{1}_F(n''') \right)  \\ 
&= 
(T \mathbf{1}_F)  
\left( (\mathbf{1}_F T\circ T \mathbf{1}_F) (m''')\boxtimes n + 
q  m_{21}''\boxtimes n_3' + q^{\frac{1}{2}}T(m_{31}'')\boxtimes n_2' + q^{\frac{1}{2}} T(m_{32}'')\boxtimes n_1' \right. \\  
&\left. \hspace{4mm} + q^{\frac{1}{2}}m_2'\boxtimes T(n_{31}'') + q^{\frac{1}{2}} m_1'\boxtimes T(n_{32}'') + q m_3'\boxtimes n_{21}''   
+ m\boxtimes (\mathbf{1}_F T\circ T \mathbf{1}_F) (n''') \right)  \\    
&= 
\big(T \mathbf{1}_F\circ \mathbf{1}_F T\circ T \mathbf{1}_F \big) (m''')\boxtimes n +   
q  T(m_{21}'')\boxtimes n_3' + q  T(m_{32}'')\boxtimes n_1' + q T(m_{31}'')\boxtimes n_2'  \\   
&\hspace{4mm} + q  m_1'\boxtimes T(n_{32}'') + q m_2'\boxtimes T(n_{31}'') + q m_3'\boxtimes T(n_{21}'')   
+ m\boxtimes \big( T \mathbf{1}_F\circ \mathbf{1}_F T\circ T \mathbf{1}_F \big) (n'''),   \\    
\end{aligned}   
\]    
which proves \eqnref{braidrelation}.
 
 For $q\not=1$, we will check that $T$ satisfies 
 \begin{equation}
 T\circ (\mathbf{1}_F X)\circ T= qX\mathbf{1}_F. \label{TXmix}
 \end{equation} 
We have 
\[ 
\begin{aligned}
T\circ (\mathbf{1}_F X) &\circ T\big(m''\boxtimes n + m_2'\boxtimes n_1' + m_1'\boxtimes n_2' + m\boxtimes n'' \big)  \\
&=  T\circ (\mathbf{1}_F X) \left(T(m'') \boxtimes n 
+ q^{\frac{1}{2}} m_1'\boxtimes n_2' + q^{\frac{1}{2}}m_2'\boxtimes n_1'  + m \boxtimes T(n'')\right)  \\ 
&= T\left(\mathbf{1}_F X\circ T(m'') \boxtimes n + 
q^{\frac{1}{2}} m_1'\boxtimes X(n_2') + q^{\frac{1}{2}} X(m_2') \boxtimes n_1'  
+ m \boxtimes \mathbf{1}_F X\circ  T(n'') \right)   \\ 
&= \big(T\circ (\mathbf{1}_F X)\circ T \big)(m'') \boxtimes n 
+ q\big( X(m_2') \boxtimes n_1' + m_1'\boxtimes X(n_2') \big) 
+ m \boxtimes \big(T\circ (\mathbf{1}_F X)\circ T \big)(n'') \\ 
&= q X\mathbf{1}_F (m'') \boxtimes n + q X\mathbf{1}_F 
\big( m_2' \boxtimes n_1' + m_1'\boxtimes n_2' \big) + m \boxtimes q X\mathbf{1}_F (n'') \\
&=qX\mathbf 1_F \big(m''\boxtimes n + m_2'\boxtimes n_1' + m_1'\boxtimes n_2' + m\boxtimes n'' \big).
\end{aligned}
\] 
Thus, $T$ satisfies the relation above. 
Secondly, we will check that $T$ satisfies $T\circ (\mathbf{1}_F X)\circ T=X\mathbf{1}_F - T$ in the degenerate setting. On one hand, we have:  
\[
\begin{aligned}  
T\circ &(\mathbf{1}_F X) \circ T 
\big( m''\boxtimes n + m_2'\boxtimes n_1' + m_1'\boxtimes n_2' + m\boxtimes n'' \big) \\ 
&= T\circ (\mathbf{1}_F(1- (1-q)\overline{X}))\circ T\Big(m''\boxtimes n + m_2'\boxtimes n_1' + m_1'\boxtimes n_2' + m\boxtimes n'' \Big)  \\ 
&= (T\circ T) (m'')\boxtimes n + q \big(m_2'\boxtimes n_1' + m_1'\boxtimes n_2' \big) + m\boxtimes (T\circ T)(n'')  \\ 
&\hspace{4mm}- (1-q)T \circ \mathbf{1}_F \overline{X} \Big( T(m'')\boxtimes n + q^{\frac{1}{2}} \big(m_1'\boxtimes n_2' + m_2'\boxtimes n_1' \big) + m\boxtimes T(n'') \Big) \\  
&=  \big(q\mathbf{1}_{F^2}-(1-q)T \big) (m'')\boxtimes n 
+ q \big(m_2'\boxtimes n_1' + m_1'\boxtimes n_2' \big) + m\boxtimes 
\big( q\mathbf{1}_{F^2}-(1-q)T \big)(n'')  \\ 
&\hspace{4mm} - (1-q)T \Big( \mathbf{1}_F\overline{X}\circ T(m'')\boxtimes n 
+ q^{\frac{1}{2}}(m_1'\boxtimes \overline{X} (n_2') + \overline{X}(m_2')\boxtimes n_1')+m\boxtimes \mathbf{1}_F \overline{X}\circ T(n'') \Big) \\ 
&=  \big( q\mathbf{1}_{F^2}-(1-q)T \big) (m'')\boxtimes n + q \big(m_2'\boxtimes n_1' + m_1'\boxtimes n_2' \big) + m\boxtimes \big( q\mathbf{1}_{F^2}-(1-q)T \big)(n'')  \\ 
&\hspace{4mm}- (1-q) \Big((T\circ \mathbf{1}_F\overline{X}\circ T)(m'')\boxtimes n  
+ q (\overline{X}(m_2')\boxtimes n_1' + m_1'\boxtimes \overline{X} (n_2')) + m\boxtimes (T\circ \mathbf{1}_F\overline{X}\circ T)(n'') \Big), \\ 
\end{aligned} 
\] 
and on the other hand, we have 
\[ 
\begin{aligned}
q X\mathbf{1}_F &\big(m''\boxtimes n + m_2'\boxtimes n_1' + m_1'\boxtimes n_2'  + m\boxtimes n'' \big) \\ 
&= q\Big( (X\mathbf{1}_F) (m'')\boxtimes n + X(m_2')\boxtimes n_1' + m_1'\boxtimes X(n_2') + m\boxtimes (X\mathbf{1}_F)(n'') \Big) \\ 
&= q\Big( \big(\mathbf{1}_{F^2} - (1-q)\overline{X}\mathbf{1}_F\big) (m'')\boxtimes n 
+ \big(1- (1-q)\overline{X} \big)(m_2')\boxtimes n_1' \\ 
&\hspace{4mm}+ m_1'\boxtimes \big(1- (1-q)\overline{X}\big)(n_2')   + m\boxtimes 
\big(\mathbf{1}_{F^2} - (1-q)\overline{X}\mathbf{1}_F \big)(n'')\Big) \\ 
&= q \left(\mathbf{1}_{F^2} - (1-q)\overline{X}\mathbf{1}_F \right) (m'')\boxtimes n 
+ q \left(
m_2' \boxtimes n_1' + m_1'\boxtimes n_2' 
\right)  \\ 
&\hspace{4mm} - q (1-q) \big(\overline{X}(m_2')\boxtimes n_1' + m_1'\boxtimes  \overline{X}(n_2')  \big)  +  
q m\boxtimes \big(\mathbf{1}_{F^2} - (1-q)\overline{X}\mathbf{1}_F \big)(n'') \\  
\end{aligned}  
\] 
since $X= 1- (1-q)\overline{X}$ and since $T\circ (\mathbf{1}_F X)\circ T= qX\mathbf{1}_F$ in the nondegenerate setting. 
Setting the last two equations equal to each other, we obtain   
\[ 
\begin{aligned}
(\cancel{q\mathbf{1}_{F^2}} &- (1-q)T) (m'')\boxtimes n + \cancel{q (m_2'\boxtimes n_1' + m_1'\boxtimes n_2')} + m\boxtimes (\cancel{q\mathbf{1}_{F^2}} - (1-q)T)(n'')  \\ 
&\hspace{4mm}- (1-q) ((T\circ \mathbf{1}_F\overline{X}\circ T)(m'')\boxtimes n  
+ \cancel{q (\overline{X}(m_2')\boxtimes n_1' + m_1'\boxtimes \overline{X} (n_2'))} 
+ m\boxtimes (T\circ \mathbf{1}_F \overline{X}\circ T)(n'') )   \\  
&= q (\cancel{\mathbf{1}_{F^2}}  - (1-q)\overline{X}\mathbf{1}_F) (m'')\boxtimes n 
+ \cancel{q (m_2' \boxtimes n_1' + m_1'\boxtimes n_2' )}   \\ 
&\hspace{4mm} - \cancel{q (1-q) (\overline{X}(m_2')\boxtimes n_1' 
+   m_1'\boxtimes  \overline{X}(n_2')) } +  
q m\boxtimes (\cancel{\mathbf{1}_{F^2}}  - (1-q)\overline{X}\mathbf{1}_F)(n''), \\  
\end{aligned} 
\] 
where terms appearing on both sides have been cancelled. 
Above equality simplifies as: 
 \[ 
\begin{aligned}
- \cancel{(1-q)} T (m'') &\boxtimes n     - \cancel{(1-q)}  (T\circ \mathbf{1}_F \overline{X}\circ T) (m'')\boxtimes n   
 - \cancel{(1-q)} m\boxtimes T(n'')  \\ 
 &\hspace{4mm}  - \cancel{(1-q)} m\boxtimes (T\circ \mathbf{1}_F \overline{X}\circ T)(n'')    \\  
 &=  - q \cancel{(1-q)} \overline{X}\mathbf{1}_F (m'')\boxtimes n  
  - q \cancel{(1-q)} m\boxtimes (   \overline{X}\mathbf{1}_F )(n''), \\  
\end{aligned} 
\] 
or 
\[ 
\begin{aligned}  
& T (m'')\boxtimes n  + (T\circ \mathbf{1}_F \overline{X}\circ T) (m'')\boxtimes n + m\boxtimes T(n'')  + m\boxtimes (T\circ \mathbf{1}_F \overline{X}\circ T)(n'')    \\  
 &\hspace{4mm}= q  \overline{X}\mathbf{1}_F (m'')\boxtimes n    + q  m\boxtimes (   \overline{X}\mathbf{1}_F)(n''). \\   
\end{aligned}
\] 
Shuffling a few terms, we obtain 
\[ 
\begin{aligned}  
(T\circ \mathbf{1}_F \overline{X}\circ T)&(m''\boxtimes n + m\boxtimes n'')  
= (T\circ \mathbf{1}_F \overline{X}\circ T) (m'')\boxtimes n + m\boxtimes (T\circ \mathbf{1}_F \overline{X}\circ T)(n'')    \\  
&= q  \overline{X}\mathbf{1}_F (m'')\boxtimes n -T (m'')\boxtimes n   + q  m\boxtimes (   \overline{X}\mathbf{1}_F)(n'')- m\boxtimes T(n'') \\   
&= (q\overline{X}\mathbf{1}_F - T)(m'')\boxtimes n + m\boxtimes (q\overline{X}\mathbf{1}_F - T )(n'') \\ 
&= (q\overline{X}\mathbf{1}_F - T)(m''\boxtimes n + m\boxtimes n''). \\  
\end{aligned} 
\] 
Letting $q\rightarrow 1$, we obtain $T\circ (\mathbf{1}_F \overline{X})\circ T= \overline{X} \mathbf{1}_F - T$ in the degenerate setting.

Thirdly, we will check $(T+\mathbf{1}_{F^2})\circ (T-q\mathbf{1}_{F^2})=0$: 

\[ 
\begin{aligned}
(T\circ T &+ (1-q)T - q\mathbf{1}_{F^2})
\Big(m''\boxtimes n + m_2'\boxtimes n_1' + m_1'\boxtimes n_2'  + m\boxtimes n'' \Big) \\ 
&=  
(T\circ T )(m'')\boxtimes n + q \big(m_2'\boxtimes n_1' + m_1'\boxtimes n_2' \big) + m\boxtimes (T\circ T)(n'') \\ 
&\hspace{4mm}+ (1-q) T(m'')\boxtimes n + q^{\frac{1}{2}}(1-q)\big(m_1'\boxtimes n_2' + m_2'\boxtimes n_1' \big) + (1-q) m\boxtimes T(n'') \\ 
&\hspace{4mm}- qm''\boxtimes n - q \big(m_2'\boxtimes n_1' + m_1'\boxtimes n_2'\big) - q m\boxtimes n''  \\ 
&= \big(T\circ T + (1-q)T-q\mathbf{1}_{F^2} \big)(m'')\boxtimes n  
+ q^{\frac{1}{2}}(1-q)\big(m_1'\boxtimes n_2' + m_2'\boxtimes n_1' \big)  \\  
&\hspace{4mm}+m\boxtimes \big(T\circ T + (1-q)T-q\mathbf{1}_{F^2} \big)(n'')  \\ 
&= \big(T\circ T + (1-q)T-q\mathbf{1}_{F^2}\big)(m'')\boxtimes n + m\boxtimes \big(T\circ T + (1-q)T-q\mathbf{1}_{F^2} \big)(n''),
\end{aligned}   
\]  
 where the last equality holds in the quotient category $\mathcal{L}_n\boxtimes \mathcal{V}_0/R$.  
 \end{proof}

Next, let $N$ and $S$ be functors of the form $\mathcal{V}_0(i):= (Q_+\otimes -)^i$ in $\mathcal{V}_0$, where $i\geq 0$,  which are viewed as objects in $\mathcal{V}_0$. 
Consider $A=M\otimes N$ and $B=R\otimes S$.  

\begin{definition}\label{definition:equiv-relation-quotient-cat}
Define 
\begin{equation}\label{eq:equivalence-relation} 
R_{A',B'} := 
\begin{cases} 
\Hom(A',B') 		&\mbox{ if } A'= (q-1)F(M)\otimes F(N) \mbox{ or if } B' = (q-1)F(R)\otimes F(S), \\ 
\hspace{8mm} 0 &\mbox{ if } A' \not=(q-1)F(M)\otimes F(N) \mbox{ and } B' \not= (q-1)F(R)\otimes F(S).  \\ 
\end{cases}
\end{equation}
\end{definition}
It is clear that $R_{A',B'}\subseteq \Mor(\mathcal{L}_n\otimes \mathcal{V}_0)$ is an equivalence relation. 
We will now check the condition in Section~\ref{subsection:quotient-categories}. Suppose 
$\phi_i\in \text{Hom}(A',B')$ are congruent and 
$\psi_i\in \text{Hom}(B',C')$ are congruent, where $i=1, 2$.
Since it is clear that  $\psi_1\phi_1,\psi_1\phi_2,\psi_2\phi_1,\psi_2\phi_1$ are in $R_{A',C'}$,  
we conclude that $\mathcal{L}_n\otimes \mathcal{V}_0/R$ is a quotient category.

\section{The Subcategories $\mathcal V_s$ and $\mathcal T_r$ in $\mathcal{L}_n\otimes \mathcal{V}_0$}\label{section:subcategories-Vs-Tr}  
The full subcategories $\mathcal V_s$ and $\mathcal T_r$, which are inside of $\mathcal L_n\otimes \mathcal V_0$, have objects that are sums of the submodules generated by applications of functors $E$ and $F$. Structures of these modules in $\mathcal{V}_s$ and $\mathcal{T}_r$, for $s\in I'''$ and $r\in I'$, respectively, are depicted in the following diagrams:  

\[
\xymatrix{
  {\overbrace{U_{s}}} \ar[d]\\
  F U_{s} \ar@/^/[u] \ar[d]\\
  {\vdots} \ar@/^/[u] \ar[d]\\
  F^s  U_s \ar@/^/[u] \ar[d]\\
 {\overbrace{F^{s+1}U_{s}}}\ar[d]\\
 F^{s+2}  U_{s} \ar@/^/[u] \ar[d]\\
  {\vdots} \ar@/^/[u] 
}
\hskip 50pt \xymatrix{
& {\overbrace{E^{r+1}A_{-r-2}}} \ar[d]\\
& F E^{r+1}A_{-r-2} \ar@/^/[u] \ar[d]\\
& {\vdots} \ar@/^/[u] \ar[d]\\
& F^r E^{r+1}A_{-r-2} \ar@/^/[u] \ar[d]\\
{A_{-r-2} }\ar@/^/[ur] \ar[d] & {\overbrace{F^{r+1}E^{r+1} A_{-r-2}}}\ar[d]\\
FA_{-r-2} \ar@/^/[ur] \ar@/^/[u] \ar[d] & F^{r+2} E^{r+1}A_{-r-2} \ar@/^/[u] \ar[d]\\
{\vdots} \ar@/^/[ur] \ar@/^/[u] & {\vdots} \ar@/^/[u] 
}
\]
where  $U_{-s-2}=\boxplus_{j=0}^{\frac{n+s}{2}} (M_j\boxtimes N_{ \frac{n+s+2}{2}-j })^{\boxplus p_j}\in \mathcal{L}_n\boxtimes \mathcal{V}_0$, where $p_j$ is given in Proposition~\ref{proposition:p-j-for-Verma-mod-cat}, and  
$A_{-r-2} = \boxplus_{j= 0}^{\frac{n+r}{2}} (M_j \boxtimes N_{\frac{n+r+2}{2}-j} )^{\boxplus q_j}$, where $q_j$ is given in Equation~\eqref{equation:recursion-equation-basis-coeff-betas}, respectively.  The upward arrows indicate applications of the functor $E$ and the downward arrows are applications of the functor $F$.  
The equivalence classes of modules in the diagram on the left and on the right correspond to vectors in modules $V_s$ and $T_r$, respectively.

\subsection{Adjointness of $E$ and $F$}\label{subsection:adjointness-EF} 
Set $E_+=E$ and $E_-=F$.  Recall in the setting of Chuang-Rouquier that a morphism of weak $\mathfrak{sl}_2$-categories from $\mathcal A'$ to $\mathcal A'$ consists of a functor $R:\mathcal A'\to \mathcal A$ together with two isomorphisms of functors $\zeta_\pm :RE_\pm '\to E_\pm R$ such that the following diagrams commute: 
$$
\begin{CD}R F' @>\zeta_->> FR \\
@V\eta RF' VV @A F R\epsilon' AA  \\
FER F' @>F\zeta_+^{-1}F'>> FRE'F'.
\end{CD}
$$
In the definition of a morphism of $\mathfrak{sl}_2$-categories of finite-dimensional modules, $(E,F)$ or $(F,E)$ forms an adjoint pair.  
In our setting, $E$ has no left adjoint and $F$ has no right adjoint.  

If  $\mathcal A'$ to $\mathcal A'$ are  $\mathfrak{sl}_2$-categories, then a morphism of $\mathfrak{sl}_2$-categories from $\mathcal A'$ to $\mathcal A'$ consists of a weak $\mathfrak{sl}_2$-morphism $(R,\zeta_+,\zeta_-)$ such that $a'=a$, $q=q'$ and the following diagrams are commutative
$$
\begin{CD}R E' @>\zeta_+>> ER \\
@V RX' VV @V X R VV  \\
R E' @>\zeta_+>> ER\\
\end{CD},
\hskip 100pt 
\begin{CD}
R E' E'@>\zeta_+E'>>  ERE' @>E\zeta_+ >> EER \\
@V RT' VV @. @V T R VV  \\
R E' E'@>\zeta_+E'>>  ERE' @>E\zeta_+ >> EER \\
\end{CD}
$$

We want the definition of $X$ and $T$ in $\text{End}_{\mathcal L_n(i)\otimes \mathcal V_0(j)}(F)$ and  $\text{End}_{\mathcal L_n(i)\otimes \mathcal V_0(j)}(F^2)$, respectively, such that there exists an $\mathfrak{sl}_2$-category morphism $(R,\zeta_+,\zeta_-): \mathcal T_r\to \mathcal L_n\otimes \mathcal V_0$.


Use the comultiplication to give the action of $X\in \End(F)$ and $T\in \End(F^2)$ as natural transformations on the tensor product category, and as a consequence, on the direct summand categories of $\mathcal{T}_r$ and $\mathcal{V}_s$.

Consider Jucys-Murphy basis elements to define the coproduct, and Gelfand-Tsetlin bases elements for the general $SL_n$-setting.

\section{Future work} 
We list the following list of problems as a part of our future work. 
\begin{enumerate}[(i).]
\item 
For a positive integer $n$, one has $V_n/V_{-n-2}\cong L_n$ as $\mathfrak{sl}_2$-modules. It follows that the short exact sequence 
$$ \begin{CD}
0 @>>> V_{-n-2}  @>>> V_n  @>>> L_n @>>> 0.
 \end{CD}
 $$
of modules exists, which is called the Bernstein-Gelfand-Gelfand (BGG) resolution of the $n+1$-dimensional module $L_n$. 
We cannot use $\Omega-n(n+2)$ in the categorification construction because the functor sends all objects to the zero object, so this Casimir cannot be used to construct kernels and cokernels. Thus we need to find another module homomorphism $V_n\rightarrow L_n$. 
The canonical map for $V_n\rightarrow L_n$ is to send the highest weight vector in $V_n$ to the highest weight vector in $L_n$ such that for 
$f^i u_s \in V_s \subseteq L_n\boxtimes V_0$ and $F^iU_s \in \mathcal{V}_s \subseteq \mathcal{L}_n \boxtimes \mathcal{V}_0$ 
with $f^{s+j}u_s\mapsto F^{s+j}U_s$, and $f^{s+j}u_s\in V_{-s-2}$ for all $j\geq 1$. So our first question is stated as follows:  
\begin{center}
if categorification sends  $f^i u_s  \mapsto F^iU_s$, 
then find corresponding objects in  $L_s$  and $\mathcal{L}_s$ using \cite{MR2373155}. 
\end{center}
These leads us to the question of whether it makes sense to talk about a quotient category $\mathcal V_n/\mathcal V_{-n-2}$ and if so, does it have the structure of an $\mathfrak{sl}_2$-categorification that is naturally equivalent to the $\mathfrak{sl}_2$-categorification $\mathcal L_n$.  
\item As Enright in \cite{MR541329} shows that there is a short exact sequence 
$$ \begin{CD}
0 @>>>  V_r    @>>>   T_r  @>>>  V_{-r-2} @>>> 0 
 \end{CD}
 $$
of $\mathcal I_{\mathfrak{g}}(\mathfrak{sl}_2)[r(r+2)]$-modules, 
construct a short exact sequence 
$$ \begin{CD}
0 @>>> \mathcal V_r   @>>> \mathcal T_r  @>>> \mathcal  V_{-r-2}@>>> 0 
 \end{CD}
 $$
of $\mathcal I_{\mathfrak{g}}(\mathfrak{sl}_2)[r(r+2)]$-categorifications.   
Construct a functor $\mathcal{T}_r\rightarrow \mathcal{V}_{-r-2}$, and then 
$\mathcal{V}_r \cong \ker(\mathcal{T}_r\rightarrow \mathcal{V}_{-r-2})$. 
We believe that $\im (\Omega- r(r+2))\cong  \mathcal{V}_{-r-2}$. 
Since $\mathcal{T}_r$ is a monoidal, additive category, the notion of subtraction does not exist.  
 Enright in  \cite{MR541329} uses the decomposition in \eqnref{Enrightsresult} in an important way.  How much of Enright's paper can we categorify?
 \item   Let $m,n\in\mathbb N$.  Deligne in \cite{MR1106898}  introduces the notion of a tensor product of abelian categories and shows $\mathcal L_m\otimes \mathcal L_n$ exists and is an abelian category.  Then one could ask for an equivalence of $\mathfrak{sl}_2$-categorifications  
 \begin{equation}
 \mathcal L_m\otimes \mathcal L_n\cong \mathcal L_{m+n}\oplus \cdots \oplus \mathcal L_{|m+n|}.
 \end{equation} 
 \item Since we now have $\mathcal I_{\mathfrak{g}}(\mathfrak{sl}_2)$-categorifications of $V_s$ and $T_r$, does there exist a decomposition of categories whose decategorification leads to \eqnref{tensorproductdecomposition}: 
 \begin{align}   
L_n\otimes V_\lambda\cong \bigoplus_{r\in I'} T_{\lambda +r}\oplus\bigoplus_{s\in I'''} V_{\lambda+s}
\end{align}  
for $0\neq \lambda \in \mathbb Z$?
\item The category $\mathcal I_{\mathfrak{g}}(\mathfrak{sl}_2)$ is closed under tensoring by $L_n$, $T_r$ and $V_s$ and so it leads to the question of what are the tensor product decomposition theorems for the tensor product of two $\mathcal I_{\mathfrak{g}}(\mathfrak{sl}_2)$- categorifications, such as
 \begin{align} 
\mathcal L_n\boxtimes \mathcal T_\lambda,\quad  \mathcal T_{\lambda  }\boxtimes  \mathcal V_{\mu},\quad \mathcal V_{\lambda  }\boxtimes \mathcal V_{\mu},\quad  \mathcal T_\lambda\boxtimes  \mathcal T_\mu ?
\end{align}  
\item  How does one define the category $\text{Hom}(\mathcal T_n, \mathcal V_m)$, etc. and once one has defined this, can we prove results like adjoint associativity 
$\text{Hom}(\mathcal L_k \boxtimes \mathcal V_n, \mathcal V_m)\cong \text{Hom}(\mathcal V_n,\text{Hom}(\mathcal L_k, \mathcal V_m))$ or 
$\text{Hom}(\mathcal L_k, \mathcal L_0)\boxtimes   \mathcal V_m\cong \text{Hom} (\mathcal L_k,\mathcal V_m)$?

\item Since the category of finite dimensional $\mathfrak{sl}_2$-modules is an additive monoidal category, does the sum of the categorifications $\mathcal T_r$ and $\mathcal V_s$, $r\in\mathbb N$ and $s\in \mathbb Z$ form a module category in the sense of Viktor Ostrik (cf. \cite{MR1976459})? 
\item 
There are a number of tensor product decomposition theorems for $\mathfrak{sl}(2,\mathbb R)$-modules in \cite{MR1151617}.  Do these tensor product decomposition theorems categorify to interesting  $\mathfrak{sl}(2,\mathbb R)$-equivalence of categorifications?
\end{enumerate}

\end{document}